\numberwithin{equation}{section}
\definecolor{darkred}{rgb}{1,0,0}
\definecolor{darkgreen}{rgb}{0,0.8,0}
\definecolor{darkblue}{rgb}{0,0,1}
\newtheorem{thm}{Theorem}[section]
\newtheorem{lem}[thm]{Lemma}
\newtheorem{prop}[thm]{Proposition}
\newtheorem{maintheorem}{Theorem}
\theoremstyle{definition}
\newtheorem{rem}[thm]{Remark}
\newtheorem{exm}[thm]{Example}
\newtheorem{ass}[thm]{Assumption}
\DeclareMathOperator{\area}{\mathrm{area}}
\DeclareMathOperator{\lcm}{\mathrm{lcm}}
\DeclareMathOperator{\const}{\mathrm{const}}
\DeclareMathOperator{\Cal}{Cal}
\DeclareMathOperator{\Flux}{\mathrm{Flux}}
\newcommand{\Ham}{\mathrm{Ham}}
\newcommand{\std}{\mathrm{std}}
\newcommand{\vol}{\mathrm{vol}}
\newcommand{\fix}{\mathrm{fix}}
\newcommand{\pr}{\mathrm{pr}}
\newcommand{\N}{\mathds{N}}
\newcommand{\Z}{\mathds{Z}}
\newcommand{\R}{\mathds{R}}
\newcommand{\C}{\mathds{C}}
\newcommand{\UU}{\mathcal{U}}
\newcommand{\VV}{\mathcal{V}}
\newcommand{\eul}{e}
\newcommand{\diff}{d}
\newcommand{\id}{\mathrm{id}}
\newcommand{\Tan}{\mathrm{T}}
\begin{document}

\title[Higher systolic inequalities]{Higher systolic inequalities\\ for 3-dimensional contact manifolds}

\author[A. Abbondandolo]{Alberto Abbondandolo}
\address{Alberto Abbondandolo\newline\indent Ruhr Universit\"at Bochum, Fakult\"at f\"ur Mathematik\newline\indent Geb\"aude IB 3/65, D-44801 Bochum, Germany}
\email{alberto.abbondandolo@rub.de}

\author[C. Lange]{Christian Lange}
\address{Christian Lange\newline\indent Ludwig-Maximilians-Universit\"at M\"unchen, Mathematisches Institut\newline\indent Theresienstra{\ss}e 39, D-80333 Munich, Germany}
\email{lange@math.lmu.de}

\author[M. Mazzucchelli]{Marco Mazzucchelli}
\address{Marco Mazzucchelli\newline\indent CNRS, UMPA, \'Ecole Normale Sup\'erieure de Lyon\newline\indent 46 all\'ee d'Italie, 69364 Lyon, France}
\email{marco.mazzucchelli@ens-lyon.fr}

\date{July 25, 2021}

\subjclass[2010]{53D10}

\keywords{systolic inequalities, Besse contact forms, Seifert fibrations, Calabi homomorphism}

\begin{abstract}
A contact form is called Besse when the associated Reeb flow is periodic. We prove that Besse contact forms on closed connected 3-manifolds are the local maximizers of suitable higher systolic ratios. Our result extends earlier ones for Zoll contact forms, that is, contact forms whose Reeb flow defines a free circle action. 
\end{abstract}

\maketitle

\section{Introduction}
\label{s:introduction}

\subsection{Background and main result}
The aim of this paper is to prove some sharp inequalities involving the periods of closed orbits of Reeb flows on 3-manifolds and the contact volume.
Let $Y$ be a closed, connected, orientable 3-manifold. We recall that a one-form $\lambda$ on $Y$ is called a contact form when $\lambda\wedge d\lambda$ is nowhere vanishing. The contact form $\lambda$ induces a vector field $R_{\lambda}$, which is called Reeb vector field of $\lambda$, by the identities $R_{\lambda} \,\lrcorner\, \diff\lambda = 0$ and $R_{\lambda} \,\lrcorner\, \lambda=1$.
The flow of $R_{\lambda}$ is called the Reeb flow, and we will denote it by $\phi_{\lambda}^t$. It preserves the contact form $\lambda$, and in particular the volume form $\lambda\wedge d\lambda$. Reeb flows are also called contact flows in the literature. Reeb flows on 3-manifolds constitute a special class of volume preserving flows with the remarkable feature of always having closed orbits: the Weinstein conjectures postulates that Reeb flows on arbitrary closed contact manifolds admit closed orbits, and this conjecture has been confirmed in dimension 3 by Taubes, see \cite{Taubes:2007wi}.

We denote by $\tau_1(\lambda)$ the minimum of all periods of closed Reeb orbits and define the systolic ratio of $\lambda$ as the quotient
\begin{align}
\label{e:1st_systolic_ratio} 
\rho_1(\lambda) := \frac{\tau_1(\lambda)^2}{\vol(Y,\lambda)},
\end{align}
where the contact volume $\vol(Y,\lambda)$ is defined as the integral of the volume form $\lambda\wedge \diff\lambda$ over $Y$. The choice of the power 2 in the numerator of~\eqref{e:1st_systolic_ratio}  makes $\rho_1$ invariant under rescaling: $\rho_1(c\lambda) = \rho_1(\lambda)$ for every non-zero constant $c$. As observed in \cite[Lemma 2.1]{Croke:1994}, different contact forms on $Y$ inducing the same Reeb vector field  give the same contact volume. Therefore, the systolic ratio $\rho_1$ is a dynamical invariant of Reeb flows. It is actually invariant by smooth conjugacies and linear time rescalings.

The term ``systolic ratio'' is borrowed from metric geometry: the systolic ratio of a Riemannian metric on a closed surface is the ratio between the square of the length of the shortest closed geodesic and the Riemannian area. Geodesic flows are particular Reeb flows, and the metric systolic ratio coincides with $2\pi$-times the contact systolic ratio defined above. Indeed, the length of any closed geodesic agrees with its period as closed Reeb orbit, and the contact volume of the unit tangent bundle of a Riemannian surface is $2\pi$-times the Riemannian area. 

Still borrowing the terminology from Riemannian geometry, a contact form $\lambda$ on $Y$ is called Zoll if all its Reeb orbits are closed and have the same minimal period. In this case, the Reeb flow of $\lambda$ induces a free $S^1$-action on $Y$, and the systolic ratio of $\lambda$ has the value $-1/\eul$, where the negative integer $\eul$ is the Euler number of the $S^1$-bundle which is induced by this $S^1$-action. 

Zoll contact forms are precisely the local maximizers of the systolic ratio $\rho_1$ in the $C^3$-topology of contact forms: this was proven for arbitrarily closed 3-manifolds by Benedetti and Kang in \cite{Benedetti:2021aa}, generalizing a result of the first author together with Bramham, Hryniewicz and Salom\~ao in \cite{Abbondandolo:2018fb} for the 3-sphere. Recently, this result has been extended to manifolds of arbitrary dimension by the first author and Benedetti, see \cite{Abbondandolo:2019tl}. We refer the reader to the latter paper and to \cite{Alvarez-Paiva:2014uq} for a discussion on some consequences of the local systolic maximality of Zoll contact forms in metric and systolic geometry.

We denote by $\sigma(\lambda)$ the action spectrum (or period spectrum) of the Reeb flow of $\lambda$, i.e.\ the set
\[
\sigma(\lambda)=\big\{t>0\ \big|\ \fix(\phi_\lambda^t)\neq\varnothing\big\}.
\]
Note that every closed Reeb orbit contributes to $\sigma(\lambda)$ with all the multiples of its minimal period. In general, $\sigma(\lambda)$ is a non-empty closed set of Lebesgue measure zero, and for generic contact forms it is discrete. 

The number $\tau_1(\lambda)$ is the minimum of $\sigma(\lambda)$, and we would like to define $\tau_k(\lambda)$ as the $k$-th element of $\sigma(\lambda)$, where the elements of $\sigma(\lambda)$ are ordered increasingly and are counted with multiplicity given by the number of closed orbits having a given period. Since in general $\sigma(\lambda)$ is not discrete, a correct definition of $\tau_k(\lambda)$ is the following: $\tau_k(\lambda)$ is the infimum of all positive real numbers $\tau$ such that there exist at least $k$ closed Reeb orbits with period less than or equal to $\tau$; here, each iterate of a closed Reeb orbit contributes to the count. In formulas,
\begin{equation}
\label{e:tau_k}
\tau_k(\lambda):=\inf\Bigg\{\tau>0\ \Bigg|\ \sum_{0<t\leq\tau} \#\big(\fix(\phi_\lambda^t)/\sim\big)\geq k  \Bigg\},
\end{equation}
where $\sim$ is the equivalence relation on $Y$ identifying points on the same Reeb orbit, i.e.\ $z_0\sim z_1$ if and only if $z_1=\phi_\lambda^t(z_0)$ for some $t\in\R$. Note that the sequence of values $\tau_k(\lambda)$, $k\geq1$, is (not necessarily strictly) increasing and consists of elements of $\sigma(\lambda)$.  

If $\sigma(\lambda)$ is discrete and for any $\tau \in \sigma(\lambda)$ there are finitely many Reeb orbits of period $\tau$, then $k\mapsto \tau_k(\lambda)$ is a surjective map from $\N$ to $\sigma(\lambda)$. If instead there are infinitely many periodic orbits of (not necessarily minimal) period $\tau_{k}(\lambda)$ for some $k$, or a strictly decreasing sequence in $\sigma(\lambda)$ converging to $\tau_{k}(\lambda)$, then $\tau_h(\lambda)=\tau_{k}(\lambda)$ for every $h\geq k$. 

We now define the $k$-th systolic ratio of the contact form $\lambda$ as the positive number
\[
\rho_k(\lambda) := \frac{\tau_k(\lambda)^2}{\vol(Y,\lambda)}.
\]
The aim of this paper is to give a complete characterization of local maximizers of the $k$-th systolic ratio $\rho_k$.

Borrowing once more the terminology from Riemannian geometry, a contact form $\lambda$ on $Y$ is called Besse if all its Reeb orbits are closed. Here, different Reeb orbits are not required to have the same minimal period, and therefore Besse contact forms constitute a larger class than Zoll forms. Thanks to a theorem of Wadsley \cite{Wadsley:1975sp} or, in the special case of dimension 3, an earlier theorem of Epstein \cite{Epstein:1972}, Besse Reeb flows are periodic (see also \cite{Sullivan:1978}). In our case, since $Y$ has dimension 3, Epstein's theorem implies that all Reeb orbits of the Besse contact form $\lambda$ have the same minimal period $T$ except for finitely many ones, whose minimal period divides $T$. The orbits of the first kind are called regular, whereas the finitely many exceptional orbits with smaller minimal period are called singular.

In Riemannian geometry, suitable lens spaces have a geodesic flow that is Besse but not Zoll. Nevertheless, on simply connected manifolds, Besse geodesic flows are conjectured to be Zoll: this was confirmed for the 2-sphere, thanks to a classical result of Gromoll and Grove \cite{Gromoll:1981kl}, and for $n$-spheres of dimension $n\geq 4$, by a recent result of Radeschi and Wilking \cite{Radeschi:2017dz}. In the more general class of Finsler geodesic flows, and in the even larger class of Reeb flows, there are plenty of examples of flows that are Besse but not Zoll: the simplest ones are the geodesic flows of rational Katok's Finsler metrics on the 2-sphere, see \cite{Katok:1973mw,Ziller:1983rw}, and the Reeb flows on rational ellipsoids in $\C^2$; other examples are the geodesic flows on certain Riemannian orbifolds, see \cite{Besse:1978pr,Lange:2020,Lange:2021}.

The theory of Seifert fibrations leads to the construction of many more examples and to a full classification of Besse Reeb flows in dimension 3, see \cite{Kegel:2021vo} and Section \ref{s:surgery} below. Indeed, the Reeb flow of a Besse contact form $\lambda$ on $Y$ induces a locally free $S^1$-action, whose quotient projection $\pi: Y \rightarrow B$ is a Seifert fibration over a 2-dimensional orbifold $B$. The Euler number $\eul$ of such a Seifert fibration  is rational and negative, see \cite{Lisca:2004oz}, and conversely any Seifert fibration with negative Euler number can be realized in this way. Moreover
\begin{equation}
\label{volumeBesse}
\vol(Y,\lambda) = -T^2 \eul,
\end{equation}
where $T$ is the minimal common period of the Reeb orbits of $\lambda$, see \cite[Cor.~6.3]{Geiges:2020aa} or Lemma \ref{l:volume_Besse} below.

If $\lambda$ is a Besse contact form on the closed 3-manifold $Y$,  the sequence $\tau_k(\lambda)$ which we introduced above stabilizes: denoting by $T$ the minimal common period of the Reeb orbits, by $\gamma_1,\dots,\gamma_h$ the singular Reeb orbits, and by $\alpha_1,\dots,\alpha_h$ the integers greater than 1 such that $\gamma_i$ has minimal period $T/\alpha_i$, we find that $\tau_k(\lambda)=T$ for every $k\geq k_0(\lambda)$, where 
\[
k_0(\lambda):= \alpha_1+\dots+\alpha_h - h + 1,
\]
and $k_0(\lambda)$ is the minimal integer with this property. Indeed, the Reeb flow of $\lambda$ has a continuum of orbits of minimal period $T$ and precisely $\alpha_1+\dots+\alpha_h - h$ orbits of period strictly less than $T$, given by the iterates $\gamma_i^j$ for $1\leq j \leq \alpha_i - 1$ of the singular orbits. 

Together with (\ref{volumeBesse}), the above considerations yield the following formula for the $k_0(\lambda)$-th systolic ratio of the Besse contact form $\lambda$:
\[
\rho_{k_0(\lambda)} (\lambda) = - \frac{1}{\eul},
\]
where $\eul$ is the Euler number of the Seifert fibration $\pi:Y\to B$ induced by $\lambda$.

We now state the main result of this paper, which characterizes Besse contact forms as local maximizers of the higher systolic ratios.

\begin{maintheorem}
\label{t:main}
Let $Y$ be a closed, connected, orientable 3-manifold and $k$ a positive integer. 
\begin{itemize}
\item[$(i)$] If a contact form $\lambda_0$ on $Y$ is a local maximizer of the $k$-th systolic ratio $\rho_k$ in the $C^{\infty}$-topology, then $\lambda_0$ is Besse with $k_0(\lambda_0)=k$.

\item[$(ii)$] Every Besse contact form $\lambda_0$ on $Y$ such that $k_0(\lambda_0)=k$ has a $C^3$-neigh\-bor\-hood $\UU$ in the space of contact forms on $Y$ such that
\[
\rho_k(\lambda) \leq \rho_k(\lambda_0), \qquad \forall \lambda\in \UU,
\]
with equality if and only if there exists a diffeomorphism $\theta:Y\to Y$ such that $\theta^*\lambda=c\lambda_0$ for some $c>0$.

\end{itemize}
\end{maintheorem}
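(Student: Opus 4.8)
The plan is to prove the two parts by rather different means: part $(ii)$ by a quantitative local comparison that upgrades the known Zoll case to the Seifert-fibered setting, and part $(i)$ by a contradiction argument assembled from local perturbations.

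\textbf{Part $(ii)$: reductions.} Since $\tau_k$ and $\vol$ are invariant under pull-back by diffeomorphisms, and $\rho_k$ is invariant under constant rescalings, I would first normalize. By Gray stability, every contact structure sufficiently $C^2$-close to $\xi_0:=\ker\lambda_0$ is carried onto $\xi_0$ by a diffeomorphism $C^2$-close to the identity; composing with such a diffeomorphism and rescaling, it suffices to treat contact forms of the form $\lambda=f\lambda_0$ with $f\colon Y\to\R$ positive, $C^3$-close to $1$, and normalized so that
\[\vol(Y,f\lambda_0)=\int_Y f^2\,\lambda_0\wedge\diff\lambda_0=\vol(Y,\lambda_0)=-T^2\eul.\]
With this normalization the inequality $\rho_k(f\lambda_0)\le\rho_k(\lambda_0)$ becomes $\tau_k(f\lambda_0)\le T$ and the equality case becomes $f\equiv1$. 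The Reeb vector field of $f\lambda_0$ equals $f^{-1}(R_{\lambda_0}+X_f)$, where $X_f\in\xi_0$ is characterized by $\diff\lambda_0(X_f,\cdot)|_{\xi_0}=\diff f|_{\xi_0}$; thus, up to time reparametrization, the Reeb flow of $f\lambda_0$ is a small perturbation of the locally free $S^1$-action of the Seifert fibration $\pi\colon Y\to B$, whose base carries the orbifold symplectic form $\omega_0$ determined by $\diff\lambda_0=\pi^*\omega_0$, with $\int_B\omega_0=-T\eul$.

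\textbf{Part $(ii)$: reduction to the base, counting, and conclusion.} I would then pass to $B$ via the fiberwise average $\overline f:=\tfrac1T\int_0^T f\circ\phi_{\lambda_0}^t\,\diff t$, a smooth orbifold function on $B$. For $f$ close to $1$, every closed Reeb orbit of $f\lambda_0$ of period at most $T+\varepsilon$ is $C^1$-close either to a regular fiber or to an iterate $\gamma_i^j$, $1\le j\le\alpha_i$, of a singular fiber. The singular orbits persist because the $\lambda_0$-orbit over a singular point $p_i\in B$ is nondegenerate, its linearized return map being rotation by $2\pi/\alpha_i$ on $\xi_0$; the first variation of the period shows that the $j$-th iterate of the perturbed singular orbit has period close to $\tfrac{jT}{\alpha_i}\overline f(p_i)$, so it contributes $\alpha_i-1$ iterates of period strictly below $T$ unconditionally, plus one further iterate of period at most $T$ precisely when $\overline f(p_i)\le1$. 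The orbits near regular fibers are organized by an averaged first-return map $\psi_f$, a Hamiltonian diffeomorphism of $(B,\omega_0)$ that is $C^2$-close to the identity, whose fixed points are in period-preserving correspondence (with periods close to $T\overline f(\cdot)$) with these orbits and whose Calabi invariant governs the contact volume change; the points $p_i$ are automatically fixed by $\psi_f$, since near $p_i$ the base is a disk modulo a rotation $\Z_{\alpha_i}$-action and $\psi_f$ is equivariant. Since $\sum_i(\alpha_i-1)=k-1$, we get $\tau_k(f\lambda_0)\le T$ as soon as $\overline f\le1$ somewhere on the fixed-point set of $\psi_f$. To establish this I would use that a $C^1$-small Hamiltonian diffeomorphism of the closed symplectic orbifold $(B,\omega_0)$ has a fixed point where its generating Hamiltonian — affine in $\overline f$ to leading order — is minimized, together with the power-mean inequality $\overline f^{\,2}\le\overline{f^{2}}$, which combined with the $\phi_{\lambda_0}^t$-invariance of $\lambda_0\wedge\diff\lambda_0$ and the volume normalization yields $\int_Y\overline f^{\,2}\,\lambda_0\wedge\diff\lambda_0\le\int_Y f^{2}\,\lambda_0\wedge\diff\lambda_0=\int_Y\lambda_0\wedge\diff\lambda_0$; hence $\min_B\overline f<1$ unless $\overline f\equiv1$, and $\overline f\equiv1$ forces $f\equiv1$. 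This gives $\tau_k(f\lambda_0)<T$ whenever $f\not\equiv1$, which is both the inequality and, unwinding the reductions, the rigidity statement $\lambda=c\,\theta^*\lambda_0$.

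\textbf{Part $(i)$.} I would argue by contraposition, in three cases. If $\lambda_0$ is Besse with $k_0(\lambda_0)=k$, part $(ii)$ already makes it a local maximizer, so assume this fails. If $\lambda_0$ is Besse with $k_0(\lambda_0)<k$, I would perturb to $f\lambda_0$ with $f$ equal to a constant $1+\varepsilon'$ outside an arbitrarily small neighborhood of one singular fiber and slightly less than $1$ on it: then the $\alpha_i$-th iterate of that singular orbit acquires period below $T$ while all regular fibers and all other top iterates of singular orbits acquire period close to $T(1+\varepsilon')>T$, so the count of orbits of period at most $T$ stays $k_0(\lambda_0)$, $\tau_h(f\lambda_0)=T(1+\varepsilon')$ for all $h>k_0(\lambda_0)$, and $\vol(Y,f\lambda_0)<(1+\varepsilon')^2\vol(Y,\lambda_0)$, whence $\rho_k(f\lambda_0)>-1/\eul=\rho_k(\lambda_0)$. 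If $\lambda_0$ is Besse with $k_0(\lambda_0)>k$, then $\tau_k(\lambda_0)<T$ is realized by iterates of singular orbits, and a perturbation localized in arbitrarily small neighborhoods of the relevant singular fibers that lengthens those iterates while changing the contact volume by a negligible amount raises $\rho_k$. Finally, if $\lambda_0$ is not Besse, then near a non-closed Reeb orbit, or a closed orbit of period at most $\tau_k(\lambda_0)$ whose return map is not rigid, I would insert a $C^\infty$-small, compactly supported modification of the Zoll-case type that raises $\rho_k$. In all cases $\lambda_0$ fails to be a local maximizer, proving $(i)$.

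\textbf{Main obstacle.} The delicate point is the construction of the averaged first-return map $\psi_f$ on the \emph{singular} quotient $B$, and its uniform $C^2$-control as $f$ ranges over a neighborhood of $1$: one must make sense of a Hamiltonian return dynamics across the orbifold points, identify its generating Hamiltonian — equivalently, control its action spectrum and its Calabi invariant — precisely enough that the relevant critical value is compared with the values of $\overline f$ with an error of higher order than $\min_B\overline f-1$, a quantity that may itself be as small as $\|f-1\|$, and bookkeep exactly the multiplicities of the short orbits at the singular fibers. This is where the passage from Zoll to Besse is genuinely new and where the sharpness of the inequality, together with its equality case, is won or lost.
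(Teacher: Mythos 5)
Your proposal for part~(i) is essentially sound and uses the same kind of local perturbations as the paper, though arranged somewhat differently. The paper first shows, by a volume--shrinking conformal perturbation supported near a point whose orbit is open or too long, that a local maximizer is Besse with $k_0(\lambda_0)\le k$; it then excludes $k_0<k$ by the explicit perturbation $(1+\epsilon\, h\circ\pi)\lambda_0$. Your three-case split is workable, but your fourth case (``not Besse'') is left vague (``a modification of the Zoll-case type''), and your third case ($k_0>k$) is unnecessary given the cleaner organization: the volume-shrinking step already forces $k_0\le k$.

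For part~(ii), however, there is a genuine gap, and it lies exactly where you flag it yourself. Your plan rests on an ``averaged first-return map'' $\psi_f$ of the symplectic orbifold $(B,\omega_0)$, together with a period formula ``close to $T\overline f(\cdot)$'' at its fixed points and a Calabi/volume relation. The difficulty is not merely one of uniform estimates: the object $\psi_f$ is not an honest return map, because the Reeb flow of $f\lambda_0$ does \emph{not} descend to $B$ --- the Seifert fibers are not $R_{f\lambda_0}$-invariant --- so there is no canonical diffeomorphism of $B$ to take the Calabi invariant of. Any averaging construction yields $\psi_f$ only up to error terms, and all three ingredients you need are then approximate: the periods of closed Reeb orbits match $T\overline f$ at fixed points only to leading order, the Calabi invariant matches the volume ratio only to leading order, and the Jensen gain $\overline f^{\,2}\le\overline{f^{2}}$ you use to force $\min\overline f<1$ produces a deficit that can be of the \emph{same} size as the errors you are neglecting. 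Consequently the required strict inequality $\tau_{k_0}(f\lambda_0)<T$ is not established. The paper circumvents exactly this by working with a genuine global surface of section $\iota\colon\Sigma\to Y$ with boundary covering a fixed singular orbit $\gamma_1$ (Theorem~\ref{mt:surface}/\ref{t:sos_Besse}); there the first-return map is a real exact symplectomorphism, the identities $a_{\tilde\phi,\nu}=\tau-1$ and $\widehat{\Cal}(\tilde\phi)=\vol(Y,\lambda)/\vol(Y,\lambda_0)-1$ are \emph{exact}, and a refined fixed-point theorem with a quadratic correction in the action (Theorem~\ref{mt:fixed}/\ref{t:fixed_point}) delivers the sharp bound $(1+a_{\tilde\phi}(z))^2\le\vol(Y,\lambda)/\vol(Y,\lambda_0)$. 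That quadratic refinement is indispensable precisely because the boundary orbit $\gamma_1$ need not realize $\tau_{k_0}$, a point the paper discusses in Remark~\ref{newrem}; your averaging scheme neither supplies the exactness of these identities nor an analogue of the quadratic improvement, so the argument as written does not close.

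Your Gray-stability normalization to $\lambda=f\lambda_0$ is a clean idea, and your bookkeeping of the short iterates $\gamma_i^j$ (giving $k_0-1$ of the needed $k_0$ orbits) is correct and matches the paper. If you want to salvage this route, you would essentially have to re-derive, in the orbifold setting, an exact return map with zero flux together with a quantitative fixed-point statement carrying a quadratic action term --- which is precisely the technical content the paper packages into Theorems~\ref{t:sos_Besse} and~\ref{t:fixed_point}.
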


We remark that Besse contact forms are never global maximizers of $\rho_k$ on the space of contact forms inducing a given contact structure $\xi$ on the closed 3-manifold $Y$: indeed, $\rho_k\geq \rho_1$ and $\rho_1$ is unbounded from above on the space of all contact forms on $(Y,\xi)$. See \cite{Abbondandolo:2019ta} for the case of 3-dimensional contact manifolds and \cite{Saglam:2018tb} for the general case.

\begin{exm}
It is instructive to consider Theorem~\ref{t:main} in the case $Y=S^3$. Any Besse contact form on $S^3$ coincides, up to a diffeomorphism and multiplication by a positive number, with the restriction of the standard Liouville 1-form
\[
\lambda_0:= \frac{1}{2} \sum_{j=1}^2 \big(x_j \, dy_j - y_j \, dx_j \big)
\]
of $\R^4$ to the boundary of the solid ellipsoid
\[
E(p,q):= \biggl\{ z\in \C^2 \ \bigg|\ \frac{ |z_1|^2}{p} + \frac{ |z_2|^2}{q} \leq \frac1\pi \biggr\} \subset \C^2 = \R^4,
\]
where $p\leq q$ are coprime positive integers, see for instance \cite[Prop.~5.2]{Geiges:2018aa} and \cite[Th.~1.1]{Mazzucchelli:2020aa}. The Reeb flow of the contact form 
\[
\lambda_{p,q}:= \lambda_0|_{\partial E(p,q)}
\]
has a closed orbit of minimal period $p$, a closed orbit of minimal period $q$ and all other orbits have minimal period $pq$. Therefore,
\[
k_0(\lambda_{p,q}) = p + q - 1,
\]
and, for  $k_0:= k_0(\lambda_{p,q})$,
\[
\rho_{k_0} (\lambda_{p,q}) = pq .
\] 
In particular, $k_0(\lambda_{1,k}) = k$ and, according to Theorem~\ref{t:main}, for every $k\geq 1$ the contact form $\lambda_{1,k}$ is a local maximizer of $\rho_k$. For $k=1,2,3,5$, this is the only local maximizer of $\rho_k$ on $S^3$, but for all the other values of the positive integer $k$ the linear Diophantine equation $ p + q - 1 = k$ is easily seen to have more positive solutions $p\leq q$ that are coprime. For instance, $\rho_4$ is locally maximized by both $\lambda_{1,4}$ and $\lambda_{2,3}$, with $\rho_4(\lambda_{1,4})=4$ and $\rho_4(\lambda_{2,3})=6$. The number of local mazimizers of $\rho_k$ on contact forms on $S^3$ diverges for $k\rightarrow \infty$.
\hfill\qed
\end{exm}

\begin{exm}\label{exm:besse_spindle}
Other natural applications of Theorem \ref{t:main} concern geodesic flows on Riemannian 2-orbifolds. Consider for instance the spindle orbifold $S^2(m,n)$ whose underlying space is $S^2$ and which has two conic singularities of order $m$ and $n$, respectively. Here, $m$ and $n$ are positive integers and a conic singularity of order $m$ corresponds to the local model $\R^2/\Z_m$, where the cyclic group $\Z_m$ acts by rotations. The case $m=n=1$ gives us the standard smooth 2-sphere. Let us assume $m+n>2$, so that we have at least one singular point. The geodesic flow of any Riemannian metric on $S^2(m,n)$ can be seen as a smooth Reeb flow on the lens space $L(m+n,1)$, i.e.\ the quotient of $S^3\subset \C^2$ by the free action of $\Z_{m+n}$ which is generated by the diffeomorphism
\[
(z_1,z_2) \mapsto \bigl( e^{\frac{2\pi i}{m+n}} z_1,  e^{\frac{2\pi i}{m+n}} z_2 \bigr),
\]
see \cite{Lange:2020}.  The spindle orbifold $S^2(m,n)$ admits a Besse Riemannian metric turning it into a Tannery surface: the spindle orbifold is realized as a sphere of revolution having the two cone singularities at the poles, see \cite[Chapter 4]{Besse:1978pr}. The equator is a closed geodesic of length $2\pi$ and all other geodesics are closed with length $2\pi a$, where $a:= m+n$ if $m+n$ is odd and $a:= \frac{m+n}{2}$ if $m+n$ is even. Here, meridians are seen as geodesic segments belonging to closed geodesics of length $2\pi a$. 

The geodesic flow of this Tannery surface has two periodic orbits of minimal period $2\pi$, corresponding to the two orientations of the equator, and all other orbits are closed with minimal period $2\pi a$. Therefore, the integer $k_0$  associated with the corresponding Besse contact form on $L(m+n,1)$ is 
\[
k_0 := 2a -1.
\]
The Tannery surface is a local maximizer in the $C^3$-topology of Riemannian metrics on $S^2(m,n)$ of the $k_0$-th systolic ratio given by the square of the length of the $k_0$-th shortest closed geodesic, where closed geodesics are counted with multiplicity as in (\ref{e:tau_k}), and the Riemannian area of the orbifold. In other words, if the Riemannian metric of the Tannery surface is modified by a $C^3$-small perturbation not affecting the Riemannian area, then the new geodesic flow is either still Besse, and in this case is smoothly conjugate to the Tannery geodesic flow, or the following holds: if the closed geodesic which is obtained by continuation from the equator (which is non-degenerate in the case $m+n>2$ we are considering here) is not shorter than $2\pi$, then there exists a closed geodesic of minimal length close to $2\pi a$ and smaller than this number.   

An analogous result holds for Finsler perturbations of the Tannery surface, where now the two closed geodesics which are obtained by continuation from the equator might be geometrically distinct and have different lengths, if the Finsler perturbation is not reversible. 

Actually, the second author and Soethe \cite{Lange:2021} proved that, within the class of Riemannian rotationally symmetric spindle 2-orbifolds, the Besse ones are even the global maximizers of the suitable higher systolic ratio. 
\hfill\qed
\end{exm}

\subsection{Sketch of the proof of Theorem \ref{t:main}}
We conclude this introduction by giving an informal sketch of the proof of Theorem \ref{t:main}. 

The proof of statement (i) is elementary. First we show that all the Reeb orbits of a contact form $\lambda_0$ which locally maximize $\rho_k$ are closed and have minimal period not exceeding $\tau_k(\lambda_0)$: if there is a point $x\in Y$ whose orbit violates this assertion, we can deform $\lambda_0$ in a neighborhood of $x$ and make the volume smaller without introducing closed orbits of period smaller than $\tau_k(\lambda_0)$. This shows that $\lambda_0$ is Besse with $k_0(\lambda_0)\leq k$. It remains to show that a Besse contact form $\lambda$ does not locally maximize $\rho_k$ if $k> k_0(\lambda_0)$. This can be done by considering explicit perturbations of $\lambda_0$ of the form $(1+\epsilon\, h\circ \pi) \lambda_0$, where $\pi: Y \rightarrow B$ is the quotient projection induced by the locally free $S^1$-action given by the Reeb flow of $\lambda_0$ and $h$ is a suitable smooth real function on $B$.  

The proof of statement (ii) is based on global surfaces of section and on a quantitative fixed point theorem for Hamiltonian diffeomorphisms of compact surfaces that are close to the identity. This kind of arguments has already been used in \cite{Abbondandolo:2018fb, Benedetti:2021aa} in order to prove that Zoll contact forms are local maximizers of $\rho_1$ on closed 3-manifolds, but here we need two new ingredients which may be of independent interest.

We sketch the argument in the case of a Besse contact form $\lambda_0$ that is not Zoll, and hence $k_0:= k_0(\lambda_0)>1$, but in the detailed proof we give in Section \ref{s:proof} we shall recover also the case in which $\lambda_0$ is Zoll. In this paper, by a global surface of section for the flow of the Reeb vector field $R_{\lambda}$ we mean 
a smooth map
$\iota: \Sigma \rightarrow Y$
from an oriented compact surface $\Sigma$ whose restriction to each component of the boundary $\partial\Sigma$ is a positive covering of some periodic orbit of $R_{\lambda}$, whose restriction to the interior of $\Sigma$ is an embedding into $Y\setminus\iota(\partial\Sigma)$ transversal to $R_{\lambda}$, and such that every orbit of $R_{\lambda}$ intersects $\iota(\Sigma)$ in positive and negative time. The first new ingredient is the following result.

\begin{maintheorem}
\label{mt:surface}
If $\lambda_0$ is a Besse contact form on  the closed 3-manifold $Y$ and $\gamma$ is any orbit of $R_{\lambda_0}$, then the Reeb flow of $\lambda_0$ admits a global surface of section $($as in the previous paragraph$\,)$ with $\iota(\partial \Sigma) = \gamma$. 
\end{maintheorem}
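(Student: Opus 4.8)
The plan is to construct the global surface of section by combining the Seifert-fibered structure induced by the Besse Reeb flow with an open-book-type decomposition of $Y$ adapted to the orbit $\gamma$. First, recall that the Reeb flow of $\lambda_0$ generates a locally free $S^1$-action with quotient projection $\pi : Y \to B$ onto a closed $2$-orbifold, and that all Reeb orbits are closed of period dividing the regular period $T$. The orbit $\gamma$ projects to a point $b_0 \in B$, which is either a regular point or a cone point of order $\alpha$. I would pick an embedded path or more generally a $1$-cycle in $B$ whose complement deformation-retracts onto $b_0$ in a controlled way; the cleanest approach is to first treat the case where $B$ is (as an orbifold) the quotient and to build the section over $B \setminus \{b_0\}$ by choosing a suitable function whose level sets, lifted to $Y$, give the pages.

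The key steps, in order, are: (1) Reduce to a model near $\gamma$: in a tubular neighborhood $\gamma \times D^2$ of $\gamma$, with $\gamma$ of minimal period $T/\alpha$, the Reeb flow is conjugate to a rotation, and one can write down the section explicitly as a family of disk-like pieces winding around $\gamma$ (this is where the boundary covering multiplicity — a positive multiple of $\gamma$ — is determined by $\alpha$ and by the local rotation/self-linking data). (2) Construct the section over the complement of this neighborhood: away from $\gamma$, use the $S^1$-action. Choosing a connection $1$-form for the Seifert fibration $Y \setminus \gamma \to B \setminus \{b_0\}$ (or equivalently working with $\lambda_0$ itself, whose kernel is $R_{\lambda_0}$-invariant), one realizes $Y \setminus \gamma$ as a mapping torus over a surface-with-boundary once one picks a generic section of $\pi$ over $B\setminus\{b_0\}$ punctured appropriately; the return map is then essentially finite order up to the monodromy contributed by the other singular fibers. (3) Glue the local model to the global piece along the boundary torus of the tubular neighborhood, matching the boundary orientations and the covering multiplicities, and verify transversality to $R_{\lambda_0}$ throughout and that every orbit meets the surface in forward and backward time — the latter is automatic once $\iota(\Sigma)$ meets every fiber of $\pi$, which can be arranged since the fibers are the Reeb orbits.

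The main obstacle I expect is step (2)–(3): ensuring that the surface chosen over $B \setminus \{b_0\}$ can be capped off correctly near the remaining cone points of $B$ (the singular orbits other than $\gamma$) and near $\gamma$ simultaneously, with globally consistent orientations and with the boundary wrapping $\gamma$ a \emph{positive} number of times. Concretely, one must check that the total "twisting" — which is governed by the Euler number $\eul$ of the Seifert fibration (rational and negative by \cite{Lisca:2004oz}) together with the Seifert invariants at the cone points — has the right sign so that a consistent choice of pages exists; negativity of $\eul$ is exactly what makes this work, analogously to how a Zoll form on a $3$-manifold fibers over a surface with a disk fiber as section only when the Euler number is negative. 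A secondary technical point is that $\Sigma$ need not be connected and may be an orbifold cover rather than an honest section in the naive sense, so some care is needed in phrasing the boundary condition as "a positive covering of $\gamma$" rather than "$\gamma$ traversed once"; this is already built into the definition given in the excerpt, so it is a matter of bookkeeping rather than a genuine difficulty.
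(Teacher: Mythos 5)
Your overall strategy --- a local model near $\gamma$ given by a family of helicoidal pieces in a tubular neighbourhood, an extension over the rest of the base orbifold using the Seifert structure, and a gluing step, with the negativity of the Euler number controlling the sign --- is the same as the paper's. But two essential points are left unresolved in your sketch, and one of them is actually asserted incorrectly.

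First, the "main obstacle" you identify is precisely where the whole proof lives, and it is not overcome. The reason the paper can cap off the surface near the other singular fibres $\gamma_2,\dots,\gamma_k$ by disjoint constant sections of the solid tori $Y_i\cong B_i\times S^1$ is that the number of sheets over the regular part of the base is taken to be $\alpha:=\lcm(\alpha_2,\dots,\alpha_k)$. With this choice, the restriction of the putative surface to the boundary torus $\pi^{-1}(\partial B_i)$ winds $\alpha/\alpha_i\in\N$ times around the meridian $m_i$ and zero times around the longitude $l_i$, which is exactly the condition allowing it to be capped by $\alpha/\alpha_i$ horizontal disks. Near $\gamma_1$ the corresponding count is encoded in $p:=\beta_1\alpha+\alpha_1\beta=-\eul(Y)\alpha_1\alpha$ and $q:=-\beta_1'\alpha-\alpha_1'\beta$, with $b:=\gcd(p,q)$ and $(p_0,q_0):=(p,q)/b$; the inequality $q_0/p_0<-\alpha_1'/\alpha_1$ (transversality to $R_{\lambda_0}$) and the positivity $p_0>0$ (positive boundary covering) are exactly what $\eul(Y)<0$ buys you. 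None of this arithmetic appears in your proposal, and without it the gluing is not shown to be possible.

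Second, you write that $\Sigma$ "need not be connected" and that this is a matter of bookkeeping. That is not correct for the definition used here: a global surface of section is required to have $\Sigma$ an oriented \emph{connected} compact surface (Subsection 3.1). Verifying connectedness is a genuine step, and in the paper it is where the choice $\alpha=\lcm(\alpha_2,\dots,\alpha_k)$ pays off a second time: one checks that $\gcd\bigl(\alpha,\,\tfrac{\beta_2\alpha}{\alpha_2},\dots,\tfrac{\beta_k\alpha}{\alpha_k}\bigr)=1$, using that each pair $(\alpha_i,\beta_i)$ is coprime, and this gcd condition is exactly what makes the $\alpha$ sheets over the regular part and the pieces near the singular fibres assemble into a single connected surface. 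Without this you might well produce a disjoint union of surfaces of section, which would not satisfy the definition and would also be useless for the first-return-map argument later in the paper (the first return map must be a diffeomorphism of a single $\Sigma$). So the connectedness check is not bookkeeping; it is a necessary part of the argument, and your proposal as written would need to add both the $\lcm$-based choice of $\alpha$ and the resulting gcd computation.
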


See Theorem~\ref{t:sos_Besse} below for a more detailed statement. We remark that the boundary of $\Sigma$ may have several components, but they are all mapped onto $\gamma$ by $\iota$. See also \cite{Albach:2021wi} for related results about global surfaces of section for general flows on 3-manifolds defining a Seifert fibration.

We normalize $\lambda_0$ so that all its regular orbits have minimal period 1, that is, $\tau_{k_0}(\lambda_0)=1$. We apply Theorem \ref{mt:surface} to some singular orbit $\gamma_1$ of period $1/\alpha_1$ of the Reeb flow of $\lambda_0$, which we fix once and for all. The embedded surface $\iota(\mathrm{int}(\Sigma))$ intersects each regular orbit of $R_{\lambda_0}$ exactly $\alpha$ times, for some $\alpha\in \N$ which can be derived from the invariants of the Seifert fibration induced by $\lambda_0$. 

Now consider a contact form $\lambda$ which is suitably close to $\lambda_0$. Since the singular orbits of Besse Reeb flows are non-degenerate, the Reeb flow of $\lambda$ has a closed orbit which is close to $\gamma_1$. Up to multiplying $\lambda$ by a constant and applying a diffeomorphism to it, we can assume that $R_{\lambda}$ coincides with $R_{\lambda_0}$ on $\gamma_1$, which is therefore a closed orbit of both flows, with the same period $1/\alpha_1$. In this case, we can show that $\iota:\Sigma \rightarrow Y$ is a global surface of section also for the Reeb flow of $\lambda$, provided that $\lambda$ is close enough to $\lambda_0$.

We now consider the diffeomorphism
\[
\phi: \Sigma\rightarrow \Sigma
\]
which is given by the $\alpha$-th iterate of the first return map of the flow of $R_{\lambda}$ to $\Sigma$. This map is actually defined only in the interior of $\Sigma$, but we will show that it extends to a diffeomorphism on $\Sigma$. The exact smooth 2-form $\omega:= \iota^*(d\lambda)$ is symplectic in the interior of $\Sigma$ and vanishes with order 1 on the boundary. The map $\phi$ is an exact symplectomorphism on $(\Sigma,\omega)$ and actually
\[
\phi^* \lambda - \lambda = d\tau,
\]
where $\tau: \Sigma \rightarrow (0,+\infty)$ is the $\alpha$-th return time of the flow of $R_{\lambda}$ (or, more precisely, the smooth extension to $\Sigma$ of this function, which is defined in the interior of $\Sigma$). The volume of $(Y,\lambda)$ can be recovered by $\tau$ thanks to the identity
\[
\vol(Y,\lambda) = \frac{1}{\alpha} \int_{\Sigma} \tau\, \omega.
\]
The exact symplectomorphism $\phi$ lifts to a unique element $\tilde\phi$ of $\widetilde{\mathrm{Ham}}_0(\Sigma,\omega)$ which is $C^1$-close to the identity. Here, $\widetilde{\mathrm{Ham}}_0(\Sigma,\omega)$ denotes the subgroup of the universal cover of the group of Hamiltonian diffeomorphisms of $(\Sigma,\omega)$ consisting of isotopy classes $[\{\phi_t\}]$ starting at the identity which have vanishing flux on any curve connecting pairs of points on $\partial \Sigma$. The zero flux condition is important here and holds because we are considering a global surface of section with boundary on just one closed orbit.

Elements $\tilde{\psi}$ of $\widetilde{\mathrm{Ham}}_0(\Sigma,\omega)$ have a well-defined action
\[
a_{\tilde{\psi},\nu} : \Sigma \rightarrow \R, \qquad \psi^* \nu - \nu = d a_{\tilde{\psi},\nu},
\]
with respect to any primitive $\nu$ of $\omega$, where $\psi$ denotes the projection of $\tilde{\psi}$ to the Hamiltonian group. The action at contractible fixed points is independent of $\nu$, and so is the integral of the action on $(\Sigma,\omega)$, which defines the normalized Calabi invariant of $\tilde{\psi}$, i.e.\ the number
\[
\widehat{\Cal}(\tilde{\psi}) := \frac{1}{\area(\Sigma,\omega)} \int_{\Sigma} a_{\tilde{\psi},\nu} \, \omega.
\]
In the case of the lift $\tilde{\phi}$ of the $\alpha$-th return map $\phi$ and of the primitive $\nu:= \iota^*\lambda$ of $\omega$, we obtain the identities
\begin{equation}
\label{intro1}
a_{\tilde{\phi},\nu} = \tau - 1, \qquad \widehat{\Cal}(\tilde{\phi}) = \frac{\vol(Y,\lambda)}{\vol(Y,\lambda_0)} - 1.
\end{equation}

The second new ingredient of this paper is the following fixed point theorem.

\begin{maintheorem}
\label{mt:fixed}
Let $\omega$ be a smooth exact 2-form on the compact surface $\Sigma$ which is symplectic in the interior and vanishes with order 1 on the boundary. For every $c>0$ there exists a $C^1$-neighborhood $\mathcal{U}\subset\widetilde{\mathrm{Ham}}_0(\Sigma,\omega)$ of the identity such that every $\tilde{\psi}\in\mathcal{U}$ with $\widehat{\Cal}(\tilde{\psi})\leq 0$ has a contractible interior fixed point $z$ such that
\[
a_{\tilde{\psi}}(z) + c \, a_{\tilde{\psi}}(z)^2 \leq \frac{1}{2}\, \widehat{\Cal}(\tilde{\psi}),
\]
with the equality holding if and only if $\tilde{\psi}$ is the identity.
\end{maintheorem}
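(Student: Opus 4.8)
The plan is to reduce Theorem~\ref{mt:fixed} to the existence of a suitable generating function for $\psi$ (the image of $\tilde\psi$ under the projection $\widetilde{\mathrm{Ham}}_0(\Sigma,\omega)\to\mathrm{Ham}(\Sigma,\omega)$) and then to close the argument with an elementary algebraic manipulation exploiting the $C^1$-smallness of $\tilde\psi$.

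First I would fix a primitive $\nu$ of $\omega$, which exists since $\partial\Sigma\neq\varnothing$, and, for $\tilde\psi$ in a small $C^1$-neighborhood $\UU_0$ of the identity, consider the action function $a_{\tilde\psi,\nu}\colon\Sigma\to\R$ from the introduction, with $\psi^*\nu-\nu=\diff a_{\tilde\psi,\nu}$ and $\widehat{\Cal}(\tilde\psi)=\frac{1}{\area(\Sigma,\omega)}\int_\Sigma a_{\tilde\psi,\nu}\,\omega$; since a generating isotopy can be taken $C^1$-close to the constant one, every fixed point of $\psi$ is contractible, and $\tilde\psi\mapsto a_{\tilde\psi,\nu}$ is continuous into $C^0(\Sigma)$. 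The critical points of $a_{\tilde\psi,\nu}$ are in general not the fixed points of $\psi$, so I would pass to a genuine generating function: identify a neighborhood of the diagonal $\Delta$ in $\bigl(\Sigma\times\Sigma,\ \omega\oplus(-\omega)\bigr)$ --- a closed $2$-form which is symplectic in the interior and degenerates precisely along $\partial\Sigma\times\partial\Sigma$ --- with a neighborhood of the zero section of $\Tan^*\Sigma$ (suitably modified near $\partial\Sigma$), in such a way that the Lagrangian graph of $\psi$, being $C^1$-close to $\Delta$, becomes the graph of an \emph{exact} $1$-form $\diff S$ for some function $S\colon\Sigma\to\R$; exactness of this closed $1$-form uses that $\psi$ is Hamiltonian together with the vanishing-flux condition defining $\widetilde{\mathrm{Ham}}_0$. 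By construction one has $\crit(S)=\fix(\psi)$, at each fixed point $z$ one has $S(z)=a_{\tilde\psi}(z)$, the functions $S$ and $a_{\tilde\psi,\nu}$ are $C^1$-close, and --- this is the crucial point --- tracking the nonlinearity of the Weinstein chart yields the comparison
\[
\widehat{\Cal}(\tilde\psi)\;\geq\;\frac{1}{\area(\Sigma,\omega)}\int_\Sigma S\,\omega .
\]

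Next I would extract the fixed point and conclude. Let $m:=\min_\Sigma S$, attained at a point $z_0$. Using the normal form $\omega=s\,\diff s\wedge\diff\theta$ near a boundary circle and the fact that $\psi$ is $C^1$-close to the identity there, one shows that $m$ is also attained at a point of $\mathrm{int}(\Sigma)$ --- equivalently, that if every minimizer lies on $\partial\Sigma$ then $\psi$ still has an interior fixed point with action $\leq m$ --- so we may take $z_0\in\mathrm{int}(\Sigma)$: it is a contractible interior fixed point of $\psi$ with $a_{\tilde\psi}(z_0)=m\leq\frac{1}{\area(\Sigma,\omega)}\int_\Sigma S\,\omega\leq\widehat{\Cal}(\tilde\psi)\leq0$. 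Now shrink $\UU\subset\UU_0$ so that $\|a_{\tilde\psi,\nu}\|_{C^0}<\tfrac1{2c}$ for every $\tilde\psi\in\UU$, which forces $-\tfrac1{2c}<m\leq0$ and hence $1+cm>\tfrac12$. Then
\[
a_{\tilde\psi}(z_0)+c\,a_{\tilde\psi}(z_0)^2\;=\;m\,(1+cm)\;\leq\;\tfrac12\,m\;\leq\;\tfrac12\,\widehat{\Cal}(\tilde\psi),
\]
which is the asserted inequality. If equality holds, then $m\,(1+cm)=\tfrac12 m$ forces $m=0$ (since $1+cm>\tfrac12$) and $\tfrac12 m=\tfrac12\widehat{\Cal}(\tilde\psi)$ forces $\widehat{\Cal}(\tilde\psi)=0$; but $\min_\Sigma S=0$ gives $S\geq0$ on $\Sigma$, while $\int_\Sigma S\,\omega\leq\area(\Sigma,\omega)\,\widehat{\Cal}(\tilde\psi)=0$ then forces $S\equiv0$ (as $\omega$ is a positive area form in the interior), so the graph of $\diff S$ is the zero section, $\psi=\mathrm{id}$, and $\tilde\psi$ is the identity; conversely the identity realizes equality.

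The step I expect to be the main obstacle is everything tied to the degeneracy of $\omega$ on $\partial\Sigma$: constructing a Weinstein-type chart for a neighborhood of $\Delta$ in $\Sigma\times\Sigma$ when $\omega\oplus(-\omega)$ collapses along $\partial\Sigma\times\partial\Sigma$ --- the natural device being the non-smooth substitution $s\mapsto s^2/2$ that normalizes $\omega$ to $\diff u\wedge\diff\theta$ near each boundary circle, so that one must work with the resulting non-smooth structure (or pass to a double of $\Sigma$) and carefully control the boundary behaviour of all the maps involved; showing that the graph of $\psi$ remains a graph of an exact $1$-form up to the boundary with $\crit(S)=\fix(\psi)$ there; verifying the sign of the correction term in the comparison between $\widehat{\Cal}(\tilde\psi)$ and $\frac{1}{\area(\Sigma,\omega)}\int_\Sigma S\,\omega$; and, most delicately, ruling out the possibility that every minimizer of $S$ lies on $\partial\Sigma$. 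Once these points are in place, the remainder of the argument is the routine algebra sketched above.
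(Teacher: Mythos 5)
The central step you flag as crucial---the comparison
\[
\widehat{\Cal}(\tilde\psi)\;\geq\;\frac{1}{\area(\Sigma,\omega)}\int_\Sigma S\,\omega
\]
---is false, and the gap is not a matter of carefully tracking boundary degeneracies: the sign is wrong already to leading order in a model computation with no boundary subtleties at all. The Calabi invariant carries a hidden factor of $2$: as in Lemma~\ref{l:Calabi}, if $H_t$ is a normalized Hamiltonian generating $\tilde\psi$, then
\[
\Cal(\tilde\psi)=\int_\Sigma a_{\tilde\psi,\nu}\,\omega=2\int_0^1\!\!\int_\Sigma H_t\,\omega\,\diff t,
\]
because the path-integral term in the action formula \eqref{canaction} contributes the same amount as $\int_0^1 H_t\,\diff t$ after Stokes. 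A Weinstein-chart generating function $S$, on the other hand, is comparable to first order not to the action $a$ but to the time-average $K:=\int_0^1 H_t\,\diff t$ (this is visible in the relation $H_t=F\circ\pi\circ\psi(\phi_t^{-1}(\cdot),\cdot)$ from Theorem~\ref{t:quasi_autonomous}, which gives $K\approx F$ for $\tilde\psi$ near the identity). Hence $\frac{1}{\area}\int_\Sigma S\,\omega\approx\frac12\,\widehat{\Cal}(\tilde\psi)$, and since $\widehat{\Cal}(\tilde\psi)\leq 0$ this is $\geq\widehat{\Cal}(\tilde\psi)$, the opposite of your inequality. Consequently the chain $m\leq\frac{1}{\area}\int_\Sigma S\,\omega\leq\widehat{\Cal}(\tilde\psi)$ fails, and so does $m\leq\widehat{\Cal}(\tilde\psi)$ itself: the remark following Theorem~\ref{t:fixed_point} (building on \cite[Remark~2.21]{Abbondandolo:2018fb}) produces, for every $\eta>\frac12$, elements of $\widetilde{\Ham}_0(\Sigma,\omega)$ arbitrarily $C^\infty$-close to the identity with negative Calabi invariant and with every contractible fixed point of action strictly greater than $\eta\widehat{\Cal}$; taking $\eta<1$ this directly refutes $m\leq\widehat{\Cal}$. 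With that premise gone, the final algebra $m(1+cm)\leq\tfrac12 m\leq\tfrac12\widehat{\Cal}(\tilde\psi)$ collapses at the second inequality.

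The correct mechanism---both for the exact constant $\tfrac12$ and for the quadratic improvement---is not an algebraic manipulation of $m$: it is the \emph{exact} identity $\int_\Sigma K\,\omega=\tfrac12\Cal(\tilde\psi)$ together with the quadratic vanishing of $H_t$ along $\partial\Sigma$. Because $\phi$ preserves the degenerate $\omega$, any normalized generating Hamiltonian satisfies $|H_t(r,s)|\leq\epsilon r^2$ in the collar (Theorem~\ref{t:quasi_autonomous}(ii)), so $K$ cannot sink to its minimum value $-m$ near the boundary; integrating the pointwise bound $K\geq\max\{-\epsilon r^2,-m\}$ over each collar produces the strict lower bound $\int_\Sigma K\,\omega>-m\area(\Sigma,\omega)+N\,m^2/(4\epsilon)$, which after dividing by $\area(\Sigma,\omega)$ and choosing $\epsilon=N/(4\area(\Sigma,\omega)\,c)$ yields exactly $\tfrac12\widehat{\Cal}(\tilde\psi)\geq a_{\tilde\psi}(z_{\min})+c\,a_{\tilde\psi}(z_{\min})^2$ with strict inequality unless $K\equiv 0$. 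This boundary estimate is where the quadratic term truly originates; it cannot be replaced by the absorption $1+cm>\tfrac12$, which only works once one already has $m\leq\widehat{\Cal}$, a stronger (and false) linear bound.

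Your surrounding structure (quasi-autonomous generating data via a Weinstein chart adapted to the degenerate form, an interior minimizer as the desired fixed point, the characterization of the equality case) does parallel the paper's Section~\ref{s:fixed_point}, and the rigidity argument at the end is fine. But the heart of the inequality needs to pass through the $\tfrac12$-Calabi identity and the collar estimate rather than the asserted comparison between $\widehat{\Cal}$ and $\int S\,\omega$.
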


 See Theorem \ref{t:fixed_point} below and the discussion preceding it for the precise definition of all the notions involved in this theorem. The novelty here is the presence of the term which is quadratic in the action. Indeed, the weaker inequality without that term is proven in \cite{Abbondandolo:2018fb} when $\Sigma$ is the disk and in \cite{Benedetti:2021aa} when $\Sigma$ has just one boundary component, but the case of more boundary components can be taken care of similarly thanks to the zero-flux assumption. The constant $\frac{1}{2}$ is sharp in the above inequality, and the presence of the quadratic term is crucial in the conclusion of the argument that we sketch below. 

Since we are assuming that $\gamma_1$ is a closed orbit of $R_{\lambda}$ with minimal period $1/\alpha_1$, and since all the other singular orbits of $R_{\lambda_0}$ correspond to closed orbits of $R_{\lambda}$ of nearby period, the strict inequality $\rho_{k_0}(\lambda)<\rho_{k_0}(\lambda_0)$ holds trivially when $\vol(Y,\lambda) > \vol(Y,\lambda_0)$. Therefore, we can assume that $\vol(Y,\lambda) \leq \vol(Y,\lambda_0)$, which by (\ref{intro1}) implies $\widehat{\Cal}(\tilde{\phi})\leq 0$.
If $\lambda$ is $C^3$-close to $\lambda_0$, then $\tilde{\phi}$ is $C^1$-close to the identity and from Theorem \ref{mt:fixed} with $c=\frac{1}{2}$ we obtain the existence of an interior contractible fixed point $z$ of $\tilde\phi$ with
\begin{equation}
\label{intro2}
a_{\tilde{\phi}}(z) + \frac{1}{2} \, a_{\tilde{\phi}}(z)^2 \leq \frac{1}{2} \,\widehat{\Cal}(\tilde{\phi}).
\end{equation}
By (\ref{intro1}), the fixed point $z$ corresponds to a closed orbit $\gamma\neq \gamma_1^{\alpha_1}$ of $R_{\lambda}$ with (not necessarily minimal) period
\[
\tau(z) = 1 +  a_{\tilde{\phi}}(z).
\]
Since $\tau(z)$ is close to 1, this orbit is either the $\beta$-th iterate of the orbit of $R_{\lambda}$ corresponding to some singular orbit of $R_{\lambda_0}$ of minimal period $1/\beta$ other than $\gamma_1$, or is an orbit of minimal period $\tau(z)$ bifurcating from the set of regular orbits of $R_{\lambda_0}$. In both cases, its presence implies that $\tau_{k_0}(\lambda) \leq \tau(z)$ and by (\ref{intro2}) we find
\[
\begin{split}
\rho_{k_0}(\lambda) &= \frac{\tau_{k_0}(\lambda)^2}{\vol(Y,\lambda)} \leq \frac{\tau(z)^2}{\vol(Y,\lambda)} = \frac{(1+a_{\tilde\phi}(z))^2}{\vol(Y,\lambda)} 
=\frac{1+2a_{\tilde\phi}(z)+a_{\tilde\phi}(z)^2}{\vol(Y,\lambda)} \\ &\leq
\frac{1+\frac{\vol(Y,\lambda)}{\vol(Y,\lambda_0)}-1}{\vol(Y,\lambda)}
= \frac{1}{\vol(Y,\lambda_0)} = \frac{\tau_{k_0}(\lambda_0)^2}{\vol(Y,\lambda_0)} = \rho_{k_0}(\lambda_0).
\end{split}
\]
This shows that $\lambda_0$ is a local maximizer of $\rho_{k_0}$ in the $C^3$-topology. Finally, if this inequality is an equality, then the equality holds in (\ref{intro2}) and hence $\tilde{\phi}$ is the identity. This implies that $\lambda$ is Besse with regular orbits having minimal period 1, and from the local rigidity of Seifert fibrations and Moser's trick we obtain a diffeomorphism $\theta: Y\rightarrow Y$ such that $\theta^* \lambda = \lambda_0$.
This concludes the sketch of the proof of Theorem~\ref{t:main}. 

\subsection{Organization of the paper}
In Section \ref{s:fixed_point}, we review the notions of flux, action and Calabi invariant for symplectomorphisms of surfaces and prove Theorem \ref{mt:fixed}. In Section \ref{s:sos}, we prove Theorem \ref{mt:surface} and show how the resulting global surface of section survives to small perturbations of the contact form. In Section \ref{s:proof}, we prove Theorem \ref{t:main}.

\subsection{Acknowledgments}
We thank Hansj\"org Geiges and Umberto Hryniewicz for discussions concerning surfaces of section, and Gabriele Benedetti for discussing with us the fixed point theorem in \cite{Benedetti:2021aa}. 

A.\ Abbondandolo and M.\ Mazzucchelli are partially supported by the IEA-International Emerging Actions project IEA00549 from CNRS. A.\ Abbondandolo is also partially supported by the SFB/TRR 191 `Symplectic Structures in Geometry, Algebra and Dynamics', funded by the DFG (Projektnummer 281071066 -- TRR 191).

\section{A fixed point theorem}
\label{s:fixed_point}

In this section, we prove a refinement of a fixed point theorem due to Benedetti-Kang \cite[Section~4.4]{Benedetti:2021aa}. Our version allows us to deal with compact surfaces with possibly disconnected boundary and gives a more precise upper bound on the action of the fixed point, which will play a crucial role in the proof of Theorem~\ref{t:main}.

\subsection{Preliminaries: action, flux, and Calabi homomorphism}
Before stating the theorem, we review some facts about the action of exact symplectomorphisms, the flux and the Calabi homomorphism in a setting which is slightly different than the one considered in classical references such as \cite{Calabi:1970,Banyaga:1978,Banyaga:1997,McDuff:1998uu}.

Throughout this section, we consider a compact connected surface $\Sigma$ with non-empty boundary and an exact two-form $\omega$ on $\Sigma$ which is symplectic (i.e.\ nowhere vanishing) in the interior of $\Sigma$. In the fixed points theorem below, we will assume that $\omega$ vanishes on the boundary of $\Sigma$ in a certain precise way, but in order to introduce the objects this theorem is about we do not need this assumption. As we shall see in Section~\ref{s:sos_Besse}, allowing symplectic forms to vanish on the boundary is important when dealing with global surfaces of section of Reeb flows, see also \cite{Abbondandolo:2018fb,Benedetti:2021aa} and, for a more general approach in any dimension, the theory of ideal Liouville domains in \cite{Giroux:2020}.

By a symplectomorphism of $(\Sigma,\omega)$ we mean a diffeomorphism $\phi: \Sigma \rightarrow \Sigma$ such that $\phi^* \omega = \omega$. In other words, $\phi$ is a diffeomorphism of $\Sigma$ which restricts to a symplectomorphism of the open symplectic manifold $\mathrm{int}(\Sigma)$. 

Let $\{\phi_t\}_{t\in [0,1]}$ be an isotopy on $\Sigma$ starting at the identity; we will always tacitly require that every $\phi_t:\Sigma\to\Sigma$ is surjective (i.e.\ a diffeomorphism, and not simply an embedding). We denote by $X_t$ the generating vector field, which is uniquely determined by the equation
\[
\frac{d}{dt} \phi_t = X_t\circ\phi_t.
\]
The isotopy $\{\phi_t\}$ consists of symplectomorphisms if and only if the one-form $X_t \lrcorner \,\omega$ is closed for every $t\in [0,1]$. When these one-forms are exact, i.e.\
\[
X_t \lrcorner \,\omega = dH_t, \qquad \forall t\in [0,1],
\]
for some $H\in C^{\infty}([0,1]\times \Sigma)$, then $X_t$ is called a Hamiltonian vector field, $\{\phi_t\}$ a Hamiltonian isotopy, and $H$ a generating Hamiltonian. Generating Hamiltonians are uniquely defined up to the addition of a function of $t$. The fact that $X_t$ is tangent to the boundary of $\Sigma$ forces each $H_t$ to be constant on each boundary component. By adding a suitable function of time, we could assume that $H_t$ vanishes on a chosen component of the boundary of $\Sigma$, but in general $H_t$ will not necessarily vanish on the other components.

Note that a smooth function $H:\Sigma\to\R$ defines a vector field on the interior of $\Sigma$ through the identity $X  \lrcorner \,\omega = dH$, but in general one needs further assumptions on $H$ in order to guarantee that $X$ extends smoothly to the boundary of $\Sigma$. This will not be a reason of concern for us here, as we will construct Hamiltonians from vector fields and not the other way around.

A symplectomorphism $\phi: \Sigma \rightarrow \Sigma$ is said to be Hamiltonian if $\phi=\phi_1$ for some Hamiltonian isotopy $\{\phi_t\}$. If $\{\phi_t\}$ and $\{\psi_t\}$ are Hamiltonian isotopies generated by the vector fields $X_t$ and $Y_t$ with Hamiltonians $H_t$ and $K_t$, then the composition $\{\psi_t \circ \phi_t\}$ is generated by the vector field $Y_t + (\psi_t)_* X_t$, which is Hamiltonian with generating Hamiltonian 
\begin{equation}
\label{hamprod}
K_t + H_t \circ \psi_t^{-1}.
\end{equation}
Therefore, Hamiltonian diffeomorphisms form a group, which we denote by \linebreak $\Ham(\Sigma,\omega)$. Note that we are not requiring the diffeomorphisms in $\Ham(\Sigma,\omega)$ to be supported in the interior of $\Sigma$.

Every Hamiltonian diffeomorphism $\phi$ is exact, meaning that the one-form $\phi^* \nu - \nu$ is exact for one (and hence any) primitive $\nu$ of $\omega$. Indeed, every isotopy $\phi_t: \Sigma \rightarrow \Sigma$ with $\phi_0=\mathrm{id}$ is Hamiltonian if and only if it is exact for every $t$, see \cite[Proposition 9.3.1]{McDuff:1998uu}. A function $a: \Sigma \rightarrow \R$ satisfying
\[
\phi^* \nu - \nu = da
\]
is called action of the Hamiltonian diffeomorphism $\phi$ with respect to the primitive $\nu$ of $\omega$. Once a primitive of $\omega$ has been fixed, the action is uniquely determined up to an additive constant. If $\phi=\phi_1$ where $\{\phi_t\}$ is a Hamiltonian isotopy with generating Hamiltonian $H_t$, then the formula
\begin{equation}
\label{canaction}
a_{H,\nu}(z) := \int_{\{t\mapsto \phi_t(z)\}} \nu + \int_0^1 H_t \bigl(\phi_t(z)\bigr)\, dt, \qquad \forall z\in \Sigma,
\end{equation}
defines an action of $\phi$ with respect to $\nu$.   

If $\{\phi_t\}$ is a symplectic isotopy starting at the identity and generated by the vector field $X_t$ and $\gamma: [0,1] \rightarrow \Sigma$ a smooth curve, the flux of $\{\phi_t\}$ through $\gamma$ is defined as the symplectic area swept out by the path $\gamma$ under the isotopy $\{\phi_t\}$, i.e.\ the quantity
\[
\begin{split}
\Flux(\{\phi_t\})(\gamma) &:= \int_{[0,1]\times [0,1]} h^* \omega = \int_0^1 \int_0^1 \omega\bigl(X_t(\phi_t(\gamma(s))),d\phi_t(\gamma(s)) [ \dot{\gamma}(s) ] \bigr)\, ds \, dt \\ &=   \int_0^1 \int_{[0,1]} \gamma^* \bigl( (\phi_t^* X_t) \,\lrcorner\, \omega \bigr)\, dt, 
\end{split}
\]
where $h(t,s) := \phi_t(\gamma(s))$ and in the last identity we have used the fact that the diffeomorphisms $\phi_t$ are symplectic. The fact that the one-forms $(\phi_t^*X_t)\lrcorner\,\omega$ are closed implies that $\Flux(\{\phi_t\})(\gamma)$ only depends on the homotopy class of $\gamma$ relative to the endpoints, or on the free homotopy class of the closed curve $\gamma$.

Moreover, if $\nu$ is a primitive of $\omega$ we find by Stokes theorem
\[
\Flux(\{\phi_t\})(\gamma) = \int_{\gamma} (\phi_1^* \nu-\nu) + \int_{\{t \mapsto \phi_t(\gamma(0))\}} \nu -   \int_{\{t \mapsto \phi_t(\gamma(1))\}} \nu.
\]
The above identity shows that if $\gamma$ is a curve joining two points on the boundary of $\Sigma$, then $\Flux(\{\phi_t\})(\gamma)$ does not vary under homotopies of $\{\phi_t\}$ fixing the endpoints. If $\gamma$ is a closed curve, then $\Flux(\{\phi_t\})(\gamma)$ depends only on the homology class of the closed one-form $\phi_1^* \nu - \nu$. In particular, $\Flux(\{\phi_t\})(\gamma)$ vanishes on closed curves when $\phi_1$ is Hamiltonian. Actually, any symplectic isotopy with vanishing flux through every closed curve is homotopic to a Hamiltonian isotopy, see \cite[Theorem 10.2.5]{McDuff:1998uu}.

When the isotopy $\{\phi_t\}$ is Hamiltonian with generating Hamiltonian $H_t$, we find the identity
\[
 \Flux(\{\phi_t\})(\gamma) = \int_0^1 H_t(\phi_t(\gamma(1)))\, dt -  \int_0^1 H_t(\phi_t(\gamma(0)))\, dt.
\]
If $\gamma$ is a curve connecting two boundary points, we have
\begin{equation}
\label{flux1}
 \Flux(\{\phi_t\})(\gamma) = \int_0^1 H_t(C_1)\, dt -  \int_0^1 H_t(C_0)\, dt,
\end{equation}
where $C_0$ and $C_1$ are the connected components of $\partial \Sigma$ containing the points $\gamma(0)$ and $\gamma(1)$, respectively, and $H_t(C)$ denotes the common value of $H_t$ on the component $C\subset \partial \Sigma$ (recall that each $H_t$ is constant on every boundary component).

We denote by 
\[
\pi: \widetilde{\Ham}(\Sigma,\omega) \rightarrow \Ham(\Sigma,\omega)
\]
the universal cover of $\Ham(\Sigma,\omega)$. The group $\Ham(\Sigma,\omega)$ is endowed with the $C^1$ topology which is induced by the inclusion in the space of $C^1$ maps from $\Sigma$ to itself. 
The $C^1$ topology on $\Ham(\Sigma,\omega)$ induces a $C^1$ topology on $\widetilde{\Ham}(\Sigma,\omega)$ so that, with respect to these topologies, the covering map $\pi$ is a local homeomorphism.
As usual, we identify the elements of $\widetilde{\Ham}(\Sigma,\omega)$
with homotopy classes with fixed endpoints of Hamiltonian isotopies $\{\phi_t\}$ starting at the identity, so that $\pi([\{\phi_t\}])=\phi_1$.
By the invariance of the flux under homotopies with fixed endpoints of the isotopy and (\ref{flux1}), we deduce that the flux induces a map
\begin{gather*}
\widetilde{\Flux} : \widetilde{\Ham}(\Sigma,\omega) \times H_0(\partial \Sigma)^2 \rightarrow \R, 
\\
\widetilde{\Flux}([\{\phi_t\}],C_0,C_1) = \int_0^1 H_t(C_1)\, dt -  \int_0^1 H_t(C_0)\, dt, 
\end{gather*}
which for any pair $(C_0,C_1)$ restricts to a homomorphism from $\Ham(\Sigma,\omega)$ to $\R$, thanks to the form (\ref{hamprod}) of the Hamiltonian generating the product of two Hamiltonian isotopies. 

\begin{rem}
The above considerations can be restated slightly more abstractly by seeing the flux as a homomorphism from the universal cover of the identity component of the symplectomorphism group of $\Sigma$ to $H^1(\Sigma,\partial \Sigma)$. See \cite[Section 10.2]{McDuff:1998uu} for the case of a closed symplectic manifold.
\hfill\qed
\end{rem}

We shall be particularly interested in the subgroup $\widetilde{\Ham}_0(\Sigma,\omega)$ of $\widetilde\Ham(\Sigma,\omega)$ consisting of Hamiltonian isotopies whose flux through any curve with endpoints on the boundary of $\Sigma$ vanishes, i.e.\
\[
\widetilde{\Ham}_0(\Sigma,\omega) := \Big\{ \tilde{\phi} \in \widetilde{\Ham}(\Sigma,\omega) \ \Big|\  \widetilde{\Flux} \bigl(\tilde{\phi}, C_0,C_1 \bigr) = 0 \quad \forall C_0,C_1 \in H_0(\partial \Sigma) \Big\}.
\]
This is a normal subgroup of $\widetilde{\Ham}(\Sigma,\omega)$ and a proper subgroup whenever $\partial \Sigma$ has more than one connected component. 

\begin{rem}
\label{whenflux0}
 An element $\tilde{\phi}$ of $\widetilde{\Ham}(\Sigma,\omega)$ belongs to $\widetilde{\Ham}_0(\Sigma,\omega)$ if and only if we can normalize the Hamiltonian $H_t$ generating any isotopy $\{\phi_t\}$ representing $\tilde{\phi}$ by requiring
\begin{equation}
\label{norm1}
\int_0^1 H_t(z)\, dt = 0, \qquad \forall z\in \partial \Sigma.
\end{equation}
Similarly, $\tilde{\phi}=[\{\phi_t\}]$ belongs to $\widetilde{\Ham}_0(\Sigma,\omega)$ if and only if we can normalize the action $a$ of $\phi_1$ with respect to any primitive $\nu$ of $\omega$ by requiring
\begin{equation}
\label{norm2}
a(z) = \int_{\{t\mapsto \phi_t(z)\}} \nu, \qquad \forall z\in \partial \Sigma.
\end{equation}
Indeed, (\ref{norm2}) corresponds to the choice $a=a_{H,\nu}$ of (\ref{canaction}), where $H$ is normalized as in (\ref{norm1}).
\hfill\qed
\end{rem}

When $\tilde{\phi}$ belongs to $\widetilde{\Ham}_0(\Sigma,\omega)$ and $\nu$ is a primitive of $\omega$, we shall denote by
\[
a_{\tilde{\phi},\nu} : \Sigma \rightarrow \R
\]
the action of $\pi(\tilde\phi)$ normalized as in~\eqref{norm2}. As the notation suggests, this action does not depend on the choice of the Hamiltonian isotopy representing $\tilde{\phi}$.

If $\tilde{\phi} = [\{\phi_t\}]$ and $\tilde{\psi} = [\{\psi_t\}]$ are in $\widetilde{\Ham}_0(\Sigma,\omega)$ and $\nu$ is any primitive of $\omega$, we have the identity
\begin{equation}
\label{composition}
a_{\tilde{\psi} \circ \tilde{\phi}, \nu} = a_{\tilde{\phi}, \psi_1^* \nu} + a_{\tilde{\psi},\nu}.
\end{equation}
Indeed, one readily checks that the function $a:=a_{\tilde{\phi}, \psi_1^* \nu} + a_{\tilde{\psi},\nu}$ satisfies 
\begin{align*}
&da = (\psi_1 \circ \phi_1)^* \nu - \nu,\\
&a(z) = \int_{\{t\mapsto \psi_t(\phi_t(z))\}}\!\!\!\nu, \qquad \forall z\in \partial \Sigma. 
\end{align*}

A fixed point $z$ of $\tilde{\phi}=[\{\phi_t\}]\in \widetilde{\Ham}(\Sigma,\omega)$ is by definition a fixed point of the map $\phi_1$. Such a fixed point is said to be contractible if the loop $t\mapsto \phi_t(z)$ is contractible in $\Sigma$. The latter condition is clearly independent on the choice of the Hamiltonian isotopy representing $\tilde{\phi}$.

The normalized action $a_{\tilde{\phi},\nu}(z)$ of any contractible fixed point $z$ of $\tilde{\phi}\in \widetilde{\mathrm{Ham}}_0(\Sigma,\omega)$ is independent on the choice of the primitive $\nu$.   Indeed, if $\tilde\phi=[\{\phi_t\}]$ and $H_t$ is the Hamiltonian normalized by \eqref{norm1} generating $\phi_t$, then the identity $a_{\tilde{\phi},\nu}=a_{H,\nu}$ and Stokes' theorem imply
\begin{align}
\label{e:action_nu}
a_{\tilde{\phi},\nu}(z) = \int_{\mathbb{D}} u^* \omega + \int_0^1 H_t(\phi_t(z))\, dt,
\end{align}
where $u: \mathbb{D}\rightarrow \Sigma$ is a capping of the contractible closed curve $t\mapsto \phi_t(z)$. In~\eqref{e:action_nu}, the dependence on $\nu$ disappears. Therefore, we shall denote the normalized action of the contractible fixed point $z$ of $\tilde{\phi}$ simply as $a_{\tilde{\phi}}(z)$. 

Finally, we define the Calabi homomorphism
\[
\Cal: \widetilde{\Ham}_0(\Sigma,\omega) \rightarrow \R,\qquad
\Cal(\tilde{\phi}) := \int_{\Sigma} a_{\tilde{\phi},\nu} \, \omega = 2 \int_0^1\left( \int_{\Sigma} H_t\, \omega \right)\, dt.
\]
The equality of the above two expressions is proven in the lemma below. 
Notice that the above double representation implies that $\Cal(\tilde{\phi})$ is independent of the choice of the primitive of $\nu$ defining the normalized action $a_{\tilde{\phi},\nu}$ and of the choice of the Hamiltonian isotopy representing $\tilde{\phi}$ and defining $H_t$. 
The fact that $\Cal$ is a homomorphism can be proven by either using the representation in terms of action together with (\ref{composition}), or the Hamiltonian representation together with (\ref{hamprod}). 

\begin{lem}
\label{l:Calabi}
For each $\tilde{\phi}=[(\phi_t)]\in\widetilde{\Ham}_0(\Sigma,\omega)$, if $H_t$ is the Hamiltonian normalized by (\ref{norm1}) generating the isotopy $\phi_t$, we have
\begin{align*}
\int_{\Sigma} a_{\tilde{\phi},\nu} \, \omega = 2 \int_0^1\left( \int_{\Sigma} H_t\, \omega \right)\, dt. 
\end{align*}
\end{lem}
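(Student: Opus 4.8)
The plan is to compute $\int_\Sigma a_{\tilde\phi,\nu}\,\omega$ by differentiating in time and showing that the time-derivative of the relevant integral equals $2\int_\Sigma H_t\,\omega$ (after the normalization \eqref{norm1}), then integrating back from $t=0$ to $t=1$. Concretely, for $s\in[0,1]$ let $\tilde\phi^s:=[\{\phi_{ts}\}_{t\in[0,1]}]$, i.e. the truncated isotopy run up to time $s$, and consider the function
\[
F(s):=\int_\Sigma a_{\tilde\phi^s,\nu}\,\omega .
\]
One has $F(0)=0$ since $a_{\mathrm{id},\nu}\equiv 0$ with the normalization \eqref{norm2}, and the claim is $F'(s)=2\int_\Sigma H_s\,\omega$ for all $s$; integrating this over $s\in[0,1]$ gives the lemma. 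To differentiate $F$, I would use the formula \eqref{canaction} for the action, which under the normalization \eqref{norm1} reads
\[
a_{\tilde\phi^s,\nu}(z)=\int_0^s\!\Big(\nu\big(X_t(\phi_t(z))\big)+H_t(\phi_t(z))\Big)\,dt,
\]
so that $\frac{d}{ds}a_{\tilde\phi^s,\nu}(z)=\nu\big(X_s(\phi_s(z))\big)+H_s(\phi_s(z))$. Integrating this over $(\Sigma,\omega)$ and changing variables via the symplectomorphism $\phi_s$ gives
\[
F'(s)=\int_\Sigma \Big(\nu(X_s)+H_s\Big)\,\omega .
\]

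It remains to show $\int_\Sigma \nu(X_s)\,\omega=\int_\Sigma H_s\,\omega$ for each fixed $s$. This is the computational heart of the argument. Writing $\omega=d\nu$ and using $X_s\lrcorner\,\omega=dH_s$, I would expand
\[
\int_\Sigma \nu(X_s)\,\omega
=\int_\Sigma \big(X_s\lrcorner\,\nu\big)\,d\nu .
\]
The identity I want follows from a Cartan-calculus manipulation: since $\nu(X_s)$ is a function, $d\big(\nu(X_s)\,\nu\big)=d(\nu(X_s))\wedge\nu+\nu(X_s)\,d\nu$, so
\[
\int_\Sigma \nu(X_s)\,\omega
=\int_\Sigma d\big(\nu(X_s)\,\nu\big)-\int_\Sigma d(\nu(X_s))\wedge\nu .
\]
By Stokes, the first term on the right equals $\int_{\partial\Sigma}\nu(X_s)\,\nu$. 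For the second term, note $d(\nu(X_s))=\Lie_{X_s}\nu-X_s\lrcorner\,d\nu=\Lie_{X_s}\nu-dH_s$, hence $d(\nu(X_s))\wedge\nu=(\Lie_{X_s}\nu)\wedge\nu-dH_s\wedge\nu$. Then $dH_s\wedge\nu=d(H_s\nu)-H_s\,d\nu=d(H_s\nu)-H_s\,\omega$, and $(\Lie_{X_s}\nu)\wedge\nu=\tfrac12\Lie_{X_s}(\nu\wedge\nu)=0$ since $\nu\wedge\nu=0$ in dimension $2$. Putting these together,
\[
\int_\Sigma d(\nu(X_s))\wedge\nu=-\int_\Sigma d(H_s\nu)+\int_\Sigma H_s\,\omega
=-\int_{\partial\Sigma}H_s\,\nu+\int_\Sigma H_s\,\omega .
\]
Combining the two displays:
\[
\int_\Sigma \nu(X_s)\,\omega
=\int_{\partial\Sigma}\nu(X_s)\,\nu+\int_{\partial\Sigma}H_s\,\nu-\int_\Sigma H_s\,\omega .
\]
Comparing with the target identity $\int_\Sigma\nu(X_s)\,\omega=\int_\Sigma H_s\,\omega$, we need the boundary contribution $\int_{\partial\Sigma}\big(\nu(X_s)+H_s\big)\nu$ to equal $2\int_\Sigma H_s\,\omega$; but more cleanly, plugging this back into $F'(s)=\int_\Sigma(\nu(X_s)+H_s)\,\omega$ yields
\[
F'(s)=2\int_\Sigma H_s\,\omega+\int_{\partial\Sigma}\big(\nu(X_s)+H_s\big)\,\nu ,
\]
so the lemma reduces to showing the boundary integral vanishes.

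The main obstacle, then, is exactly this boundary term $\int_{\partial\Sigma}\big(\nu(X_s)+H_s\big)\,\nu$, and here is where the two hypotheses enter. Along each boundary circle $C\subset\partial\Sigma$, the vector field $X_s$ is tangent to $C$, so $X_s\lrcorner\,\nu$ restricted to $C$ is the pairing of $\nu|_C$ with the tangent vector $X_s$, i.e. $\int_C \nu(X_s)\,\nu$ is governed by the restriction of $\nu$ to $C$. I would argue that, because $\omega$ vanishes on $\partial\Sigma$ (the standing hypothesis of the fixed-point theorem, which is in force for this lemma since $\tilde\phi\in\widetilde{\Ham}_0$ presupposes that setting), the one-form $\nu$ restricted to each boundary component is closed — indeed $d(\nu|_C)=(d\nu)|_C=\omega|_C=0$ — and, crucially, the normalization \eqref{norm1} forces $\int_0^1 H_t(C)\,dt=0$ for every component $C$; however since we are differentiating at a single time $s$ rather than integrating, I instead track the full integral $\int_0^1 F'(s)\,ds$ and show $\int_0^1\!\int_{\partial\Sigma}\big(\nu(X_s)+H_s\big)\nu\,ds=0$. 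For the $H$-part this is immediate: $\int_0^1 H_s(C)\,ds=0$ by \eqref{norm1} and $\int_C\nu$ is a constant, so $\int_0^1\!\int_C H_s\,\nu\,ds=\big(\int_0^1 H_s(C)\,ds\big)\int_C\nu=0$. For the $\nu(X_s)$-part, along $C$ the isotopy $\phi_t$ restricts to an isotopy of the circle $C$; parametrizing $C$ and writing $\phi_t|_C$ in terms of an angular lift, $\int_0^1\!\int_C\nu(X_s)\,\nu\,ds$ telescopes into the flux of $\{\phi_t|_C\}$ paired against $\nu|_C$ over the closed loop $C$, which by the very definition of $\widetilde{\Ham}_0$ (vanishing flux through every boundary curve, closed ones included as boundaries of the surface) is zero. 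Assembling everything, $\int_0^1 F'(s)\,ds=2\int_0^1\!\big(\int_\Sigma H_s\,\omega\big)ds$, and since $F(1)-F(0)=\int_\Sigma a_{\tilde\phi,\nu}\,\omega$ with $F(0)=0$, the lemma follows. I expect the bookkeeping of the boundary terms — making precise that they integrate to zero rather than vanish pointwise, and invoking the zero-flux condition correctly — to be the only genuinely delicate point; the interior computation is a routine two-dimensional Cartan-calculus exercise.
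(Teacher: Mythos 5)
Your overall strategy (compute $a_{\tilde\phi,\nu}$ from the formula \eqref{canaction}, integrate over $\Sigma$ using $\phi_t^*\omega=\omega$, and convert the $\nu(X_t)$ integral to a Stokes argument) is essentially the paper's, and your framing via $F(s)$ is just a re-packaging of the same Fubini step. But the Cartan-calculus computation you do on the interior term has a genuine error, and the algebra at the end is inconsistent with your own intermediate identities.

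The false step is ``$(\Lie_{X_s}\nu)\wedge\nu=\tfrac12\Lie_{X_s}(\nu\wedge\nu)=0$''. The Leibniz rule gives $\Lie_X(\nu\wedge\nu)=(\Lie_X\nu)\wedge\nu+\nu\wedge(\Lie_X\nu)$, and since $\Lie_X\nu$ is again a $1$-form, the second summand equals $-(\Lie_X\nu)\wedge\nu$; so $\Lie_X(\nu\wedge\nu)=0$ identically regardless of $\nu$, and the identity tells you nothing about $(\Lie_X\nu)\wedge\nu$. It is in fact nonzero in general: take $\omega=dx\wedge dy$, $\nu=\tfrac12(x\,dy-y\,dx)$, $X=\partial_x$; then $\Lie_X\nu=\tfrac12\,dy$ and $(\Lie_X\nu)\wedge\nu=\tfrac14 y\,dx\wedge dy\neq0$. (Indeed, substituting $\Lie_{X_s}\nu=d(\nu(X_s))+dH_s$ into your own chain of identities reveals that the derivation is circular, and the vanishing of $(\Lie_{X_s}\nu)\wedge\nu$ is precisely what would make it non-tautological.) There is also a bookkeeping slip at the end: combining your own displays gives $F'(s)=\int_{\partial\Sigma}(\nu(X_s)+H_s)\,\nu$ and not $2\int_\Sigma H_s\,\omega+\int_{\partial\Sigma}(\nu(X_s)+H_s)\,\nu$, so the term $2\int_\Sigma H_s\,\omega$ that you need has been inserted rather than derived.

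The clean way to do the interior step --- and this is the one-line observation the paper uses --- is the pointwise identity $(X_s\lrcorner\nu)\,\omega=\nu\wedge(X_s\lrcorner\omega)=\nu\wedge dH_s$, which holds because $\nu\wedge\omega$ is a $3$-form on a surface, hence zero, so contracting it with $X_s$ and rearranging gives the claim. With this, $\int_\Sigma\nu(X_s)\,\omega=\int_\Sigma\nu\wedge dH_s=\int_\Sigma H_s\,\omega-\int_{\partial\Sigma}H_s\,\nu$, so $F'(s)=2\int_\Sigma H_s\,\omega-\int_{\partial\Sigma}H_s\,\nu$; no Lie derivatives appear, no extra boundary term $\int_{\partial\Sigma}\nu(X_s)\,\nu$ arises, and the only boundary term dies under the normalization \eqref{norm1} after integrating in $s$. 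Your appeal to zero flux to handle $\int_0^1\!\int_{\partial\Sigma}\nu(X_s)\,\nu\,ds$ is a red herring: that term is an artifact of the erroneous step and should not be present.
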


\begin{proof}
 From the identity $a_{\tilde{\phi},\nu}=a_{H,\nu}$ we find
\begin{align*}
\int_{\Sigma} a_{\tilde{\phi},\nu} \, \omega 
&= \int_{\Sigma} \left( \int_0^1 \bigl( X_t \lrcorner \, \nu + H_t \bigr) \circ \phi_t\, dt \right) \, \omega = \int_0^1 \left(  \int_{\Sigma}  \bigl( X_t \lrcorner \, \nu + H_t \bigr) \circ \phi_t\, \omega \right) \, dt \\ 
&= \int_0^1 \left(  \int_{\Sigma}  \bigl( X_t \lrcorner \, \nu + H_t \bigr) \, \omega \right) \, dt 
=  \int_0^1 \left(  \int_{\Sigma} \nu \wedge dH_t + H_t\, \omega \right)\, dt,
\end{align*}
where we have used the fact that $\phi_t$ preserves $\omega$, and the identity $(X_t \lrcorner \, \nu)\omega =  \nu \wedge dH_t$. By Stokes theorem, we find
\begin{align*}
\int_0^1 \left(  \int_{\Sigma} \nu \wedge dH_t \right)\, dt & = \int_0^1   \left(  \int_{\Sigma} \bigl( H_t\, d\nu - d(H_t \nu) \bigr) \right)\, dt \\ 
& = \int_0^1\left( \int_{\Sigma} H_t\, \omega \right)\, dt - \int_0^1 \left( \int_{\partial\Sigma} H_t \nu \right)\, dt, 
\end{align*}
and the latter integral vanishes thanks to the normalization condition (\ref{norm1}):
\[
\int_0^1 \left( \int_{\partial\Sigma} H_t \nu \right)\, dt = \sum_{C\in\pi_0(\partial\Sigma)} \left( \int_0^1 H_t(C) \, dt \right) \left( \int_C \nu \right) = 0.
\qedhere
\]
\end{proof}

\subsection{The fixed point theorem.}\label{ss:fixed_point_thm}
We now prescribe the way in which the two-form $\omega$, which is assumed to be symplectic in the interior of $\Sigma$, vanishes on the boundary: 

\begin{ass} 
\label{assumption}
Every connected component $C$ of the boundary $\partial \Sigma$ has a collar neighborhood $A_C\subset \Sigma$ and an identification $A_C\equiv [0,\rho) \times S^1$, for some $\rho>0$ such that
\[
\omega|_{A_C} = -r\, dr\wedge ds.
\]
Here, we are identifying $S^1$ with $\R/\Z$, and $(r,s)$ denotes a point in $[0,\rho) \times S^1$. Note that the orientation of $\partial \Sigma$ as boundary of the oriented surface $(\Sigma,\omega)$ coincides, under the above identification of each component $C\subset \partial \Sigma$ with $\{0\} \times S^1$, with the orientation given by $ds$. 
\hfill\qed
\end{ass}

The main result of this section is the following fixed point theorem, which is stated as Theorem \ref{mt:fixed} in the Introduction and in which we are denoting by
\[
\widehat{\Cal} (\tilde{\phi}) := \frac{\Cal(\tilde{\phi})}{\mathrm{area}(\Sigma,\omega)}
\]
the normalized Calabi invariant of $\tilde\phi\in \widetilde{\Ham}_0(\Sigma,\omega)$. 

\begin{thm}
\label{t:fixed_point}
Assume that the exact two-form $\omega$ on the compact surface $\Sigma$ is symplectic on $\mathrm{int}(\Sigma)$ and satisfies Assumption \ref{assumption}.
For every $c> 0$ there exists a $C^1$-neighborhood $\mathcal{U}$ of the identity in $\widetilde{\Ham}_0(\Sigma,\omega)$ such that every $\tilde{\phi}$ in $\mathcal{U}$ with $\Cal(\tilde{\phi})\leq 0$ has a contractible fixed point $z\in \mathrm{int}(\Sigma)$ whose normalized action satisfies
\begin{equation}
\label{actionbound}
a_{\tilde{\phi}}(z) + c\, a_{\tilde{\phi}}(z)^2 \leq  \frac{1}{2}\,  \widehat{\Cal}(\tilde{\phi}),
\end{equation}
with equality if and only if $\tilde{\phi}$ is the identity.
\end{thm}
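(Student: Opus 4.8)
The strategy is to adapt the fixed point argument of Benedetti--Kang \cite{Benedetti:2021aa} (which itself refines \cite{Abbondandolo:2018fb}), upgrading the linear action bound to the claimed quadratic one. First I would reduce to the case where the Calabi invariant is negative, since if $\Cal(\tilde\phi)=0$ for $\tilde\phi$ close to the identity, one expects (by the rigidity part of the Benedetti--Kang argument, or by a direct variational argument) that $\tilde\phi$ is the identity, and then every point is a contractible fixed point with $a_{\tilde\phi}\equiv 0$, giving equality in~\eqref{actionbound}. So assume $\Cal(\tilde\phi)<0$, and fix a Hamiltonian isotopy $\{\phi_t\}$ representing $\tilde\phi$ with generating Hamiltonian $H_t$ normalized by~\eqref{norm1}; then $\int_0^1\!\!\int_\Sigma H_t\,\omega = \tfrac12\Cal(\tilde\phi)<0$.

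\textbf{Key steps.} The heart of the matter is a min-max / action-selector construction. One considers the action functional on the space of contractible loops (or, in the near-the-identity regime, on the graph of $\tilde\phi$ inside $\Sigma\times\Sigma$ with its split symplectic form), whose critical points are exactly the contractible fixed points of $\tilde\phi$ with critical value $a_{\tilde\phi}$. Because $\tilde\phi$ is $C^1$-close to the identity, a generating-function or finite-dimensional reduction (as in \cite{Abbondandolo:2018fb, Benedetti:2021aa}) makes this into a genuine critical point problem on a finite-dimensional manifold, and one must be careful with the boundary behavior of $\omega$ — Assumption~\ref{assumption} provides the model $\omega = -r\,dr\wedge ds$ near each boundary circle, which is what allows the reduction to extend up to $\partial\Sigma$ and forces the selected fixed point to lie in $\mathrm{int}(\Sigma)$ rather than escaping to the boundary (where, thanks to the zero-flux condition defining $\widetilde\Ham_0$, the action would be controlled but not necessarily small). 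The new input is to pick not the min-max value but the \emph{minimum} of $a_{\tilde\phi}$ over contractible fixed points — call it $z$ with $a:=a_{\tilde\phi}(z)$. Then one needs two estimates: a lower bound $a \geq -C\|\tilde\phi - \mathrm{id}\|_{C^1}$ coming from the near-the-identity regime (so $a$ is as small as we like in absolute value once $\mathcal U$ is shrunk), and the crucial \emph{comparison between the minimal action and the averaged action}. The averaged action is $\tfrac{1}{\area(\Sigma,\omega)}\int_\Sigma a_{\tilde\phi,\nu}\,\omega = \widehat\Cal(\tilde\phi)$. Since $a$ is the minimum, trivially $a\leq \widehat\Cal(\tilde\phi)$; to get the factor $\tfrac12$ and the quadratic correction, I would integrate the identity $da_{\tilde\phi,\nu} = \phi_1^*\nu - \nu$ along the isotopy and use a Poincaré-type inequality relating the $L^2$-oscillation of $a_{\tilde\phi,\nu}$ to that of $H_t$, together with the sign $\int H_t\,\omega<0$; quantitatively, write $a_{\tilde\phi,\nu}(z) = \widehat\Cal(\tilde\phi) - \mathrm{osc}$ where the oscillation term, by a second-order Taylor expansion of the action functional at its minimum (the Hessian being positive semidefinite there), dominates a multiple of $a^2$ modulo higher order — this is where the constant $c$ (absorbed into the $C^1$-smallness of $\mathcal U$, which can depend on $c$) enters, and where the sharp $\tfrac12$ comes from the fact that in the Zoll/model case the action functional is, to leading order, quadratic with the averaged value sitting exactly halfway.

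\textbf{Main obstacle.} The delicate point is making the last estimate quantitative and sharp simultaneously: one must show $a + c\,a^2 \leq \tfrac12\widehat\Cal$ with the constant $\tfrac12$ \emph{exactly} right and with the quadratic term having the correct sign, which requires a genuinely second-order analysis of the reduced action functional near its minimum rather than the first-order comparison that suffices for the linear bound in \cite{Abbondandolo:2018fb, Benedetti:2021aa}. Concretely, one expands the generating function of $\tilde\phi$ to second order, identifies the minimal action $a$ and the integral $\int a_{\tilde\phi,\nu}\,\omega$ as, respectively, the minimum and (twice) the ``energy'' of a nearly-quadratic functional, and checks that the discrepancy between $2a$ and $\widehat\Cal$ is a positive-definite quadratic form in the perturbation, which controls $a^2$ up to a factor that $\to 0$ as $\mathcal U$ shrinks — so that absorbing it costs exactly the freedom to enlarge $c$ while shrinking $\mathcal U$. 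Handling the boundary collars uniformly in this second-order expansion, and checking the equality case (which forces the perturbation to vanish identically, hence $\tilde\phi=\mathrm{id}$), are the remaining technical hurdles.
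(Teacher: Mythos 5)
Your opening move is the right one and is, in fact, what the paper does: you select the fixed point realizing the minimal action, and you intend to compare this minimal action with the averaged action $\widehat\Cal(\tilde\phi)$. The paper achieves this selection by first proving (Theorem~\ref{t:quasi_autonomous}) that a $C^1$-small exact symplectomorphism of $(\Sigma,\omega)$ is generated by a \emph{quasi-autonomous} Hamiltonian $H_t$, whose time-uniform minimizer $z_{\min}$ is automatically a contractible interior fixed point with $a_{\tilde\phi}(z_{\min})=\min_\Sigma K$, where $K:=\int_0^1 H_t\,dt$. Your ``finite-dimensional reduction / generating function'' language is close in spirit, and the reduction to $z_{\min}$ is the shared backbone.

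Where your proposal diverges — and where I think it would not go through as written — is in the mechanism producing the quadratic correction. You locate the quadratic term in a ``second-order Taylor expansion of the action functional at its minimum,'' i.e.\ a Hessian estimate at $z_{\min}$, together with a ``Poincar\'e-type inequality.'' But the quadratic improvement in~\eqref{actionbound} is not a local effect at the critical point: if $z_{\min}$ sits in the middle of $\Sigma$ there is no a priori lower bound on the Hessian of $K$ there, and no Poincar\'e inequality relating $\min K$ to $\int K\,\omega$ with a sharp constant will come from that alone. The paper's actual source of the quadratic term is global and comes from the \emph{boundary}: Theorem~\ref{t:quasi_autonomous}(ii) shows that, after normalizing $H_t|_{\partial\Sigma}\equiv0$, one has $H_t(r,s)=r^2 h_C(t,r,s)$ with $\|h_C\|_{C^0}<\epsilon$ on each collar $A_C\cong[0,\rho)\times S^1$. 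Hence $K\geq -\epsilon r^2$ near $\partial\Sigma$, while $K\geq -m:=\min_\Sigma K$ everywhere; integrating $\max\{-\epsilon r^2,-m\}$ over the collars gives
\[
\int_\Sigma K\,\omega > -m\,\area(\Sigma,\omega) + N\,\frac{m^2}{4\epsilon},
\]
and since $\int_\Sigma K\,\omega = \tfrac12\Cal(\tilde\phi)$ by Lemma~\ref{l:Calabi}, the choice $\epsilon = N/(4\,\area(\Sigma,\omega)\,c)$ yields exactly~\eqref{actionbound}. In particular, the factor $\tfrac12$ is not a ``quadratic model / midpoint'' heuristic but simply the identity $\Cal(\tilde\phi)=2\int_0^1\!\!\int_\Sigma H_t\,\omega\,dt$, and the quadratic term is an elementary loss estimate in the collar integral, not a Hessian term. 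You should also be careful with ``trivially $a\leq\widehat\Cal(\tilde\phi)$'': the mean of $K$ is $\tfrac12\widehat\Cal$, not $\widehat\Cal$, and the relevant function is $K$ rather than the action $a_{\tilde\phi,\nu}$; getting these two normalizations confused is exactly where the $\tfrac12$ can be lost.

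In short: replace the Hessian/Poincar\'e program with (a) the quasi-autonomous generation result including the boundary normal form $H_t = b_C + r^2 h_C$ with $\|h_C\|_{C^0}$ small, (b) the identity $\int_\Sigma K\,\omega = \tfrac12\Cal(\tilde\phi)$, and (c) the explicit collar integral of $\max\{-\epsilon r^2,-m\}$. The equality case then also falls out cleanly, since the integral inequality is strict unless $H\equiv0$.
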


In particular, Theorem~\ref{t:fixed_point} implies that any $\tilde{\phi} \in \widetilde{\Ham}_0(\Sigma,\omega)\setminus\{\id\}$ which is sufficiently $C^1$-close to the identity and satisfies $\Cal(\tilde{\phi})\leq 0$ has a contractible interior fixed point $z$ with negative action satisfying 
\begin{align}
\label{e:weaker_fix_point}
a_{\tilde{\phi}}(z) <    \frac{1}{2}\,  \widehat{\Cal}(\tilde{\phi}). 
\end{align}
For the special case when $\Sigma$ is the disk, the weaker conclusion~\eqref{e:weaker_fix_point} is deduced in \cite[Corollary~5]{Abbondandolo:2018fb} from a non-perturbative statement. For arbitrary compact surfaces $\Sigma$ having one boundary component, \eqref{e:weaker_fix_point} is proven in \cite[Corollary~4.16]{Benedetti:2021aa}. The more precise bound which we prove here involving the square of the action  turns out to be important in order to prove systolic inequalities for Reeb flows using quite general global surfaces of section (see Remark \ref{newrem} below for more about this).

\begin{rem}
The upper bound (\ref{actionbound}) can be restated as 
\[
a_{\tilde{\phi}}(z) \leq f_c \big(   \widehat{\Cal}(\tilde{\phi}) \big),
\]
where 
\[
f_c(s) := \frac{1}{2c} \left( \sqrt{1+2cs} - 1 \right) = \frac{1}{2} s - \frac{c}{4} s^2 + O(s^3) \qquad \mbox{for } s\rightarrow 0.
\]
As already observed in \cite[Remark 2.21]{Abbondandolo:2018fb}, the constant $\frac{1}{2}$ in front of the linear term in $s$ is optimal, meaning that it cannot be replaced by a larger constant (recall that the argument of $f_c$ is non-positive): the example that is contained there can be easily modified to produce, for every $\eta> \frac{1}{2}$, an element $\tilde{\phi}$ of $\widetilde{\Ham}_0(\Sigma,\omega)$ which is arbitrarily close to the identity in any $C^k$ norm, has negative Calabi invariant but no contractible fixed point $z$ satisfying
\[
a_{\tilde{\phi}}(z) \leq \eta \cdot   \widehat{\Cal}(\tilde{\phi}).
\]
Therefore, (\ref{e:weaker_fix_point}) can be improved only by considering higher order terms in $s$; the bound (\ref{actionbound}) is such an improvement.
\hfill\qed
\end{rem}

\begin{rem}
By applying Theorem~\ref{t:fixed_point} to $\tilde{\phi}^{-1}$, we obtain the following statement: 
\emph{For every $c> 0$ there exists a $C^1$-neighborhood $\mathcal{U}$ of the identity in $\widetilde{\Ham}_0(\Sigma,\omega)$ such that every $\tilde{\phi}$ in $\mathcal{U}$ with $\Cal(\tilde{\phi})\geq 0$ has a contractible fixed point $z\in \mathrm{int}(\Sigma)$ whose normalized action satisfies}
\begin{equation*}
a_{\tilde{\phi}}(z) - c\, a_{\tilde{\phi}}(z)^2 \geq  \frac{1}{2}  \, \widehat{\Cal}(\tilde{\phi}),
\end{equation*}
\emph{with equality if and only if $\tilde{\phi}$ is the identity.}\hfill\qed
\end{rem}

The proof of Theorem~\ref{t:fixed_point} uses quasi-autonomous Hamiltonians: We recall that the time-dependent Hamiltonian $H_t:\Sigma\to\R$ is called quasi-autonomous if there exist $z_{\min},z_{\max}\in\Sigma$ such that
\[
H_t(z_{\min})=\min_{\Sigma} H_t, \quad H_t(z_{\max})=\max_{\Sigma} H_t, \qquad \forall t\in [0,1].
\]
Note that, if the Hamiltonian isotopy $\{\phi_t\}$ is generated by a quasi-autonomous Hamiltonian $H_t$ as above and $z_{\min}$ and $z_{\max}$ belong to the interior $\mathrm{int}(\Sigma)$, then these points are contractible fixed points of $\tilde{\phi}=[\{\phi_t\}]$. 

Exact symplectomorphisms of $\Sigma$ that are $C^1$-close to the identity are generated by a quasi-autonomous Hamiltonian. More precisely, we have the following result.

\begin{thm}
\label{t:quasi_autonomous}
Assume that the exact two-form $\omega$ on the compact surface $\Sigma$ is symplectic on $\mathrm{int}(\Sigma)$ and satisfies Assumption \ref{assumption}. Let $\phi:\Sigma\to\Sigma$ be an exact symplectomorphism that is sufficiently $C^1$-close to the identity. Then there exists a Hamiltonian isotopy $\{\phi_t\}$ from $\mathrm{id}$ to $\phi$ whose generating Hamiltonian $H_t$ is quasi-autonomous. 
Moreover, for every $\epsilon>0$ there exists $\delta>0$ such that, if $\mathrm{dist}_{C^1}(\phi,\mathrm{id})<\delta$, then:
\begin{enumerate}
\item[$(i)$] $\|H_t\|_{C^1}<\epsilon$ and $\mathrm{dist}_{C^1}(\phi_t,\mathrm{id})<\epsilon$ for every $t\in [0,1]$;
\item[$(ii)$] in the collar neighborhood $A_C\equiv [0,\rho)\times S^1$ of each boundary component $C$ of $\Sigma$ as in Assumption \ref{assumption}, the Hamiltonian $H_t$ has the form
\[
H_t(r,s) = b_C + r^2 h_C(t,r,s)
\]
for some real number $b_C$ and some smooth function $h_C: [0,1] \times A \rightarrow \R$ such that $|b_C|<\epsilon$ and $\|h_C\|_{C^0} < \epsilon$.
\end{enumerate}
\end{thm}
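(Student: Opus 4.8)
\textbf{Proof proposal for Theorem \ref{t:quasi_autonomous}.}

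The plan is to construct the isotopy $\{\phi_t\}$ not by the naive geodesic/linear interpolation but via a ``graph'' construction in a neighborhood of the diagonal, exploiting that an exact symplectomorphism $C^1$-close to the identity has a well-defined generating function. First I would fix a Riemannian metric and an almost complex structure adapted to $\omega$ on $\mathrm{int}(\Sigma)$, but more importantly I would work with the primitive $\nu$ of $\omega$: since $\phi$ is exact, $\phi^*\nu-\nu=da$ for a function $a$ on $\Sigma$, uniquely determined once normalized by \eqref{norm2} (which is legitimate because $\phi$, being $C^1$-close to $\mathrm{id}$, lifts canonically into $\widetilde{\Ham}_0(\Sigma,\omega)$ — here I would use Remark \ref{whenflux0} together with the fact that near the identity the flux through boundary-to-boundary curves is small, hence zero after taking the canonical lift). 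The function $a=a_{\tilde\phi,\nu}$ will be $C^1$-small, and it is on this function that the quasi-autonomous Hamiltonian will be modelled.

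The heart of the argument is a \emph{generating function} normal form near the diagonal $\Delta\subset\Sigma\times\Sigma$. On $\mathrm{int}(\Sigma)\times\mathrm{int}(\Sigma)$ with the symplectic form $(-\omega)\oplus\omega$, the diagonal is Lagrangian, and by Weinstein's theorem a neighborhood of $\Delta$ is symplectomorphic to a neighborhood of the zero section in $T^*\Sigma$ (equivalently in $T^*\mathrm{int}(\Sigma)$), via a symplectomorphism that can be chosen to restrict to the identification induced by $\omega$ on $\Delta\cong \Sigma$. Then the graph of an exact symplectomorphism $\phi$ that is $C^1$-close to $\mathrm{id}$ corresponds to the graph of an exact one-form $dS_\phi$ on $\Sigma$, where $S_\phi$ is a $C^1$-small generating function, related to the action $a$ by $S_\phi = a + O(\|a\|_{C^1}^2)$ in suitable coordinates. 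The key point is then that the linear path $t\mapsto t\,dS_\phi$ of exact one-forms corresponds, via Weinstein's identification, to a Hamiltonian isotopy $\{\phi_t\}$ from $\mathrm{id}$ to $\phi$ whose generating Hamiltonian $H_t$ is, to leading order, $S_\phi$ itself — in particular $H_t$ has the \emph{same critical points for all $t$}, namely the critical points of $S_\phi$, which gives quasi-autonomy: the global minimum $z_{\min}$ and global maximum $z_{\max}$ of $S_\phi$ are critical points of $H_t$ for every $t$, and a direct estimate shows they are the global min/max of $H_t$ once everything is $C^1$-small. I would make this precise by an implicit-function-theorem / Moser-type argument so that $H_t$ depends smoothly on $t$ and on $\phi$, with $\|H_t\|_{C^1}\to 0$ and $\mathrm{dist}_{C^1}(\phi_t,\mathrm{id})\to 0$ as $\mathrm{dist}_{C^1}(\phi,\mathrm{id})\to 0$; this gives $(i)$.

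For part $(ii)$, the collar model $\omega|_{A_C}=-r\,dr\wedge ds$ must be respected by the whole construction, which is the technical crux. Here I would argue directly in the collar coordinates $(r,s)$: an exact symplectomorphism $C^1$-close to the identity preserves the boundary circle $\{r=0\}$ and, since $\omega$ degenerates there like $r\,dr\wedge ds$, a Taylor expansion of $\phi$ at $r=0$ together with the condition $\phi^*\omega=\omega$ forces $\phi$ to be, on $\{r=0\}$, a rotation by some small amount and to preserve the $r$-coordinate to first order; correspondingly the action $a$ restricted to the collar satisfies $a(r,s) = b_C + r^2(\dots)$ — the linear-in-$r$ term is absent because $da = \phi^*\nu-\nu$ must extend smoothly across a degeneracy of the form $-r\,dr\wedge ds$, and because $a$ is constant (equal to $b_C$) on $\{r=0\}$ by the normalization \eqref{norm2}. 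Propagating this through the generating-function construction, and checking that Weinstein's neighborhood theorem can be applied $C_C$-equivariantly / in a way compatible with the collar splitting, yields $H_t(r,s)=b_C+r^2 h_C(t,r,s)$ with $|b_C|<\epsilon$ and $\|h_C\|_{C^0}<\epsilon$.

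The main obstacle I anticipate is precisely the interaction of the generating-function / Weinstein construction with the boundary degeneracy: $T^*\mathrm{int}(\Sigma)$ with its canonical symplectic form is \emph{not} the right ambient space near $\partial\Sigma$ because $\omega$ is not symplectic there, so one cannot blindly invoke Weinstein. The fix is to do the construction in the interior and then patch it near the collar by the explicit computation above, using a cutoff in $r$; ensuring that the patched $H_t$ is still quasi-autonomous (i.e.\ that gluing does not create spurious extrema, and that $z_{\min},z_{\max}$ remain the global extrema) is the delicate bookkeeping step, and is where the smallness hypotheses are genuinely used. Everything else — smoothness in $t$, the $C^1$ estimates, the homotopy-class statement — is routine once the normal form is in place.
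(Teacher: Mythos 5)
You have the right high-level strategy, and it is in fact the one the paper uses: pass to a Weinstein tubular neighborhood of the diagonal in $\Sigma\times\Sigma$, identify the graph of $\phi$ with the graph of an exact one-form $dF$ for a generating function $F$, take the linear path $t\mapsto t\,dF$, pull back to an isotopy $\{\phi_t\}$, and read off quasi-autonomy from the fact that the critical points of $F$ are fixed for all $t$. You also correctly flag the real difficulty, namely the degeneracy of $\omega$ along $\partial\Sigma$.

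The gap is in how you propose to handle that difficulty. You suggest doing Weinstein in the interior and then ``patching near the collar by the explicit computation\ldots using a cutoff in $r$,'' at the level of $H_t$. This does not work as stated: a cutoff applied to the Hamiltonian (or to the generating function) changes the time-one map, so the resulting isotopy no longer ends at $\phi$, and it is unclear how to restore that without circularity. What the paper actually does is prove a boundary-adapted Weinstein tubular neighborhood theorem (Lemma~\ref{l:Weinstein}): the diffeomorphism $\psi$ onto a neighborhood of the zero section in $\Tan^*\Sigma$ is constructed globally, with an \emph{explicit} formula $\psi(r,s,R,S)=\big(R,s,R(S-s),\tfrac12(r^2-R^2)\big)$ in each collar $A_C$, and the interior piece is glued to this explicit collar piece by a Moser argument at the level of the identification $\psi$ itself, not at the level of $H_t$. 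Once $\psi$ exists up to the boundary, the linear path $t\,dF$ and the formula $H_t=F\circ\pi\circ\psi(\phi_t^{-1}(\cdot),\cdot)$ produce the isotopy with no further patching, and quasi-autonomy follows cleanly.

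There is a second gap in your argument for item~$(ii)$. You claim that the normalized action $a$ on the collar has no linear-in-$r$ term ``because $da=\phi^*\nu-\nu$ must extend smoothly across a degeneracy \ldots and because $a$ is constant on $\{r=0\}$.'' Smooth extendability of $da$ is automatic for any smooth $a$ and gives no information, and constancy of $a$ on $\{r=0\}$ only kills $\partial_s a$ there, not $\partial_r a$. A direct computation in collar coordinates with $\phi(r,s)=(R,S)$, $R(0,s)=0$, shows $\partial_r a(0,s)=a_C\,\partial_r S(0,s)$, which has no reason to vanish in general. What \emph{does} vanish on $\partial\Sigma$ is the full differential of the \emph{generating function} $F$, which differs from $a$ by the correction term $f\circ\Phi$ coming from the Weinstein identification, composed with $\tilde\phi^{-1}$. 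The vanishing $dF|_{\partial\Sigma}=0$ is read off from the generating-function relations $\partial_R F(R,s)=R(S-s)$ and $\partial_s F(R,s)=\tfrac12(r^2-R^2)$, both of which vanish at $R=0$; these relations are exactly what the explicit collar formula for $\psi$ delivers. So your ``the action already has the right Taylor expansion'' shortcut does not close, and the boundary-adapted Weinstein map is not an optional convenience but the mechanism by which $(ii)$ is obtained.

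In short: right skeleton and right diagnosis of the obstruction, but the proposed cure (Hamiltonian-level cutoff plus an expansion of the action $a$) has two steps that would fail; the correct cure is to prove Weinstein's theorem itself in a form that is smooth up to the degenerate boundary, with an explicit collar model, and to work with the generating function $F$ rather than the action $a$.
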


In statements (i) and (ii), the $C^1$ distances and norm are measured with respect to an arbitrary Riemannian metric on $\Sigma$.

\begin{rem}
Theorem \ref{t:quasi_autonomous} has other interesting applications. For instance, it implies that the identity in $\Ham(\Sigma,\omega)$ has a $C^1$-neighborhood on which the Hofer metric is flat. See \cite{Bialy:1994} for more about this in the setting of compactly supported Hamiltonian diffeomorphisms of $\R^{2n}$ and \cite{Lalonde:1995} for the case of compactly supported Hamiltonian diffeomorphisms of more general symplectic manifolds.
\hfill\qed
\end{rem}

This theorem is proven in the next section. Here we will show how the fixed point theorem  can be deduced from it.

\begin{proof}[Proof of Theorem \ref{t:fixed_point}] 
If $\tilde{\phi}$ is the identity, then any point $z\in \mathrm{int}(\Sigma)$ is a contractible fixed point of $\tilde{\phi}$ and $a_{\tilde\phi}(z)=\Cal(\tilde\phi)=0$. Therefore, we must prove that if $\mathcal{U}$ is a sufficiently small $C^1$-neighborhood of the identity in $\widetilde{\Ham}_0(\Sigma,\omega)$ then any $\tilde{\phi}\in\mathcal{U}\setminus\{\id\}$ with $\Cal(\tilde{\phi}) \leq 0$ has a contractible fixed point $z\in \mathrm{int}(\Sigma)$ satisfying the strict inequality  in \eqref{actionbound}. 

We fix
\begin{equation}
\label{chiepsilon}
\epsilon := \frac{N}{4 \area(\Sigma,\omega)\, c}>0,
\end{equation}
where $N\geq 1$ is the number of connected components of $\partial \Sigma$ and $c$ is the arbitrary positive number which appears in the statement we are proving.
By Theorem \ref{t:quasi_autonomous}, if $\mathcal{U}$ is sufficiently small then any
$\tilde{\phi}\in\mathcal{U}\setminus\{\id\}$ is represented by a Hamiltonian isotopy $\{\phi_t\}$ which is generated by a quasi-autonomous Hamiltonian $H_t$ satisfying the bounds (i) 
and (ii) for the $\epsilon$ given by~\eqref{chiepsilon}.

Since $\tilde{\phi}$ belongs to $\widetilde{\Ham}_0(\Sigma,\omega)$ and $H$ is constant on $[0,1]\times C$ for every connected component $C$ of $\partial \Sigma$, up to adding a suitable constant we may assume that $H_t$ vanishes on $\partial \Sigma$ for every $t\in [0,1]$. By Theorem \ref{t:quasi_autonomous}(ii), on the collar neighborhood $A_C \equiv [0,\rho) \times S^1$ of every connected component $C$ of $\partial \Sigma$ the Hamiltonian $H_t$ has the form
\begin{align}
\label{specfor}
H_t(r,s) = r^2 h_C(t,r,s), \qquad \mbox{where} \quad \|h_C\|_{C^0} < \epsilon.
\end{align}
Since $H_t$ is quasi-autonomous, there exists $z_{\min}\in \Sigma$ which minimizes $H_t$ for every $t\in [0,1]$. Since $H_t$ vanishes on $\partial \Sigma$, we have $H_t(z_{\min}) \leq 0$ for every $t\in [0,1]$. Since
\[
\int_0^1 \left( \int_{\Sigma} H_t \, \omega \right)\, dt = \frac{1}{2} \Cal(\tilde{\phi}) \leq 0,
\]
and since $H$ does not vanish identically, because $\tilde{\phi} \neq \mathrm{id}$, $H_t(z_{\min})$ is strictly negative for some $t\in[0,1]$. In particular, $z_{\min}$ belongs to the interior of $\Sigma$ and hence is a contractible fixed point of $\tilde{\phi}$ of action 
\[
a_{\tilde{\phi}}(z_{\min}) = \int_0^1 H_t(z_{\min})\, dt<0.
\]
In order to estimate this action, we introduce the function
\[
K: \Sigma\rightarrow \R, \qquad K(z):= \int_0^1 H_t(z)\, dt.
\]
The point $z_{\min}$ minimizes $K$, and 
\[
-m:= K(z_{\min}) = a_{\tilde{\phi}}(z_{\min})<0. 
\]
Consider the collar neighborhood $A_C\equiv [0,\rho) \times S^1$ of some connected component $C$ of $\partial \Sigma$. By (\ref{specfor}), we have
\[
|K(r,s)| \leq \epsilon r^2, \qquad \forall (r,s)\in A_C.
\]
Together with the fact that $K\geq -m$ on $\Sigma$, we deduce that
\begin{equation}
\label{lower}
K(r,s) \geq \max\{ -\epsilon r^2 , -m\} 
\qquad \forall (r,s)\in A_C.
\end{equation}
Up to reducing if necessary the neighborhood $\mathcal{U}$, we can make the $C^0$-norm of $H$ as small as we wish and hence we may assume that $m< \epsilon \rho^2$. Therefore,
\[
\max\{ -\epsilon r^2 , -m\}  = 
\left\{ 
\begin{array}{@{}ll} 
- \epsilon r^2 , & \forall r\in \Bigl[ 0,\sqrt{m/\epsilon} \bigr],\vspace{5pt}\\ 
-m, & \forall r\in \bigl[ \sqrt{m/\epsilon}, \rho \bigr). 
\end{array} 
\right.
\]
By integrating \eqref{lower} over $A_C$, we infer
\[
\begin{split}
\int_{A_C} K\, \omega 
&> 
\int_{[0,\rho) \times S^1} \max\{ -\epsilon r^2 , -m\} r\, dr\wedge ds 
= \int_0^{\rho} \max\{ -\epsilon r^2 , -m\} r\, dr \\ 
&= - \epsilon \int_0^{\sqrt{m/\epsilon}}r^3\, dr - m \int_{\sqrt{m/\epsilon}}^{\rho} r\, dr = -\frac{m}{2} \rho^2 + \frac{m^2}{4\epsilon}\\
& 
= 
-m\area(A_C,\omega) + \frac{m^2}{4\epsilon} , 
\end{split}
\]
Note that we have written a strict inequality here because the inequality in (\ref{lower}) cannot be everywhere an equality, as the right-hand side is not a differentiable function of $r$ at $r=\sqrt{m/\epsilon} \in (0,\rho)$. On the other hand, on $\Sigma\setminus U$, where $U$ denotes the union of the collar neighborhoods $A_C$ of the $N$ components of $\partial \Sigma$ we have
\[
\int_{\Sigma\setminus U} K\, \omega \geq -m \area(\Sigma\setminus U,\omega).
\]
Putting these inequalities together, we find
\begin{align}
\label{e:final_fixpoint_ineq}
\int_{\Sigma} K\, \omega > -m\area(\Sigma,\omega) + N \frac{m^2}{4\epsilon}. 
\end{align}
Since $-m=a_{\tilde{\phi}}(z_{\min})$ and the integral of $K$ on $\Sigma$ is $\tfrac12\Cal(\tilde{\phi})$, the inequality~\eqref{e:final_fixpoint_ineq} and our choice of $\epsilon$ in (\ref{chiepsilon}) give us the desired conclusion:
\[
a_{\tilde{\phi}}(z_{\min}) + c\, a_{\tilde{\phi}}(z_{\min})^2
<  \frac{\Cal(\tilde{\phi})}{ 2\, \area(\Sigma,\omega)}  = \frac{1}{2} \, \widehat{\Cal}(\tilde{\phi}).
\qedhere
\]
\end{proof}

\subsection{Construction of a generating quasi-autonomous Hamiltonian}\label{ss:quasi_autonomous}

The aim of this section is to prove Theorem \ref{t:quasi_autonomous}. The proof closely follows the argument of \cite[Section 4]{Benedetti:2021aa}, but for sake of completeness we work out the details.

By Assumption \ref{assumption} and up to reducing the positive number $\rho$ appearing there, we can find a primitive $\nu_0$ of $\omega$ which on the collar neighborhood $A_C\equiv [0,\rho)\times S^1$ of each component $C$ of the boundary of $\Sigma$ has the form
\begin{equation}
\label{e:lambda|_A}
\nu_0|_{A_C} = \bigl( a_C - \tfrac{1}{2} r^2 \bigr)\, ds,
\end{equation}
for some $a_C\in \R$. Indeed, if $\nu$ is any primitive of $\omega$ and $a_C$ is its integral on the boundary component $C = \{0\} \times S^1$, then the one-form above differs from $\nu|_{A_C}$ by the differential of a function $g_C$. By adding to $\nu$ the differential of a function on $\Sigma$ that agrees with $g_C$ on a slighltly reduced collar neighborhood of every component $C$ of $\partial \Sigma$, we obtain the desired primitive $\nu_0$.

For $i=1,2$, we consider the one-forms $\nu_i:=\pr_i^*\nu_0$ on $\Sigma\times\Sigma$, where 
\[
\pr_i:\Sigma\times\Sigma\to\Sigma,  \qquad \pr_i(z_1,z_2)=z_i,
\]
and the standard Liouville form $\lambda_{\mathrm{std}}$ on the cotangent bundle $\Tan^*\Sigma$, which is uniquely defined by the equation $\alpha^*\lambda_{\mathrm{std}}=\alpha$ for all one-forms $\alpha$ on $\Sigma$.

If $z_1$ and $z_2$ are points on the same connected component $C$ of $\partial \Sigma$, then they are identified with pairs $(0,s_1)$, $(0,s_2)$ in $A_C=[0,\rho)\times S^1$. When $s_2$ and $s_1$ are not antipodal in $S^1=\R/\Z$, meaning that $|s_2-s_1|< \tfrac 12$ for suitable lifts to $\R$, we denote by $[z_1,z_2]$ the unique shortest oriented arc in $C$ from $z_1$ to $z_2$, so that
\begin{align*}
\int_{[z_1,z_2]} \!\!\! \diff s < \tfrac12.
\end{align*}

The next result is a version of Weinstein tubular neighborhood theorem in our setting.

\begin{lem}[Weinstein tubular neighborhood]
\label{l:Weinstein}
There exists an open neighborhood $U\subset\Sigma\times\Sigma$ of the diagonal $\Delta_\Sigma=\{(z,z)\ |\ z\in\Sigma\}$, an open neighborhood $V\subset\Tan^*\Sigma$ of the $0$-section $0_\Sigma\subset\Tan^*\Sigma$, and a smooth map $\psi:U\to V$ that restricts to a diffeomorphism $\psi:U\cap\mathrm{int}(\Sigma\times\Sigma)\to V\cap \Tan^* \mathrm{int} (\Sigma)$, and satisfies
\[
\psi(\Delta_\Sigma)=0_\Sigma,
\qquad
\psi^*\lambda_{\std}=\nu_2-\nu_1 + \diff f,
\] where $f:\Sigma\times\Sigma\to\R$ is a smooth function such that $f|_{\Delta_\Sigma}\equiv0$ and 
\begin{align*}
 \diff f(z_1,z_2)=(\nu_1-\nu_2)_{(z_1,z_2)},
 \qquad
 f(z_1,z_2)=-\int_{[z_1,z_2]} \!\!\!\!\!\!\nu_0,
\end{align*}
for every pair $(z_1,z_2)\in U\cap(\partial\Sigma\times\partial\Sigma)$.
If  $A_C\equiv[0,\rho)\times S^1$ is the collar neighborhood of a connected component $C$ of $\partial\Sigma$  on which $\nu_0$ has the form (\ref{e:lambda|_A}), the restriction $\psi|_{U\cap(A_C\times A_C)}$ has the form
\begin{equation}
\label{psi}
\begin{split}
 \psi(r,s,R,S)=\big( R,s,\,R\,(S-s),\tfrac 1 2 (r^2-R^2) \big),
 \\
 \forall (r,s,R,S)\in U\cap(A_C\times A_C).
\end{split} 
\end{equation}
\end{lem}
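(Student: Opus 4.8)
The plan is to construct $\psi$ in two pieces — an explicit model near $\partial\Sigma$, where the requested boundary behaviour essentially forces formula~\eqref{psi} on us, and a Weinstein/Moser construction in the interior — and then to glue the two. Near the boundary I would simply \emph{define} $\psi$ on $U\cap(A_C\times A_C)$ by~\eqref{psi} and verify everything directly. Writing $\lambda_{\std}$ in canonical cotangent coordinates on $\Tan^*A_C$, formula~\eqref{psi} gives at once
\[
\psi^*\lambda_{\std}=R(S-s)\,\diff R+\tfrac12(r^2-R^2)\,\diff s ,
\]
and using $\nu_1=\pr_1^*\nu_0=(a_C-\tfrac12 r^2)\,\diff s$ and $\nu_2=\pr_2^*\nu_0=(a_C-\tfrac12 R^2)\,\diff S$ a two-line computation shows $\psi^*\lambda_{\std}-(\nu_2-\nu_1)=\diff f$ with
\[
f(r,s,R,S)=-\big(a_C-\tfrac12 R^2\big)(S-s).
\]
This $f$ vanishes on $\Delta_\Sigma$; for $r=R=0$ it equals $-a_C(S-s)=-\int_{[z_1,z_2]}\nu_0$, and since $\psi^*\lambda_{\std}$ is the zero covector at such points, $\diff f$ there equals $\nu_1-\nu_2$, exactly as required. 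A direct computation also gives $\det D\psi=rR$, so $\psi$ is a local diffeomorphism precisely on the interior, where it is moreover injective with smooth inverse $(\tilde r,\tilde s,P_r,P_s)\mapsto(\sqrt{\tilde r^2+2P_s},\tilde s,\tilde r,\tilde s+P_r/\tilde r)$, and it is symplectic there since $\psi^*\diff\lambda_{\std}=\diff(\nu_2-\nu_1)=\pr_2^*\omega-\pr_1^*\omega$ identically.

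Next I would globalise by a relative Moser argument. Put $\Omega:=\pr_2^*\omega-\pr_1^*\omega$ on $\Sigma\times\Sigma$: it is closed, symplectic on $\mathrm{int}(\Sigma)\times\mathrm{int}(\Sigma)$, and makes $\Delta_\Sigma$ isotropic, just as $0_\Sigma$ is isotropic for $\diff\lambda_{\std}$. First produce a diffeomorphism $\Psi_0$ from a neighborhood of $\Delta_\Sigma$ onto a neighborhood of $0_\Sigma$, with $\Psi_0(\Delta_\Sigma)=0_\Sigma$, equal to the map~\eqref{psi} on a slightly smaller collar of each boundary component, and such that $\Psi_0^*\diff\lambda_{\std}$ agrees with $\Omega$ along $\Delta_\Sigma$; this is the classical Weinstein step (choice of a Lagrangian complement of $T\Delta_\Sigma$ in the interior, the already symplectic collar model near $\partial\Sigma$, glued by a partition of unity). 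Then $\Psi_0^*\diff\lambda_{\std}-\Omega=\diff\beta$ for a one-form $\beta$ that vanishes to first order along $\Delta_\Sigma$ and identically on the collars (relative Poincaré lemma; note $H^2$ of a tubular neighborhood of $\Delta_\Sigma$ vanishes since $\Sigma$ has boundary). Applying Moser's trick on $\mathrm{int}(\Sigma)\times\mathrm{int}(\Sigma)$ to $\Omega_t:=\Omega+t\,\diff\beta$, with $\iota_{X_t}\Omega_t=-\beta$, yields a vector field $X_t$ vanishing on $\Delta_\Sigma$ and on the collars; hence $X_t$ extends smoothly by zero across $\partial(\Sigma\times\Sigma)$, its time-$1$ flow $\theta_1$ is defined near $\Delta_\Sigma$, preserves $\Sigma\times\Sigma$, fixes $\Delta_\Sigma$ pointwise, and $\psi:=\Psi_0\circ\theta_1$ satisfies $\psi^*\diff\lambda_{\std}=\Omega$ on the interior (hence on its closure by continuity), equals~\eqref{psi} near $\partial\Sigma$, maps $\Delta_\Sigma$ to $0_\Sigma$, and restricts to a diffeomorphism of interiors after shrinking.

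It then remains to produce the function $f$ globally. Since $\psi^*\diff\lambda_{\std}=\Omega=\diff(\nu_2-\nu_1)$, the one-form $\alpha:=\psi^*\lambda_{\std}-(\nu_2-\nu_1)$ is closed on the neighborhood $U$ of $\Delta_\Sigma$, and it restricts to $0$ on $\Delta_\Sigma$ because $\lambda_{\std}$ pulls back to $0$ on $0_\Sigma$ while $\nu_1$ and $\nu_2$ both restrict to $\nu_0$ on $\Delta_\Sigma$. Since $U$ retracts onto $\Delta_\Sigma\cong\Sigma$, the restriction $H^1(U)\to H^1(\Delta_\Sigma)$ is an isomorphism, so $[\alpha]=0$ and $\alpha=\diff f$; as $\Sigma$ is connected and $\alpha|_{\Delta_\Sigma}=0$, we may normalise $f|_{\Delta_\Sigma}\equiv0$. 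Finally, near each boundary component $\diff f=\alpha=\diff\big(-(a_C-\tfrac12 R^2)(S-s)\big)$ and both functions vanish on $\Delta_\Sigma$, so they coincide there; specialising to $r=R=0$ gives $f(z_1,z_2)=-a_C(S-s)=-\int_{[z_1,z_2]}\nu_0$ and $\diff f(z_1,z_2)=(\nu_1-\nu_2)_{(z_1,z_2)}$, completing the proof.

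The main obstacle will be precisely the boundary: arranging $\Psi_0$ and the Moser correction to coincide \emph{exactly} with the model~\eqref{psi} near $\partial\Sigma$ (so that the degeneracy of $\Omega$ there is matched by the degeneracy $\det D\psi=rR$ of $\psi$, and $\psi$ is still smooth across $\partial(\Sigma\times\Sigma)$ while being a diffeomorphism only of interiors), and checking that the Moser flow does not push points out of $\Sigma\times\Sigma$. Away from $\partial\Sigma$ this is nothing more than the classical Weinstein Lagrangian neighborhood theorem applied to the diagonal.
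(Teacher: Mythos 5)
Your proof is essentially the same approach as the paper's: construct the map by an explicit model~\eqref{psi} on the collars, verify the required identities there by direct computation, extend arbitrarily over the diagonal, and correct by a relative Moser isotopy that vanishes both on $\Delta_\Sigma$ and on the collars, so that it extends by the identity across $\partial(\Sigma\times\Sigma)$. Your boundary computation reproduces (up to a harmless sign rewrite) the paper's $f_0(r,s,R,S)=(a_C-\tfrac12 R^2)(s-S)$, and the boundary conditions on $f$ and $\diff f$ follow exactly as in the paper.

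The one organizational difference worth noting: the paper runs Moser on the interpolated \emph{one-forms} $\mu_t=t\,\kappa_1^*\lambda_{\std}+(1-t)(\nu_2-\nu_1)$, which has the advantage of carrying the primitive $f$ through the isotopy automatically (and does not require the preliminary step of matching the two-forms along $\Delta_\Sigma$ — the Moser vector field already vanishes on the diagonal because $\kappa_1^*\lambda_{\std}$ and $\diff f_1-\nu_1+\nu_2$ both do). You instead run Moser on the interpolated \emph{two-forms} $\Omega_t$, and then recover $f$ a posteriori from the closedness of $\alpha=\psi^*\lambda_{\std}-(\nu_2-\nu_1)$ and the retraction $U\simeq\Delta_\Sigma$. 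That extra cohomological step is correct, but it introduces the need to first arrange $\Psi_0^*\diff\lambda_{\std}=\Omega$ along $\Delta_\Sigma$ (the ``classical Weinstein step'') and to invoke a relative Poincar\'e lemma producing a primitive $\beta$ that vanishes both along $\Delta_\Sigma$ and on the collars; these are doable but fussier than the paper's one-form bookkeeping. In short: same route, slightly heavier machinery in the gluing step, and you correctly flag the genuine delicacies (collar degeneracy, keeping the Moser flow inside $\Sigma\times\Sigma$) that any version of the argument must address.
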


\begin{proof}
We first provide the construction within the collar neighborhood $A=A_C$ of each connected component $C$ of $\partial\Sigma$.
We consider a small enough neighborhood $W\subset A\times A$ of the diagonal $\Delta_A=\{(z,z)\ |\ z\in A\}$ so that, for each $(r,s,R,S)\in W$, the points $s,S\in S^1$ are not antipodal. We define the map 
\begin{align*}
 \kappa_0:W\to \underbrace{\Tan^*A}_{A\times \R^2},
 \qquad
 \kappa_0(r,s,R,S)=\big( R,s,\,R\,(S-s),\tfrac 12 (r^2-R^2) \big).
\end{align*}
This map restricts to a diffeomorphism onto its image
\[\kappa_0:W\cap\mathrm{int}(A\times A)\to \mathrm{int}(\Tan^*A),\] 
and satisfies $\kappa_0(\Delta_A)=0_A$ and
\begin{align*}
\kappa_0^*\lambda_{\std}
&=
R\,(S-s)\,\diff R + \tfrac 12(r^2-R^2)\,\diff s\\
&=
-\tfrac 12\,R^2\,\diff S + \tfrac 12\,r^2\,\diff s + \diff\big( \tfrac 12(S-s)\,R^2 \big)\\
&=
-\nu_1+\nu_2+\diff\big( (a_C -\tfrac 12 R^2)(s-S) \big)\\
&=-\nu_1+\nu_2+\diff f_0,
\end{align*}
where
\begin{align*}
 f_0:W\to\R,\qquad f_0(r,s,R,S)=(a_C -\tfrac 12 R^2)(s-S).
\end{align*}
Notice that $f_0|_{\Delta_A}\equiv0$ and 
\begin{align*}
f_0(0,s,0,S) & =a_C (s-S) = - \int_{[z_1,z_2]} \nu_0,
\\
\diff f_0(0,s,0,S) & = a_C \,\diff s-a_C\,\diff S = \nu_1-\nu_2,
\end{align*}
where $z_1=(0,s)$ and $z_2=(0,S)$.

For some sufficiently small neighborhood $U\subset\Sigma\times\Sigma$ of the diagonal $\Delta_\Sigma$, we choose an arbitrary smooth function $f_1:U\to\R$ that coincides with $f_0$ on a neighborhood of $U\cap(\partial\Sigma\times\partial\Sigma)$ and satisfies $f_1|_{\Delta_\Sigma}\equiv0$ and $\diff f_1=\nu_1-\nu_2$ at all points of the diagonal $\Delta_\Sigma$. Up to further shrinking the neighborhood $U$, we also choose a smooth map $\kappa_1:U\to\Tan^*\Sigma$ that coincides with $\kappa_0$ on a neighborhood of $W\cap(\partial\Sigma\times\partial\Sigma)$, restricts to a diffeomorphism onto its image $\kappa_1:U\cap\mathrm{int}(\Sigma\times\Sigma)\to\Tan^*(\mathrm{int}(\Sigma))$, and such that $\kappa_1(\Delta_\Sigma)=0_\Sigma$.

We now conclude the proof by means of a typical application of Moser's trick. We set 
\begin{align*}
 \mu_t:=t\kappa_1^*(\lambda_{\std}) + (1-t)(\nu_2-\nu_1),
\end{align*}
and we look for an isotopy $\phi_t:U\to\Sigma\times\Sigma$, defined after possibly further shrinking the neighborhood $U$, such that $\phi_0=\id$ and $\phi_t^*\mu_t-\mu_0$ is exact. We denote by $X_t$ the time-dependent vector field generating $\phi_t$ and compute
\begin{align*}
\tfrac{\diff}{\diff t}\phi_t^*\mu_t
&=\phi_t^* ( L_{X_t} \mu_t +  \tfrac{\diff}{\diff t} \mu_t) = \phi_t^*\big( X_t\,\lrcorner\,\diff\mu_t + \diff(\mu_t(X_t))+ \mu_1-\mu_0 \big)\\
&=\phi_t^*\big( X_t\,\lrcorner\,\diff\mu_t + \kappa_1^*\lambda_{\std}-\nu_2+\nu_1-\diff f_1 + \diff(\mu_t(X_t)+f_1)\big).
\end{align*}
Notice that the symplectic forms $\kappa_1^*\diff\lambda_{\std}$ and $-\diff\nu_1+\diff\nu_2$ define the same orientation, since they coincide on a neighborhood of $U\cap(\partial\Sigma\times\partial\Sigma)$. Therefore $\diff\mu_t$ is symplectic away from $\partial(\Sigma\times \Sigma)$ for every $t\in[0,1]$. We choose the vector field $X_t$ so that
\begin{align*}
X_t\,\lrcorner\,\diff\mu_t + \kappa_1^*\lambda_{\std}-\nu_2+\nu_1-\diff f_1 
=
0.
\end{align*}
Notice that $X_t$ vanishes on the diagonal $\Delta_\Sigma$ and on a neighborhood of $U\cap(\partial\Sigma\times\partial\Sigma)$. 
Moreover
\begin{align*}
 \tfrac{\diff}{\diff t}\phi_t^*\mu_t
 =
 \phi_t^*\diff(\mu_t(X_t)+f_1).
\end{align*}
Up to shrinking the neighborhood $U$, we obtain a well defined isotopy $\phi_t:U\to\Tan^*\Sigma$ that coincides with $\kappa_0$ on a neighborhood of $U\cap(\partial\Sigma\times\partial\Sigma)$ and satisfies
\begin{align*}
\phi_1^*\kappa_1^*(\lambda_{\std})=-\nu_1+\nu_2 + \diff f,
\end{align*}
where
\begin{align*}
 f(w)=\int_0^1 \big(\mu_t(X_t)+f_1\big)\circ\phi_t(w)\,\diff t.
\end{align*}
The desired map is $\psi:=\kappa_1\circ\phi_1$.
\end{proof}

\begin{proof}[Proof of Theorem~\ref{t:quasi_autonomous}] 
We still work with the special primitive $\nu_0$ of $\omega$ satisfying (\ref{e:lambda|_A}).
By assumption, the diffeomorphism $\phi: \Sigma \rightarrow \Sigma$ satisfies
\[
\phi^* \nu_0 - \nu_0 = da
\]
for some smooth function $a$ on $\Sigma$. Note that $a$ is $C^1$-small when $\phi$ is $C^1$-close to the identity. We consider the associated map
\[
\Phi: \Sigma \rightarrow \Sigma \times \Sigma, \qquad \Phi(z) = (z,\phi(z)).
\]
We require $\phi$ to be sufficiently $C^1$-close to the identity so that the image of $\Phi$ is contained in the domain of the map $\psi:U\to\Tan^*\Sigma$ provided by Lemma~\ref{l:Weinstein}, and the image of $\psi\circ\Phi$ is a section of the cotangent bundle $\Tan^*\Sigma$. Namely, if we denote by $\pi:\Tan^*\Sigma\to\Sigma$ the projection onto the base of the cotangent bundle, the map 
\begin{align*}
\widetilde\phi:\Sigma\to\Sigma,
\qquad
\widetilde\phi(z)=\pi\circ\psi\circ\Phi(z)
\end{align*}
is a diffeomorphism.
We consider the smooth function $f:\Sigma\times\Sigma\to\R$ provided by Lemma~\ref{l:Weinstein}. 
Since
\begin{align*}
 (\psi\circ\Phi)^*\lambda_{\std} 
 = 
 \Phi^*(\nu_2-\nu_1+\diff f)
 =
 \phi^*\nu-\nu+\diff(f\circ\Phi)
 =
 \diff( a+f\circ\Phi),
\end{align*}
we have that 
\begin{align}
\label{defF}
\psi\circ\Phi(z)=(\widetilde\phi(z),\diff F(\widetilde\phi(z))),
\end{align}
where $F:\Sigma\to\R$ is the smooth generating function
\begin{align*}
 F(w)=( a+f\circ\Phi)\circ\widetilde\phi^{-1}(w).
\end{align*}
Identity (\ref{defF}) implies that $F$ is $C^2$-small when $\phi$ is $C^1$-close to the identity.

Consider now the collar neighborhood $A=A_C\equiv[0,\rho)\times S^1$ of a connected component $C$ of $\partial\Sigma$ as in (\ref{e:lambda|_A}). For all $(r,s)\in A\cap\phi^{-1}(A)$, if we set $(R,S)=\phi(r,s)$, we have $\widetilde\phi(r,s)=(R,s)$ and
\begin{equation}
\label{genfun}
 R\,(S-s) = \partial_R F(R,s),\qquad
 \tfrac 12 (r^2-R^2) = \partial_s F(R,s).
\end{equation}
This implies that $\diff F=0$ at all points of $\partial\Sigma$. In particular, $F$ is constant on each component $C$ of $\partial \Sigma$ and in a neighborhood of this component we can write $F$ as
\begin{equation}
\label{Fnearbdry}
 F(R,s)=b+ R^2G(R,s),
\end{equation}
where $b=b_C$ is a real number and $G=G_C$ is a smooth function. More precisely, 
\[
b= \lim_{R\rightarrow 0} \frac{\partial_R F(R,s)}{R} =  S(0,s) - s
\]
and the Taylor theorem with integral remainder gives us the formula
\[
F(R,s) = b + R^2 \int_0^1 \partial_R^2 F(tR,s) (1-t)\, dt.
\] 
By differentiating the first identity in (\ref{genfun}) with respect to $R$, we find
\[
G(R,s) = \int_0^1 \partial_R^2 F(tR,s) (1-t)\, dt = \int_0^1 \bigl( -s + S(tR,s) + tR \,\partial_R S(tR,s)  \bigr) (1-t)\, dt.
\]
The above formulas for $b$ and $G$ imply that $|b|$ and $\|G\|_{C^0}$ are both small if $\phi$ is $C^1$-close to the identity.

We now consider the isotopy 
\[\phi_t:\mathrm{int}(\Sigma)\to\mathrm{int}(\Sigma),\ t\in[0,1],\] 
with the associated map $\Phi_t:\mathrm{int}(\Sigma)\to\mathrm{int}(\Sigma\times\Sigma)$, $\Phi_t(z)=(z,\phi_t(z))$, whose image $\psi\circ\Phi_t(\mathrm{int}(\Sigma))$ is the graph of $t\,\diff F$. Notice that $\phi_t$ defines an associated diffeomorphism 
\[
\widetilde\phi_t:=\pi\circ\psi\circ\Phi_t:\mathrm{int}(\Sigma)\to\mathrm{int}(\Sigma),
\]
and
\begin{align}
\label{e:graph_tdF}
\psi\circ\Phi_t(z)=(\widetilde\phi_t(z),t\,\diff F(\widetilde\phi_t(z))).
\end{align}
The endpoints of the isotopy are $\phi_0=\id$ and $\phi_1=\phi$. We claim that $\phi_t$ extends as a smooth isotopy $\phi_t:\Sigma\to\Sigma$ that is $C^1$-close to the identity. In order to prove this, let us focus on the collar neighborhood $A_C\equiv[0,\rho)\times S^1$ of a connected component $C$ of $\partial\Sigma$.  If we write $(R_t,S_t):=\phi_t(r,s)$, then Equation~\eqref{e:graph_tdF} in the annulus $\mathrm{int}(A_C)$ becomes
\begin{align*}
R_t\,(S_t-s)  = t\,\partial_RF(R_t,s),\qquad
\tfrac 12 (r^2-R_t^2)  = t\,\partial_sF(R_t,s),
\end{align*}
that is, using (\ref{Fnearbdry}),
\begin{align*}
S_t  = s + \underbrace{\big(2 G(R_t,s) +  R_t\,\partial_R G(R_t,s) \big) t}_{(*)},\qquad
r  = R_t\underbrace{\sqrt{ 1 + 2 t\,\partial_sG(R_t,s) }}_{(**)}.
\end{align*}
The term $(*)$ is $C^1$-small and the term $(**)$ is $C^1$-close to 1  as functions of $(R_t,s)$. This shows that the isotopy $(R_t,s)\mapsto(r,S_t)$ is $C^1$-close to the identity and extends smoothly to the boundary $C$ by $(0,s)\mapsto(0,s +2 t\,G(0,s))$. Therefore, we obtain a $C^1$-close to the identity smooth extension $\phi_t:\Sigma\to\Sigma$ as well.

Let $X_t$ bt the time dependent vector field generating the isotopy $\phi_t$. We claim that $X_t$ is Hamiltonian with Hamiltonian function
\begin{align}
\label{H_t}
H_t:\Sigma\to\R,\qquad H_t(z) := F\circ\pi\circ\psi(\phi_t^{-1}(z),z).
\end{align}
Indeed, consider an arbitrary $v\in\Tan_z\Sigma$ and  set
\[w:=\diff\widetilde\phi_t(z)v, \qquad q_t:=\widetilde\phi_t(z), \qquad y_t:=\tfrac{\diff}{\diff t}\widetilde\phi_t(z).\] 
In Darboux coordinates, we locally see $\Tan^*\Sigma$ as $\Sigma\times\R^2$, and compute
\begin{align*}
 \diff\nu\big(X_t(\phi_t(z)),\diff\phi_t(z)v\big)
 &=
 \psi^*\diff\lambda_{\std} \big( (0,X_t(\phi_t(z))) , (v,\diff\phi_t(z)v) \big)\\
 &=
 \diff\lambda_{\std}\big( (y_t,\diff F(q_t) + t\,\diff^2F(q_t)y_t) , (w,t\,\diff^2F(q_t)w) \big)\\
 &=
 \diff F(q_t)w + t\,\diff^2F(q_t)[y_t,w] - t\,\diff^2F(q_t)[w,y_t]\\
 &=
 \diff F(q_t)w
 = \diff (F\circ\widetilde\phi_t)(z)v\\
 &= \diff (F\circ\widetilde\phi_t\circ\phi_t^{-1})(\phi_t(z))\diff\phi_t(z)v\\
 &=\diff H_t(\phi_t(z))\diff\phi_t(z)v,
\end{align*}
proving our claim.

Note that for every $t\in [0,1]$, the maximum and minimum of $H_t$ on $\Sigma$ coincide with those of $F$. Note also that by (\ref{Fnearbdry}) we have
\begin{equation}
\label{sopra}
H_t(z) = F(z) = b_C \qquad \forall z\in C,
\end{equation}
for every connected component $C$ of $\partial \Sigma$. Moreover, the previous considerations on $F$, $b_C$ and $G_C$ imply that if $\phi$ is $C^1$-close to the identity, then $H_t$ is $C^1$-small and on $A_C$ has the form
\[
H_t(r,s) = b_C + r^2 h_C(t,r,s),
\]
where both $|b_C|$ and $\|h_C\|_{C^0}$ are small. Indeed, the above identity and the $C^0$-smallness of $h_C$ follow from (\ref{psi}) and (\ref{H_t}), which give us the identity
\[
H_t(r,s) = F(r,\overline{S}_t(r,s)) = b_C + r^2 G_C(r,\overline{S}_t(r,s))
\]
where $\phi_t^{-1}(r,s)) = ( \overline{R}_t(r,s),\overline{S}_t(r,s))$. Together with the already mentioned $C^1$-closeness of $\phi_t$ to the identity, this proves statements (i) and (ii) in Theorem \ref{t:quasi_autonomous}. 

Let us check that for every $t\in [0,1]$ the function $H_t$ achieves its minimum at some point $z_{\min}\in \Sigma$ which is independent of $t$. If $F$ achieves its minimum on $\partial \Sigma$, this follows from the identity $\min H_t = \min F$ and (\ref{sopra}). So let us assume that $F$ achieves its minimum at an interior point $q_{\min}$. Then $dF(q_{\min})=0$ and, since the inverse image of the zero-section in $T^* \mathrm{int}(\Sigma)$ is the diagonal in $\mathrm{int}(\Sigma) \times \mathrm{int}(\Sigma)$, we have
\[
\psi(z_{\min},z_{\min}) = (q_{\min},0)
\]
for some $z_{\min} \in \mathrm{int}(\Sigma)$. Since $\psi$ maps the graph of $\phi_t$ to the graph of $t\, dF$, we have
\[
\phi_t(z_{\min}) = z_{\min} \qquad \forall t\in [0,1],
\]
and hence
\[
H_t(z_{\min}) = F \circ \pi \circ \psi(z_{\min},z_{\min}) = F \circ \pi (q_{\min},0) = F(q_{\min}).
\] 
This shows that
\[
H_t(z_{\min}) = \min_{\Sigma} H_t \qquad \forall t\in [0,1].
\]
The argument for the maximum is analogous, and we conclude that $H_t$ is quasi-autonomous.
\end{proof}

\section{Global surfaces of section for nearly Besse contact forms on 3-manifolds}
\label{s:sos}

\subsection{Global surfaces of section} 
\label{s:sos_def}
In this paper, a global surface of section for a contact form $\lambda$ on a 3-manifold $Y$ is a smooth map $\iota:\Sigma\to Y$, where $\Sigma$ is an oriented connected compact surface with non-empty and possibly disconnected boundary, with the following properties:
\begin{itemize}

\item \textit{$($Boundary$\,)$} The restriction $\iota|_{\partial\Sigma}$ is an immersion positively tangent to the Reeb vector field $R_{\lambda}$. Namely, with the orientation on the boundary $\partial\Sigma$ induced by the one of $\Sigma$,
the restriction of $\iota$ to any connected component of $\partial\Sigma$ is an orientation preserving covering map of a closed Reeb orbit of $(Y,\lambda)$.\vspace{2pt}

\item \textit{$($Transversality$\,)$} The restriction $\iota|_{\mathrm{int}(\Sigma)}$ is an embedding into $Y\setminus\iota(\partial\Sigma)$ transverse to the Reeb vector field $R_\lambda$. 
In particular, the 2-form $\iota^*\diff\lambda$ is nowhere vanishing on $\mathrm{int}(\Sigma)$, and we assume that it is a positive area form on the oriented surface $\mathrm{int}(\Sigma)$.\vspace{2pt}

\item \textit{$($Globality$\,)$} For each point $z\in Y$, the Reeb orbit $t\mapsto\phi_\lambda^t(z)$ intersects $\Sigma$ in both positive and negative time.

\end{itemize}
We stress that, in the literature, the notion of global surface of section may be slightly different than the one given here: for instance, the map $\iota:\Sigma\to Y$ may be required to be an embedding, or the restriction of $\iota$ to some connected component of $\Sigma$ may be allowed to be an orientation reversing covering map of a closed Reeb orbit.

\subsection{The surgery description of a Besse contact form on a 3-manifold}
\label{s:surgery}

The proof of Theorem~\ref{t:main} will require suitable surfaces of section for the Reeb flows of contact forms sufficiently $C^3$-close to a Besse one. As a preliminary step, in the next subsection we shall construct global surfaces of sections for the Reeb flow of Besse contact 3-manifolds. It will be useful to employ the surgery description of Besse contact 3-manifolds as Seifert fibered spaces, which we now recall. We refer the reader to, e.g., \cite{Orlik:1972ts,Jankins:1983zm} for more details.

A Seifert fibration $\pi:Y\to B$, in the generality that we need for the study of Besse contact 3-manifolds, is defined up to a suitable notion of isomorphism by a genus and $k\geq1$ pairs of coprime integers $(\alpha_1,\beta_1),...,(\alpha_k,\beta_k)\in\N\times\Z$. Here, $\N$ denotes the set of positive integers and the coprimeness assumption implies that $\alpha_j=1$ if $\beta_j=0$. We denote by $B_0$ an oriented compact connected surface of the given genus with $k$ boundary components. We write its oriented boundary as $\partial B_0=\partial_1 B_0\cup...\cup\partial_{k} B_0$, where each $\partial_j B_0$ is a connected component oriented as the boundary of $B_0$. Over $B_0$, we consider the trivial $S^1$-bundle 
\[\pi:Y_0:=B_0\times S^1\to B_0,\qquad \pi(z,t)=z,\] 
with its associated free $S^1$-action 
\begin{align*}
 t\cdot(z,s)=(z,s+t),\qquad \forall t\in S^1,\ (z,s)\in Y_0.
\end{align*}
Here and elsewhere in the paper, $S^1=\R/\Z$.
Next, we consider $k$ disjoint copies $B_j\subset \C$, $j=1,...,k$ of the disk of some radius $\rho>0$ centered at the origin, and the solid tori together with their base projections 
\begin{align*}
 \pi:Y_j:=B_i\times S^1\to B_j,\qquad \pi(z,t)=z.
\end{align*}
By the B\'ezout identity, we can find pairs of coprime integers $(\alpha_j',\beta_j')\in \Z\times \Z$ such that 
\begin{align}
\label{e:dual_Seifert_pair}
\det\left(
  \begin{array}{@{}cc@{}}
    \alpha_j & \alpha_j' \\ 
    \beta_j & \beta_j' 
  \end{array}
\right)=1,
\qquad
\forall j=1,...,k.
\end{align}
The pair $(\alpha_j',\beta_j')$ is not uniquely determined by $(\alpha_j,\beta_j)$ (except if $(\alpha_j,\beta_j)=(1,0)$, in which we necessarily have $(\alpha_j',\beta_j') = (0,1)$). We introduce the oriented curves
\begin{align*}
m_j&=\partial B_j\times\{*\}\subset \partial Y_j,
& 
l_j&=\{*\}\times S^1  \subset \partial Y_j,
\\
h_j&=\{*\}\times S^1\subset \partial_j B_0\times S^1 \subset\partial Y_0,
&
 f_j&=-\partial_j B_0\times\{*\}  \subset\partial Y_0.
\end{align*}
Here, we used the symbol $*$ to denote an arbitrary point of a space. We glue $Y_0,Y_1,...,Y_k$ along their boundaries by identifying
\begin{align*}
 m_j\equiv\alpha_jf_j+\beta_j h_j,\qquad l_j\equiv\alpha_j'f_j+\beta_j'h_j,\qquad \forall j=1,...,k
\end{align*}
and denote by $Y$ the resulting closed 3-manifold. Here, we mean that $m_j\subset\partial Y_j$ is identified with an oriented embedded circle in $\partial_j Y_0$ that is homologous to $\alpha_j[f_j]+\beta_j[h_j]$, and analogously for $l_j$.
The free $S^1$-action on $Y_0$ extends to an $S^1$-action on the whole $Y$, which on the solid tori $Y_j$ has the form
\begin{align*}
t\cdot(z,s)=\big(z e^{-2\pi\alpha_j't i},s+\alpha_j t\big),
\qquad
\forall t\in S^1,\ (z,s)\in Y_j.
\end{align*}
If $\alpha_j>1$, the $S^1$-action is not free on the orbit 
\[\gamma_j:=\{0\}\times S^1\subset Y_j,\] 
and in this case we call such an orbit singular. All those $S^1$-orbits that are not singular are called regular.
The surfaces $B_j$ are glued accordingly to form a closed surface $B$. The maps $\pi$ are glued as well, and the obtained $\pi:Y\to B$ is the quotient projection of the $S^1$-action on $Y$.
We can always assume without loss of generality that the number of Seifert pairs $(\alpha_j,\beta_j)$ is $k\geq2$; indeed, adding the trivial Seifert pair $(1,0)$ does not affect the Seifert fibration. 

The locally free $S^1$-action that is induced by a Besse Reeb flow on $Y$ can be seen as the $S^1$-action on the total space of a Seifert fibration $\pi:Y\rightarrow B$ as above. In this case, the Euler number
\begin{align*}
\eul(Y):=-\frac{\beta_1}{\alpha_1}-...-\frac{\beta_k}{\alpha_k}
\end{align*}
is negative, as shown in \cite{Lisca:2004oz} (see also \cite[Theorem 1.4]{Kegel:2021vo}).

\subsection{A Global surface of section for a Besse contact form on a 3-manifold}
\label{s:sos_Besse}
The existence of global surfaces of section in Seifert spaces was thoroughly investigated by  Albach-Geiges \cite{Albach:2021wi}. In this subsection, we prove a statement which may be of independent interest (Theorem~\ref{t:sos_Besse}) asserting that on a Besse contact 3-manifold any given closed Reeb orbit is the (multiply covered) boundary of a surface of section as defined in Subsection~\ref{s:sos_def}

Let $(Y,\lambda_0)$ be a Besse contact 3-manifold whose Reeb orbits have minimal common period 1, and $\gamma_1$ be an arbitrary closed Reeb orbit. The Reeb flow $\phi_{\lambda_0}^t$ defines a locally free $S^1$-action on $Y$. We can see this action as the $S^1$-action of a Seifert fibered structure which we describe with the same notation of the previous subsection: in particular, for each $j=1,...,k$, we denote by $\gamma_j$ the closed Reeb orbit corresponding to the Seifert pair $(\alpha_j,\beta_j)$. Notice in particular that we are assuming without loss of generality that our given $\gamma_1$ is the closed Reeb orbit corresponding to the Seifert pair $(\alpha_1,\beta_1)$. We recall that $\gamma_1$ has minimal period $1/\alpha_1$, and therefore it is a singular orbit if and only if $\alpha_1>1$.

We consider the tubular neighborhood $Y_1\subset Y$ of $\gamma_1$, which was realized as $Y_1\equiv B_1\times S^1$, and under this identification we have $\gamma_1=\{0\}\times S^1\subset Y_1$.
We equip $Y_1$ with the contact form
\begin{align*}
\lambda_0' = \tfrac{1}{\alpha_1}r^2\diff\theta + \tfrac{1}{\alpha_1}\big(1+2\pi\tfrac{\alpha'_1}{\alpha_1}r^2\big) \diff s,
\end{align*}
where $(r,\theta)$ are polar coordinates on the disk $B_1$, and $s\in S^1=\R/\Z$.
The associated Reeb vector field is given by
\[R_{\lambda_0'} = -2\pi\alpha_1'\partial_\theta + \alpha_1\partial_s=R_{\lambda_0}|_{Y_1},
\]
and therefore the associated Reeb flow agrees with the Seifert $S^1$-action
\begin{align*}
 \phi_{\lambda_0'}^t(z,s)=\phi_{\lambda_0}^t(z,s)=t\cdot(z,s)=\big(z e^{-2\pi\alpha_1't i},s+\alpha_1 t\big),
\end{align*}
Up to pulling back $\lambda_0$ by an $S^1$-equivariant diffeomorphism, we can assume that 
\[\lambda_0|_{Y_1}=\lambda_0'=\tfrac{1}{\alpha_1}r^2\diff\theta + \tfrac{1}{\alpha_1}\big(1+2\pi\tfrac{\alpha'_1}{\alpha_1}r^2\big) \diff s\]
This can be obtained by means of a Moser's trick, see \cite[Lemma~4.5]{Cristofaro-Gardiner:2020aa}.
Notice that every orbit of the Reeb flow $\phi_{\lambda_0}^t$ on $Y_1$ has minimal period $1$, except possibly $\gamma_1=\{0\}\times S^1$ that has minimal period $1/\alpha_1$ (the case in which $\gamma_1$ is regular is allowed, and corresponds to the Seifert invariants $(\alpha_1,\beta_1)=(1,0)$).

For any pair of coprime integers $p_0\neq0$ and $q_0$ such that
\begin{align}
\label{e:transversality}
 \frac {q_0}{p_0}< -\frac{\alpha_1'}{\alpha_1},
\end{align}
and for any $s_0\in S^1$, we introduce the map
\begin{align*}
\iota:[0,\rho)\times S^1\to B_1\times S^1,\qquad \iota(r,s)=\big( r e^{2\pi (s_0+q_0 s)i},p_0s \big),
\end{align*}
which satisfies the following properties:
\begin{itemize}
\item \textit{$($Transversality$\,)$} The restriction $\iota|_{(0,\rho)\times S^1}$ is an embedding transverse to the Reeb vector field $R_{\lambda_0}$, and the image $\iota((0,\rho)\times S^1)$ intersects  $-\alpha_1q_0-p_0\alpha_1'>0$ times every Reeb orbit in $Y_1$ other than $\gamma_1$. Therefore, the 2-form $\iota^*\diff\lambda_0$ is nowhere vanishing on the interior $(0,\rho)\times S^1$, and we employ it to orient the annulus $[0,\rho)\times S^1$.\vspace{2pt}

\item \textit{$($Boundary$\,)$} The  restriction $\iota|_{\{0\}\times S^1}$ is an orientation preserving $p_0$-th fold covering map of the closed Reeb orbit $\gamma_1$. Here, the boundary circle $\{0\}\times S^1$ is oriented by means of the 1-form $\iota^*\lambda_0$.
\end{itemize}
We call $\iota:[0,\rho)\times S^1\to B^2\times S^1$ a $(p_0,q_0)$-local surface of section with boundary on the orbit $\gamma_1$.

We now provide the construction of a suitable global surface of section for the Besse contact 3-manifold $(Y,\lambda)$ with boundary on the orbit $\gamma_1$. An alternative construction in the special case of a regular orbit in $S^3$ was provided by Albach-Geiges \cite[Example 5.6]{Albach:2021wi}.  The next result is a more precise version of Theorem \ref{mt:surface} from the Introduction.

\begin{thm}
\label{t:sos_Besse}
Let $(Y,\lambda_0)$ be a Besse contact 3-manifold with minimal common Reeb period $1$, and $\gamma_1$ be any of its closed Reeb orbits. We denote by $\alpha_1>0$ the integer whose reciprocal $1/\alpha_1$ is the minimal period of $\gamma_1$.
Then, there exist integers $b>0$, $p_0>0$, and $q_0$ with $\gcd(p_0,q_0)=1$, a compact connected oriented surface $\Sigma$ with $b$ boundary components and a global surface of section $\iota:\Sigma\to Y$ for $(Y,\lambda_0)$ satisfying the following properties: 
\begin{itemize}
 
\item[$(i)$] Any connected component $C$ of the boundary $\partial\Sigma$ has a collar neighborhood $A_C\cong[0,\rho)\times S^1$ such that the restriction $\iota|_{A_C}$ is a $(p_0,q_0)$-local surface of section with boundary on the orbit $\gamma_1$. In particular, all boundary components are positively oriented. \vspace{2pt}
 
\item[$(ii)$] Any regular closed Reeb orbit in $Y\setminus\gamma_1$ intersects the image $\iota(\mathrm{int}(\Sigma))$ in $\alpha$ points, where
\begin{align*}
\alpha:=- \frac{b\, p_0}{\eul(Y)\, \alpha_1}>0.
\end{align*}
Here, $\eul(Y)$ is the Euler number of $(Y,\lambda_0)$.\vspace{2pt}

\item[$(iii)$] The restriction $\iota|_{\partial\Sigma}$ is a covering map of $\gamma_1$ of degree $b\,p_0$.

\end{itemize}
\end{thm}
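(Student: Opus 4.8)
The plan is to assemble $\iota:\Sigma\to Y$ out of explicit local models dictated by the Seifert description of $(Y,\lambda_0)$ from Section~\ref{s:surgery}. Recall that the Reeb flow of $\lambda_0$ is the locally free $S^1$-action of a Seifert fibration $\pi:Y\to B$ with Seifert pairs $(\alpha_1,\beta_1),\dots,(\alpha_k,\beta_k)$, that $\gamma_1$ is the fiber over the cone point of the pair $(\alpha_1,\beta_1)$, and that on the solid-torus neighborhood $Y_1\equiv B_1\times S^1$ we may assume $\lambda_0|_{Y_1}=\lambda_0'$ after pulling $\lambda_0$ back by an $S^1$-equivariant diffeomorphism, as recalled above. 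The surface $\Sigma$ will be a \emph{horizontal surface} for the fibration: a compact oriented surface mapped into $Y$ transversally to every Reeb orbit in $\mathrm{int}(\Sigma)$, which near $\gamma_1$ is $b$ disjoint copies of the $(p_0,q_0)$-local surfaces of section constructed above (with suitably spaced phases $s_0$), near each remaining singular fiber $\gamma_j$, $j\ge2$, is a union of the transverse disks $B_j\times\{*\}\subset Y_j$ (each meeting $\gamma_j$ once and every neighbouring regular fiber $\alpha_j$ times), and over the regular part $B^*$ of the base --- obtained from $B$ by removing disjoint disks around $\pi(\gamma_1),\dots,\pi(\gamma_k)$ --- is realised inside the now genuine, and hence trivial, circle bundle $\pi^{-1}(B^*)\cong B^*\times S^1$ as the multisection $\{(x,\theta)\in B^*\times S^1\mid \alpha\theta=\phi(x)\}$ for a suitable $\phi:B^*\to\R/\Z$, an $\alpha$-fold covering of $B^*$ transverse to the fibers. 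The three families of pieces are then glued along the Seifert tori $\partial Y_1,\dots,\partial Y_k$. This is in the spirit of the constructions of Albach--Geiges~\cite{Albach:2021wi}, carried out so that the boundary lands on the prescribed orbit $\gamma_1$.

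Concretely I would proceed in three steps. \emph{Combinatorics.} First I would write, in the meridian--longitude framings $m_j,l_j,f_j,h_j$ of Section~\ref{s:surgery}, the homology classes on each torus $\partial Y_j$ of the boundary circles of the three pieces; this turns the gluing requirement into a linear system over the Seifert invariants, with unknowns $b$, $p_0$, $q_0$, the covering degree $\alpha$, and the winding numbers of $\phi$ along $\partial B^*$. The local surface of section already imposes $\gcd(p_0,q_0)=1$, $p_0>0$ and the transversality inequality $q_0/p_0<-\alpha_1'/\alpha_1$, equivalently $-\alpha_1q_0-p_0\alpha_1'>0$; matching the remaining slopes forces, after a short computation, the relations $-\alpha_1q_0-p_0\alpha_1'=-p_0/(\eul(Y)\alpha_1)$ and $\alpha=-bp_0/(\eul(Y)\alpha_1)$, and I would check that these, together with the integrality constraints coming from the other tori, can be solved with $b>0$ and $\alpha\in\N$ --- this is precisely where $\eul(Y)<0$ is used, to keep the slope inequality compatible with the equations. \emph{Assembly.} With such data fixed, glue the pieces to a compact connected oriented surface $\Sigma$ with $b$ boundary components, connectedness being arranged by a suitable choice of $\phi$, and define $\iota:\Sigma\to Y$. \emph{Verification.} Then I would check: the restriction to $\mathrm{int}(\Sigma)$ is an embedding transverse to $R_{\lambda_0}$ --- transversality is built into each model, and after a $C^\infty$-small perturbation supported away from $\partial\Sigma$ the finitely many sheets have pairwise disjoint images in $Y\setminus\iota(\partial\Sigma)$; the form $\iota^*d\lambda_0$ is a positive area form on $\mathrm{int}(\Sigma)$ --- on a collar of $\partial\Sigma$ this is the identity $\iota^*d\lambda_0=\tfrac{4\pi}{\alpha_1^2}(\alpha_1q_0+\alpha_1'p_0)\,r\,dr\wedge ds$, a negative multiple of $dr\wedge ds$ by the transversality inequality and hence positive for the chosen orientation, with analogous sign checks on the disks $B_j\times\{*\}$ and on the covering piece over $B^*$; and \emph{globality} --- an orbit contained in $Y_j$ meets the corresponding transverse disks ($\alpha_j$ times each), an orbit in $Y_1$ other than $\gamma_1$ meets the local surfaces of section ($-\alpha_1q_0-p_0\alpha_1'$ times each), and every other orbit crosses $\pi^{-1}(B^*)$ and meets the covering piece. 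Finally, property (i) is built in; (iii) holds since $\iota|_{\partial\Sigma}$ consists of $b$ circles each covering $\gamma_1$ with degree $p_0$; and (ii) is the bookkeeping identity $\alpha=-bp_0/(\eul(Y)\alpha_1)$ of the first step, re-derived as the number of intersections of a regular fiber with $\iota(\mathrm{int}(\Sigma))$ (namely $b$ times $-\alpha_1q_0-p_0\alpha_1'$, equivalently the covering degree $\alpha$).

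\textbf{Main obstacle.} The idea --- cut $Y$ along the Seifert tori and refill with horizontal pieces --- is standard; the real work is the \emph{simultaneous} bookkeeping in the first step: choosing $b$, $p_0$, $q_0$, $\alpha$ and the winding numbers of $\phi$ so that all boundary slopes on all the tori $\partial Y_1,\dots,\partial Y_k$ match, the transversality inequality $q_0/p_0<-\alpha_1'/\alpha_1$ holds, the orientations are globally coherent so that $\iota^*d\lambda_0>0$ throughout $\mathrm{int}(\Sigma)$, the resulting surface is connected, and the intersection number with a regular fiber comes out to be exactly $-bp_0/(\eul(Y)\alpha_1)$ --- in particular a positive integer. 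Keeping careful track of the non-uniquely-determined dual pairs $(\alpha_j',\beta_j')$ and of the orientation conventions through these gluings is where the argument demands attention rather than new ideas. A minor additional point is to arrange the collars of $\partial\Sigma$ to be \emph{literally} $(p_0,q_0)$-local surfaces of section rather than merely isotopic to them, which is done by a further Moser isotopy near $\gamma_1$, exactly as in the normalisation $\lambda_0|_{Y_1}=\lambda_0'$.
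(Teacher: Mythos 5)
Your strategy matches the paper's: decompose $Y$ along the Seifert tori, fill $Y_1$ with $b$ disjoint $(p_0,q_0)$-local surfaces of section around $\gamma_1$, fill each remaining solid torus $Y_j$ with transverse disks, realize a multisection over the regular part of the base, and glue the pieces along the boundary tori. You also correctly state the slope relations that the gluing forces (e.g.\ $\alpha/b = -\alpha_1 q_0 - \alpha_1'p_0 = -p_0/(\eul(Y)\alpha_1)$). The gap is that you reduce the problem to solving a slope-matching system and then simply assert it can be solved --- yet exhibiting a solution that simultaneously satisfies every constraint you enumerate (integrality of $\alpha/\alpha_j$, the transversality inequality, coherent orientation, connectedness of $\Sigma$, and the intersection-count formula) is precisely the content of the paper's proof, not a routine afterthought.

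The missing idea is the explicit arithmetic. The paper sets
\begin{gather*}
\alpha := \lcm(\alpha_2,\dots,\alpha_k), \qquad
\beta := \Big(\tfrac{\beta_2}{\alpha_2}+\cdots+\tfrac{\beta_k}{\alpha_k}\Big)\alpha,
\\
p := \beta_1\alpha + \alpha_1\beta = -\eul(Y)\,\alpha_1\alpha, \qquad
q := -\beta_1'\alpha - \alpha_1'\beta,
\end{gather*}
and then $b := \gcd(p,q)$, $p_0 := p/b$, $q_0 := q/b$. The lcm is what makes each $\alpha/\alpha_j$ an integer for $j\geq 2$, so that the disk pieces over the other singular fibers can actually be laid down; the negativity of $\eul(Y)$ makes $p>0$ and, via the inverted relation $-\alpha=\alpha_1 q + \alpha_1' p$, gives the transversality inequality $q_0/p_0<-\alpha_1'/\alpha_1$; and connectedness of the glued surface (which you defer entirely) is shown by verifying $\gcd\big(\alpha,\tfrac{\beta_2\alpha}{\alpha_2},\dots,\tfrac{\beta_k\alpha}{\alpha_k}\big)=1$, a fact that uses both $\gcd(\alpha_i,\beta_i)=1$ and, again, $\alpha=\lcm(\alpha_2,\dots,\alpha_k)$. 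Without these specific choices you have not established that your linear system admits a solution with $b>0$ and $\alpha\in\N$, nor that the resulting $\Sigma$ is connected, so the argument as written is a correct plan rather than a proof.
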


\begin{figure}
\begin{center}
\begin{tiny}
\includegraphics{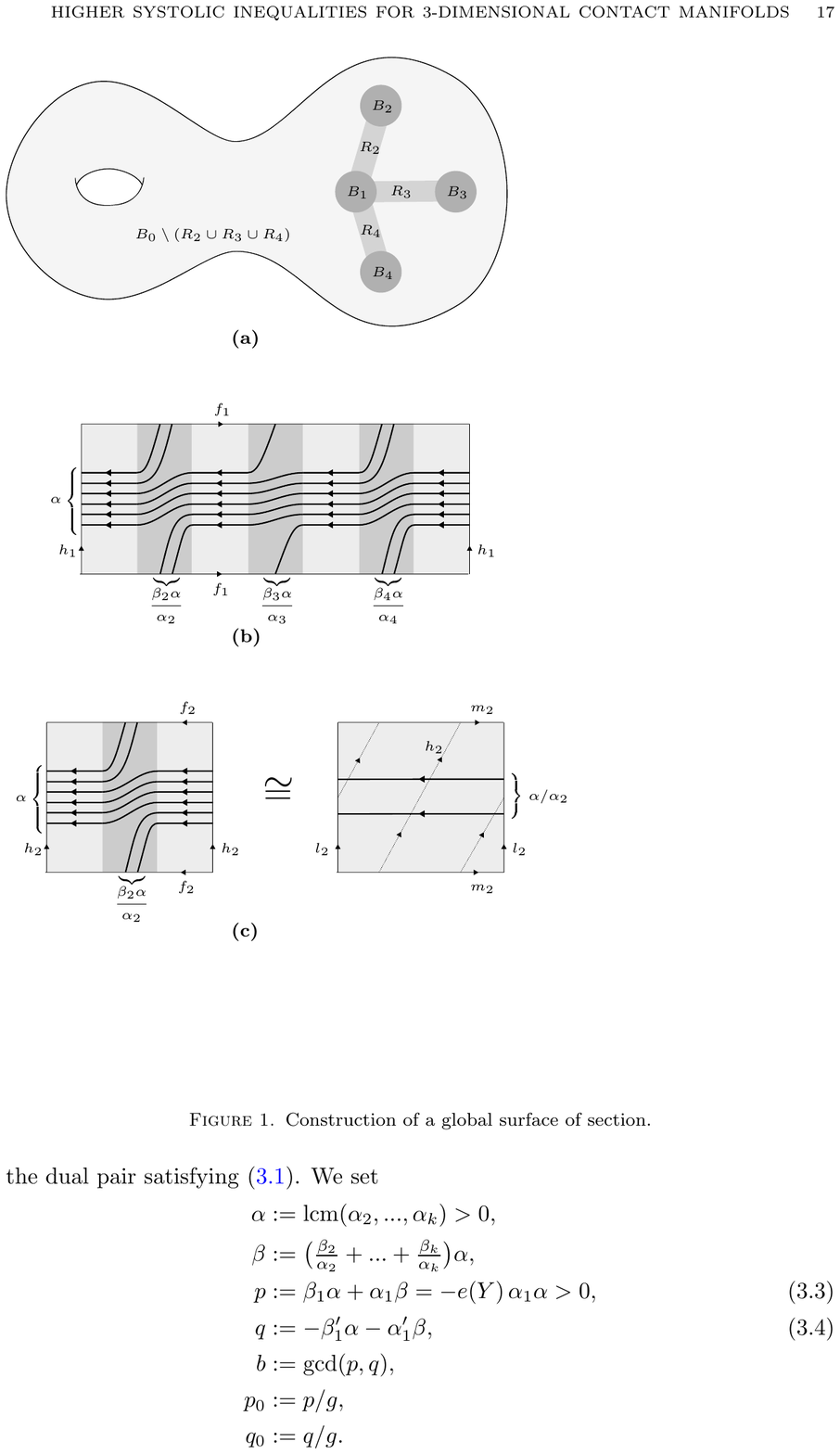}
\end{tiny}
\begin{small}
\caption{Construction of a global surface of section.}
\label{f:sos}
\end{small}
\end{center}
\end{figure}

\begin{proof}
Let $(\alpha_1,\beta_1),....,(\alpha_k,\beta_k)$ be the Seifert invariants of $(Y,\lambda_0)$. Here, we assume without loss of generality that $\gamma_1$ is the Reeb orbit corresponding to the Seifert pair $(\alpha_1,\beta_1)$ (as we already pointed out, $\gamma_1$ is allowed to be a regular orbit, and in that case we have $(\alpha_1,\beta_1)=(1,0)$). For every Seifert pair $(\alpha_j,\beta_j)$, we denote by $(\alpha_j',\beta_j')$ the dual pair satisfying~\eqref{e:dual_Seifert_pair}. We set 
\begin{align}
\nonumber
\alpha & :=\lcm(\alpha_2,...,\alpha_k)>0,\\
\nonumber
\beta & :=\big(\tfrac{\beta_2}{\alpha_2}+...+\tfrac{\beta_k}{\alpha_k}\big)\alpha,\\
\label{e:p}
p & :=\beta_1\alpha + \alpha_1\beta = -\eul(Y)\,\alpha_1\alpha>0,\\
\label{e:q}
q & := -\beta_1'\alpha - \alpha_1'\beta,\\
\nonumber
b & := \gcd(p,q),\\
\nonumber
p_0& :=p/b,\\
\nonumber
q_0& :=q/b.
\end{align}
By inverting the linear Equations~\eqref{e:p} and~\eqref{e:q}, we have 
\begin{align*}
- \alpha & = \alpha_1q +\alpha_1' p, \\
 \beta & = \beta_1 q + \beta_1' p. 
\end{align*}
Notice that~\eqref{e:transversality} is satisfied, for
\begin{align*}
\frac{q_0}{p_0}+\frac{\alpha_1'}{\alpha_1}
=
\frac{q}{p}+\frac{\alpha_1'}{\alpha_1}
=
\frac{\alpha_1 q + \alpha_1' p}{\alpha_1 p}
=
-\frac{\alpha}{\alpha_1 p}<0
\end{align*}
Moreover, 
\begin{align*}
\alpha=- \frac{b\, p_0}{\eul(Y)\, \alpha_1}.
\end{align*}

With the notation of Section~\ref{s:surgery}, we consider the small disks $B_i\subset B$ containing $\pi\circ\gamma_i$ in their interior. We connect $B_1$ with every other $B_i$ by means of a rectangle $R_i$ as shown in Figure~\ref{f:sos}(a). We first define the intersection of our desired surface of section with the solid torus $Y_1=\pi^{-1}(B_1)$ as $b$-many $(p_0,q_0)$-local surfaces of section with boundary on $\gamma_1$; of course, away from their boundary on $\gamma_1$, such local surfaces of section are disjoint. We shall extend these $b$-many components outside $Y_1$ in such a way to create a (connected) global surface of section.

On the torus $\pi^{-1}(\partial_1 B_0)=\pi^{-1}(\partial B_1)$, our defined surface of section winds $- \alpha$-times around $f_1$ and $ \beta$-times around $h_1$. We distribute the windings around $h_1$ into $k-1$ groups as in the example of Figure~\ref{f:sos}(b), where the shaded regions are the annuli $\pi^{-1}(\partial_1 B_0 \cap R_i)$, $i=2,...,k$. We set $B_0':=B_0\setminus (R_2\cup...\cup R_k)$, and extend the surface of section to $\pi^{-1}(B_0')\cong B_0'\times S^1$ as $\alpha$ distinct constant sections 
\[B_0'\times\{t_1\},..., B_0'\times\{t_{\alpha}\};\] 
On $\pi^{-1}(R_i)$, we extend the section in $\pi^{-1}(\partial_1 B_0 \cap R_i)$ by taking the product with an interval.  As a result, in each torus $\pi^{-1}(\partial B_i)$, $i=2,...,k$, our surface of section winds $\alpha$  times around $f_i$, and $\alpha\beta_i/\alpha_i$  times around $h_i$ (Figure~\ref{f:sos}(c)); this means that the surface of section winds $\alpha/\alpha_i$ times around $m_i$, and zero times around $l_i$. Therefore, we can extend it to $\pi^{-1}(B_i)\cong B_i\times S^1$ as $\alpha/\alpha_i$ many distinct constant sections 
\[B_i\times\{t_{i,1}\},...,B_i\times\{t_{i,\alpha/\alpha_i}\}.\]

It remains to show that the resulting surface of section is connected. For each $i=2,...,k$, in $\pi^{-1}(\partial B_i)$ as in the left of Figure~\ref{f:sos}(c) outside the shaded region, the $j$-th horizontal line is connected with the $(j+ \beta_i\alpha/\alpha_i)$-th one modulo $\alpha$. Hence, in order the prove the claim, it suffices to remark that 
\[
\gcd\big( \alpha, \tfrac{\beta_2 \alpha}{\alpha_2},..., \tfrac{\beta_k \alpha}{\alpha_k}\big) = 1.
\]
This latter equality follows from the fact that $\alpha_i$ and $\beta_i$ are coprime and $ \alpha=\lcm(\alpha_2,...,\alpha_k)$. 
\end{proof}

Now, we consider the global surface of section $\iota:\Sigma\to Y$ given by Theorem~\ref{t:sos_Besse}, and the differential forms 
\begin{align*}
\nu_0:=\iota^*\lambda_0,
\qquad
\omega_0:=\diff\nu_0.
\end{align*}
Since the Reeb vector field $R_{\lambda_0}$ is transverse to $\iota(\mathrm{int}(\Sigma))$, we readily infer that $\omega_0$ is symplectic on $\mathrm{int}(\Sigma)$. A simple computation shows that $\omega_0$ vanishes at all points of $\partial\Sigma$, and indeed satisfies Assumption~\ref{assumption} from Section~\ref{ss:fixed_point_thm}. This is a consequence of the fact that our surface of section restricts to a $(p_0,q_0)$-local surface of section near any connected component $C$ of $\partial\Sigma$ (Theorem~\ref{t:sos_Besse}(i)). Indeed, consider a collar neighborhood $A_C\subset \Sigma$ of $C$, and the solid cylinder neighborhood $Y_1\subset Y$ of $\gamma_1$. Under suitable identifications 
\[A_C\equiv [0,\rho)\times S^1,\qquad Y_1\equiv B_1\times S^1,\]
the restriction $\iota|_{A_C}:A_C\to Y$ has the form
\begin{align*}
\iota|_{A_C}(r,s)=\big( r e^{2\pi q_0 s i},p_0s \big),
\end{align*}
and $\omega_0|_{A_C}$ can be written as
\begin{align}
\label{e:omega_0_annulus}
\omega_0|_{A_C}
 &=
 \underbrace{\tfrac{4\pi}{\alpha_1} \big( q_0+\tfrac{\alpha_1'}{\alpha_1}p_0\big)}_{(*)} r\,\diff r\wedge\diff s.
\end{align}
The constant $(*)$ is strictly negative, and up to rescaling in the $r$ variable, it can be replaced by $-1$, as in Assumption~\ref{assumption}.

The global surface of section $\iota:\Sigma\to Y$ induces a surjective map
\begin{align*}
 \tilde \iota:\Sigma\times S^1\to Y,\qquad \tilde\iota(z,t)=\phi_{\lambda_0}^t(\iota(z)),
\end{align*}
which restricts to an $\alpha$-th fold branched covering map  
\begin{align}
\label{e:alpha_covering_map} 
\tilde \iota|_{\mathrm{int}(\Sigma)\times S^1}:\mathrm{int}(\Sigma)\times S^1\to Y\setminus\gamma_1.
\end{align} 
Here,
\begin{align*}
 \alpha=- \frac{b\, p_0}{\eul(Y)\, \alpha_1}>0
\end{align*}
is the number of intersections of any regular Reeb orbit in $Y\setminus\gamma_1$ with $\iota(\mathrm{int}(\Sigma))$, according to Theorem~\ref{t:sos_Besse}(ii). The branch set has the form $\Sigma_{\mathrm{sing}}\times S^1$, where $\Sigma_{\mathrm{sing}}$ is the finite set of points in $\mathrm{int}(\Sigma)$ which are mapped by $\iota$ to points on singular orbits. The map $\tilde\iota$ is a local diffeomorphism also at the branch points.

We denote by $\pr_1:\Sigma\times S^1\to\Sigma$ the projection onto the first factor.
The one-form 
\begin{align}
\label{e:tilde_lambda_0}
\tilde\lambda_0 
:= 
\tilde\iota^*\lambda_0=\diff t+\pr_1^*\nu_0
\end{align}
is a contact form on the interior $\mathrm{int}(\Sigma)\times S^1$, with associated Reeb vector field 
\[R_{\tilde\lambda_0}=\partial_t.\] 
Notice in particular that $R_{\tilde\lambda_0}$ is well-defined and smooth up to the boundary $\partial \Sigma\times S^1$ as well.

The global surface of section allows to compute the volume of our Besse contact manifold. This was pointed out by Geiges \cite{Geiges:2020aa}, and we provide the details here for the reader's convenience.
\begin{lem} 
\label{l:volume_Besse}
If $(Y,\lambda_0)$ is a Besse contact 3-manifold whose closed Reeb orbits have minimal common period 1, its volume is equal to the negative of its Euler number. More precisely, if $\iota: \Sigma \rightarrow Y$ is a global surface of section as above, we have:
\begin{align*}
 \mathrm{vol}(Y,\lambda_0) = \frac{1}{\alpha} \mathrm{area}(\Sigma,\omega_0) = -\eul(Y).
\end{align*}
\end{lem}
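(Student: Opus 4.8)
The strategy is to pull back the contact volume form $\lambda_0\wedge\diff\lambda_0$ under the branched covering $\tilde\iota:\Sigma\times S^1\to Y$ and evaluate the resulting integral over $\Sigma\times S^1$, then identify $\area(\Sigma,\omega_0)$ using Stokes. First I would compute $\tilde\iota^*(\lambda_0\wedge\diff\lambda_0)$. By \eqref{e:tilde_lambda_0}, $\tilde\iota^*\lambda_0=\diff t+\pr_1^*\nu_0$, so $\tilde\iota^*\diff\lambda_0=\pr_1^*\omega_0$; since $\pr_1^*\nu_0\wedge\pr_1^*\omega_0=\pr_1^*(\nu_0\wedge\omega_0)$ is the pullback of a $3$-form on the surface $\Sigma$, it vanishes, and therefore $\tilde\iota^*(\lambda_0\wedge\diff\lambda_0)=\diff t\wedge\pr_1^*\omega_0$. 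Orienting $\Sigma\times S^1$ by this top form, which is the product orientation of $(\Sigma,\omega_0)$ and $(S^1,\diff t)$, Fubini and $\int_{S^1}\diff t=1$ give $\int_{\Sigma\times S^1}\diff t\wedge\pr_1^*\omega_0=\area(\Sigma,\omega_0)$.

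Next I would invoke the covering property: $\tilde\iota$ restricts to the $\alpha$-fold branched covering \eqref{e:alpha_covering_map} over $Y\setminus\gamma_1$, is a local diffeomorphism everywhere, and maps $\partial\Sigma\times S^1$ onto $\gamma_1$. Removing the measure-zero branch locus $\Sigma_{\mathrm{sing}}\times S^1$ and the measure-zero set $\gamma_1$ (together with its preimage), the change-of-variables formula yields $\int_{\Sigma\times S^1}\tilde\iota^*(\lambda_0\wedge\diff\lambda_0)=\alpha\int_Y\lambda_0\wedge\diff\lambda_0=\alpha\,\vol(Y,\lambda_0)$. Comparing with the previous paragraph gives the first claimed equality $\vol(Y,\lambda_0)=\tfrac1\alpha\area(\Sigma,\omega_0)$.

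Finally I would compute $\area(\Sigma,\omega_0)$ directly. Since $\omega_0=\diff\nu_0$, Stokes' theorem gives $\area(\Sigma,\omega_0)=\int_\Sigma\diff\nu_0=\int_{\partial\Sigma}\iota^*\lambda_0$. By Theorem~\ref{t:sos_Besse}(iii) and the positivity of the boundary orientation in Theorem~\ref{t:sos_Besse}(i), $\iota|_{\partial\Sigma}$ is an orientation-preserving covering of $\gamma_1$ of degree $b\,p_0$, where $\gamma_1$ is oriented by the Reeb flow; and since $\gamma_1$ is a closed Reeb orbit of minimal period $1/\alpha_1$ we have $\int_{\gamma_1}\lambda_0=\int_0^{1/\alpha_1}\lambda_0(R_{\lambda_0})\,\diff t=1/\alpha_1$. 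Hence $\area(\Sigma,\omega_0)=b\,p_0/\alpha_1$. Now Theorem~\ref{t:sos_Besse}(ii) asserts $\alpha=-b\,p_0/(\eul(Y)\,\alpha_1)$, i.e. $-\alpha\,\eul(Y)=b\,p_0/\alpha_1=\area(\Sigma,\omega_0)$, whence $\tfrac1\alpha\area(\Sigma,\omega_0)=-\eul(Y)$, which is the second equality. All the computations are elementary; the only point requiring care is the bookkeeping of orientations, so that $\tilde\iota$ is an orientation-preserving branched cover and $\iota|_{\partial\Sigma}$ is orientation-preserving for the Reeb orientation of $\gamma_1$, and the verification that the $\alpha$-fold branched cover contributes the clean factor $\alpha$ in the change of variables, which is exactly why it matters that $\tilde\iota$ is a local diffeomorphism even at the branch points.
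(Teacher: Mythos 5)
Your proof is correct and follows essentially the same route as the paper: pull back $\lambda_0\wedge\diff\lambda_0$ via $\tilde\iota$ to $\diff t\wedge\pr_1^*\omega_0$, use the $\alpha$-fold branched covering to relate $\int_{\Sigma\times S^1}$ to $\alpha\,\vol(Y,\lambda_0)$, and then apply Stokes on $\Sigma$ together with Theorem~\ref{t:sos_Besse}(i)--(iii) to evaluate $\area(\Sigma,\omega_0)=b\,p_0/\alpha_1=-\alpha\,\eul(Y)$. The extra remarks you include (that $\pr_1^*(\nu_0\wedge\omega_0)$ vanishes as the pullback of a $3$-form on a surface, and the orientation bookkeeping for the branched cover) are exactly the implicit justifications the paper leaves to the reader.
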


\begin{proof}
The contact volume form $\lambda_0\wedge\diff\lambda_0$ is pulled back to
\begin{align*}
 \tilde\iota^*(\lambda_0\wedge\lambda_0)
 =
 \tilde\lambda_0\wedge\diff\tilde\lambda_0
 =
 \diff t\wedge\pr_1^*\diff\nu_0.
\end{align*}
We recall that $\tilde\iota$ restricts to an $\alpha$-th fold branched covering map \eqref{e:alpha_covering_map}. Therefore
\begin{align*}
\alpha\,\vol(Y,\lambda_0)
= 
\alpha
\int_{Y}\lambda_0\wedge\diff\lambda_0
=
\int_{\Sigma\times S^1} \!\!\!\tilde\iota^*(\lambda_0\wedge\diff\lambda_0)
=
\int_{\Sigma\times S^1} \!\!\! \diff t\wedge \pr_1^*\omega_0
=\int_\Sigma \omega_0,
\end{align*}
and the latter term is precisely $\area(\Sigma,\omega_0)$. Moreover, since $\iota|_{\partial\Sigma}$ is an orientation preserving $b\,p_0$-th fold covering map of the $1/\alpha_1$-periodic Reeb orbit $\gamma_1$, Stokes' theorem implies
\[
\int_\Sigma \omega_0 
=\int_{\partial\Sigma} \nu_0
=
b\,p_0\int_{\gamma_1} \lambda_0
=
\frac{b\,p_0}{\alpha_1}
=
-\alpha\, \eul(Y)
.
\qedhere
\]
\end{proof}

\subsection{From Besse to nearly Besse contact forms}\label{s:nearly_Besse}
Once the existence of a global surface of section for Besse contact 3-manifolds established, the construction of global surfaces of section for nearly-Besse contact 3-manifolds will be a generalization of the one of nearly-Zoll contact 3-manifolds provided in \cite{Abbondandolo:2018fb,Benedetti:2021aa}. We work out the details in this section.

We consider the global surface of section $\iota:\Sigma\to Y$ of the Besse contact manifold $(Y,\lambda_0)$ with boundary on $\gamma_1$, and its related objects from the previous subsection.
Let $\lambda$ be a contact form on $Y$ such that
\begin{align}
\label{e:same_Reeb_on_gamma_1}
 R_{\lambda}|_{\gamma_1}=R_{\lambda_0}|_{\gamma_1}
\end{align}
We set
\[
\tilde{\lambda} := \tilde\iota^* \lambda, \quad \nu := \iota^* \lambda, \quad \omega := d\nu = \iota^* d\lambda.
\]
We recall that the analogous differential forms for the Besse contact form $\lambda_0$ are denoted by $\tilde\lambda_0$, $\nu_0$, and $\omega_0$. Since $\tilde\iota$ is a local diffeomorphism on $\mathrm{int}(\Sigma) \times S^1$, $\tilde{\lambda}$ is a contact form on this open manifold. 

The next lemma is analogous to \cite[Prop.~3.6]{Abbondandolo:2018fb} and \cite[Prop.~3.10]{Benedetti:2021aa}.

\begin{lem}\label{l:nearly_Besse_normalized}
$ $\nopagebreak
\begin{itemize}

\item[$(i)$] The pull-back of $\nu$ via the inclusion $\partial\Sigma\hookrightarrow\Sigma$ satisfies
\[\nu|_{\partial\Sigma}=\nu_0|_{\partial\Sigma}.\]

\item[$(ii)$] The 2-form $\omega$ vanishes at all points of $\partial\Sigma$, i.e.
\begin{align*}
\omega_z= 0,\qquad \forall z\in \partial\Sigma.
\end{align*}

\item[$(iii)$] The Reeb vector field $R_{\tilde\lambda}$, a priori only defined on the interior of $\Sigma\times S^1$, admits a smooth extension to $\Sigma\times S^1$ that is tangent to the boundary $\partial \Sigma\times S^1$.

\item[$(iv)$] For all $\epsilon>0$ there exists $\delta>0$ such that, if the above contact form $\lambda$ further satisfies $\|R_\lambda-R_{\lambda_0}\|_{C^2}<\delta$, then $\|R_{\tilde\lambda}-\partial_t\|_{C^1}<\epsilon$. Here, the norms are associated with arbitrary fixed Riemannian metrics.
\end{itemize}
\end{lem}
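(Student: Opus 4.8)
The plan is to exploit the explicit model of the surface of section near the boundary and to compare $\lambda$ with $\lambda_0$ only there, where assumption \eqref{e:same_Reeb_on_gamma_1} is available; away from $\partial\Sigma$ everything is governed by the transversality of the Reeb vector field and is stable. For $(i)$, recall that the collar neighborhoods $A_C\cong[0,\rho)\times S^1$ are, after the identification used in Theorem~\ref{t:sos_Besse}(i), $(p_0,q_0)$-local surfaces of section inside the model solid torus $Y_1\equiv B_1\times S^1$, with $\iota|_{A_C}(r,s)=(re^{2\pi q_0 s i},p_0 s)$. On $C=\{0\}\times S^1$ this map covers $\gamma_1$ with degree $p_0$, so $\nu|_C=\iota^*\lambda|_C$ depends only on the restriction of $\lambda$ to $\gamma_1$ and on the $1$-jet of $\iota$ along $C$; since \eqref{e:same_Reeb_on_gamma_1} forces $\lambda|_{\gamma_1}=\lambda_0|_{\gamma_1}$ (the Reeb vector field determines the contact form along a curve tangent to it, as $\lambda(R_\lambda)=1$ and the tangent direction is fixed), we get $\nu|_C=\nu_0|_C$, proving $(i)$.

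For $(ii)$, the key point is that $\omega=d\nu$ is the pull-back of $d\lambda$, and $d\lambda$ restricted to the $2$-plane field along $\gamma_1$ is controlled by $R_\lambda$: since $\iota|_C$ is tangent to $\gamma_1$, which is a closed Reeb orbit of $\lambda$ (because $R_\lambda|_{\gamma_1}=R_{\lambda_0}|_{\gamma_1}$ and $\gamma_1$ is a periodic orbit of the latter), and $R_\lambda\,\lrcorner\,d\lambda=0$, the two-form $\iota^*d\lambda$ vanishes at every point of $C$. Concretely, writing $\lambda=\lambda_0+\mu$ near $\gamma_1$ with $\mu$ and $d\mu$ vanishing along $\gamma_1$ — which follows from $(i)$ together with the fact that both contact forms have $\gamma_1$ as a Reeb orbit with the same period $1/\alpha_1$ — one pulls back and uses \eqref{e:omega_0_annulus} to see $\omega|_{A_C}=\big((*)+O(r)\big)r\,dr\wedge ds$, which indeed vanishes on $\{r=0\}$. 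So $\omega_z=0$ for all $z\in\partial\Sigma$.

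For $(iii)$, on $\mathrm{int}(\Sigma)\times S^1$ the Reeb vector field $R_{\tilde\lambda}$ is characterized by $\tilde\lambda(R_{\tilde\lambda})=1$, $R_{\tilde\lambda}\,\lrcorner\,d\tilde\lambda=0$; near $\partial\Sigma\times S^1$ we use the model $\tilde\iota|_{A_C\times S^1}(r,s,t)=\phi_{\lambda_0}^t(re^{2\pi q_0 s i},p_0 s)$ and the explicit form of $\lambda_0'$ on $Y_1$ to write $\tilde\lambda$ explicitly in the coordinates $(r,s,t)$; because $d\tilde\lambda$ degenerates along $r=0$ exactly as $-r\,dr\wedge ds$ (up to the rescaling of Assumption~\ref{assumption}), solving the defining linear system for $R_{\tilde\lambda}$ gives coefficients that are smooth up to $r=0$ and whose $\partial_r$-component is $O(r)$, hence $R_{\tilde\lambda}$ extends smoothly and is tangent to $\{r=0\}\times S^1=\partial\Sigma\times S^1$. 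This is where the bulk of the computation lives; the main obstacle is bookkeeping the degeneration of $d\tilde\lambda$ at the boundary and checking that the inverse of the relevant $2\times2$ block stays bounded, but this is exactly the content of the analogous computations in \cite{Abbondandolo:2018fb,Benedetti:2021aa} and of the passage from \eqref{e:omega_0_annulus} to Assumption~\ref{assumption}, so it goes through with the same method. Finally, $(iv)$ is a continuity/compactness statement: $R_{\tilde\lambda}$ depends on $\tilde\lambda=\tilde\iota^*\lambda$, hence on the $1$-jet of $R_\lambda$ along the image of $\tilde\iota$, in a way which is continuous in the $C^1$-topology on the compact set $\Sigma\times S^1$ — including up to the boundary, thanks to the smooth extension from $(iii)$ and the explicit boundary model — and when $\lambda=\lambda_0$ one has $R_{\tilde\lambda_0}=\partial_t$ by \eqref{e:tilde_lambda_0}; therefore $\|R_\lambda-R_{\lambda_0}\|_{C^2}<\delta$ with $\delta$ small enough yields $\|R_{\tilde\lambda}-\partial_t\|_{C^1}<\epsilon$.
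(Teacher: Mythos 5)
Your proofs of (i) and (ii) are correct. For (i), the chain of implications is the same as the paper's: $\iota|_C$ covers $\gamma_1$, the tangent direction to $C$ is mapped into the Reeb direction, and $\lambda(R_\lambda)=\lambda_0(R_{\lambda_0})=1$ plus $R_\lambda|_{\gamma_1}=R_{\lambda_0}|_{\gamma_1}$ give $\nu|_C=\nu_0|_C$. (Be careful with the parenthetical "the Reeb vector field determines the contact form along a curve tangent to it": it determines only the component of the contact form in the tangent direction, which is all that is used.) For (ii), your abstract argument — that $d\iota(T_z\Sigma)$ is a $2$-plane containing the Reeb direction, and any $2$-form whose kernel contains one of a pair of spanning vectors vanishes on that pair — is clean and actually slightly more conceptual than the paper's coordinate computation, but it amounts to the same observation.

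For (iii) and (iv), however, there is a genuine gap. You correctly identify that the computation lives in the collar model and that the issue is the degeneration of $d\tilde\lambda$ at $r=0$, but you do not carry it out and instead assert that it is "exactly the content of the analogous computations in [ABHS18, BK21]." Those references treat the Zoll case (trivial $(p_0,q_0)$ model); in the Besse case the local model near the boundary involves the nontrivial Seifert data $(\alpha_1,\alpha_1',p_0,q_0)$, so this needs to be verified, not cited. The crucial cancellation you need to exhibit is the following: writing $R_\lambda|_{Y_1}=F_1\partial_x+F_2\partial_y+F_3\partial_s$, the hypothesis $R_\lambda|_{\gamma_1}=R_{\lambda_0}|_{\gamma_1}$ forces $F_1$ and $F_2$ to vanish along $\gamma_1$, hence $G_j:=(F_j\circ\tilde\iota)/r$ extend smoothly to $r=0$ by the Hadamard lemma; it is these $G_j$ (not the $F_j$) that appear in the $\partial_s$- and $\partial_t$-components of the pushed-forward vector field $R_{\tilde\lambda}=f_1\partial_r+f_2\partial_s+f_3\partial_t$, and that smooth extension, together with $f_1|_{r=0}\equiv 0$, is what proves (iii). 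The same division by $r$ is also the reason (iv) loses a derivative: to control $G_1,G_2$ in $C^1$ you need $F_1,F_2$ in $C^2$, which is why the hypothesis in (iv) is $\|R_\lambda-R_{\lambda_0}\|_{C^2}<\delta$ rather than a $C^1$ bound. Your argument glosses over both the origin of the cancellation and the resulting loss of regularity, so as written (iii) and (iv) are a plan, not a proof.
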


\begin{proof}
Let us consider a collar neighborhood $A_C\times S^1$ of a connected component $C\times S^1$ of $\partial\Sigma \times S^1$, and the solid cylinder neighborhood $Y_1\subset Y$ of $\gamma_1$. With the identifications $A_C\equiv[0,\rho)\times S^1$, $Y_1\equiv B_1\times S^1$, and $\gamma_1\equiv\{0\}\times S^1$, as in the previous subsection, the restriction of the map $\tilde\iota$ can be written as
\begin{align*}
\tilde\iota|_{A_C\times S^1}:A_C\times S^1\to Y_1\equiv B_1\times S^1,
\qquad
\tilde\iota(r,s,t)=\big( r e^{2\pi(q_0s-\alpha_1't)i} , p_0s+\alpha_1t \big).
\end{align*}
and $\tilde\lambda_0|_{A_C\times S^1}$ can be written as
\begin{align}
\label{e:pull_back_lambda_0}
\tilde\lambda_0|_{A_C\times S^1}
:= 
\diff t
+
\Big(\tfrac{p_0}{\alpha_1} +\tfrac{2\pi}{\alpha_1} \big( q_0+\tfrac{\alpha_1'}{\alpha_1}p_0 \big) r^2\Big)\diff s.
\end{align}
The computations 
\begin{align*}
 \tilde\iota_*\partial_t|_{C\times S^1} =R_{\lambda_0}=R_\lambda,
 \qquad
 \tilde\iota_*\partial_s|_{C\times S^1} = p_0\partial_s = \tfrac{p_0}{\alpha_1} R_{\lambda_0}= \tfrac{p_0}{\alpha_1} R_\lambda,
\end{align*}
imply 
\begin{align*}
 \tilde\lambda(\partial_t)|_{C\times S^1}=\tilde\lambda_0(\partial_t)|_{C\times S^1} \equiv 1,
 \qquad
 \tilde\lambda(\partial_s)|_{C\times S^1} = \tilde\lambda_0(\partial_s)|_{C\times S^1} \equiv \tfrac{p_0}{\alpha_1},
\end{align*}
and
\begin{align*}
(\partial_t\,\lrcorner\,\diff\tilde\lambda)_z = \tilde\iota^*(R_{\lambda_0}\,\lrcorner\,\diff\lambda)_z &= \tilde\iota^*(R_\lambda\,\lrcorner\,\diff\lambda)_z \equiv 0,\\
(\partial_s\,\lrcorner\,\diff\tilde\lambda)_z 
= 
\tfrac{p_0}{\alpha_1}\tilde\iota^*(R_{\lambda_0}\,\lrcorner\,\diff\lambda)_z 
&= 
\tfrac{p_0}{\alpha_1}\tilde\iota^*(R_\lambda\,\lrcorner\,\diff\lambda)_z \equiv 0,\\
&\qquad\qquad\forall z\in C\times S^1.
\end{align*}
These identities, together with $\nu_0=\tilde\lambda_0|_{\Sigma\times\{0\}}$ and $\nu=\tilde\lambda|_{\Sigma\times\{0\}}$, readily imply points (i) and (ii).

As for point (iii), let us write $R_\lambda|_{Y_1}$ in coordinates $(x+iy,s)=(re^{i\theta},s)$ as
\begin{align*}
R_\lambda|_{Y_1}
&=  F_1\partial_x + F_2\partial_y + \partial_s\\
&= (F_1\cos(\theta)+F_2\sin(\theta))\,\partial_r + \tfrac1r(F_2\cos(\theta) - F_1\sin(\theta)) \partial_\theta + F_3\,\partial_s,
\end{align*}
for some smooth functions $F_j:Y_1\to\R$. Since $R_\lambda$ and $R_{\lambda_0}$ coincide along the closed Reeb orbit $\gamma_1$, we have
\begin{align*}
 F_1(0)=F_2(0)=0,\qquad F_3(0)=\alpha_1.
\end{align*}
Since
\begin{align*}
\tilde\iota_*\partial_r=\partial_r,
\qquad
\tilde\iota_*\partial_s=2\pi q_0\,\partial_\theta + p_0\,\partial_s,
\qquad
\iota_*\partial_t= -2\pi\alpha_1'\,\partial_\theta + \alpha_1\,\partial_s,
\end{align*}
the Reeb vector fiels $R_{\tilde\lambda}$ on the interior $\mathrm{int}(A_C)\times S^1$ is given by
\begin{align*}
 R_{\tilde\lambda}=f_1\,\partial_r + f_2\,\partial_s + f_3\,\partial_t,
\end{align*}
where, if we set 
\begin{align*}
 \theta(s,t) :=q_0s-\alpha_1't,\qquad G_j(r,s,t):=\frac{F_j\circ\iota(r,s,t)}{r},\ \  j=1,2,
\end{align*}
the functions $f_j:\mathrm{int}(A_C)\times S^1\to\R$ are given by
\begin{align}
\label{e:f1}
 f_1 & = \cos(\theta)F_1\circ\tilde\iota+\sin(\theta)F_2\circ\tilde\iota,\\
\label{e:f2} 
f_2 & = \big( 2\pi\big(q_0+\tfrac{\alpha_1'}{\alpha_1}p_0\big)\big)^{-1} \left( -\sin(\theta)G_1 +\cos(\theta)G_2   + 2\pi\tfrac{\alpha_1'}{\alpha_1} F_3\circ\tilde\iota\right),\\
\label{e:f3}
 f_3& = \tfrac1{\alpha_1}\big(F_3\circ\tilde\iota-p_0\,f_2\big).
\end{align}
Since $F_1\circ\tilde\iota|_{C\times S^1}=F_2\circ\tilde\iota|_{C\times S^1}\equiv0$, the functions $G_1$ and $G_2$ extend smoothly to the whole $A_C\times S^1$, and so do the functions $f_1,f_2,f_3$. Moreover, $f_1|_{C\times S^1}\equiv0$. This proves that $R_{\tilde\lambda}$ extends smoothly to a vector field on $\Sigma\times S^1$ that is tangent to the boundary $\partial\Sigma\times S^1$.

Finally, assume that $R_\lambda$ is $C^2$-close to $R_{\lambda_0}$. Away from any fixed neighborhood of $\partial\Sigma\times S^1$, $R_{\tilde\lambda}$ is $C^2$-close to $R_{\tilde\lambda_0}=\partial_t$. On $Y_1$, since $R_\lambda=-2\pi\alpha_1'\partial_\theta+\alpha_1\partial_s$, the functions
\begin{align*}
F_1-2\pi\alpha_1' r\sin(\theta),\qquad
F_2+2\pi\alpha_1' r\cos(\theta),\qquad
F_3-\alpha_1
\end{align*}
are $C^2$-small and vanish at $\partial\Sigma\times S^1$. Therefore, the function \[-\sin(\theta)G_1+\cos(\theta)G_2+2\pi\tfrac{\alpha_1'}{\alpha_1} F_3\circ\tilde\iota\] is $C^1$-small. Equation \eqref{e:f1} implies that $f_1$ is $C^2$-small, and Equations \eqref{e:f2} and \eqref{e:f3} imply that $f_2$ and $f_3-1$ are $C^1$-small. Overall, we conclude that $R_{\tilde\lambda}$ is $C^1$-close to $R_{\tilde\lambda_0}=\partial_t$.
\end{proof}

Besides~\eqref{e:same_Reeb_on_gamma_1}, we now assume:
\begin{ass}
\label{a:transversality}
The vector field $R_{\tilde\lambda}$ on $\Sigma\times S^1$ is transverse to $\Sigma\times \{s\}$ and oriented as $\partial_s$, for every $s\in S^1$.\hfill\qed
\end{ass}

Thanks to Lemma \ref{l:nearly_Besse_normalized}(iv), Assumption~\ref{a:transversality} is implied by the $C^2$-closeness of $R_{\lambda}$ to $R_{\lambda_0}$. By Assumption~\ref{a:transversality}, the first-return time
\begin{align}
\label{e:first_return_time}
 \tau:\Sigma\to(0,\infty),
 \qquad
 \tau(z):=\min\big\{t>0\ \big|\ \phi_{\tilde\lambda}^t(z,0)\in\Sigma\times\{0\}\big\},
\end{align}
is a well-defined smooth map, and the first-return map
\begin{align}
\label{e:first_return_map}
 \phi:\Sigma\to \Sigma,
 \qquad
 (\phi(z),0)=\phi_{\tilde\lambda}^{\tau(z)}(z,0),
\end{align}
is a diffeomorphism. The diffeomorphism $\phi$ is isotopic to the identity through the isotopy $\{\phi_s\}$ defined by $\phi_0:= \mathrm{id}$ and, for $s\in (0,1]$,
\begin{align}
\label{isotopy}
(\phi_s(z),s)=\phi_{\tilde\lambda}^{\tau_s(z)}(z,0),
\end{align}
where
\[
\tau_s(z):=\min\big\{t>0\ \big|\ \phi_{\tilde\lambda}^t(z,0)\in\Sigma\times\{s\}\big\}.
\]
In the particular case $\lambda=\lambda_0$, we would get that $\tau$ is identically equal to 1 and $\phi_s$ equals the identity on $\Sigma$ for every $s\in [0,1]$. 

The next lemma relates the volume of $(Y,\lambda)$ to the integral of the first return time.

\begin{lem}
\label{l:volume}
The contact volume of $(Y,\lambda)$ is given by
\begin{align*}
\vol(Y,\lambda) = \frac1\alpha \int_\Sigma \tau\,\omega.
\end{align*}
\end{lem}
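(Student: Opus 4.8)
The plan is to reduce the statement to the computation already performed in the proof of Lemma~\ref{l:volume_Besse}. First I would pull the contact volume form of $(Y,\lambda)$ back to $\Sigma\times S^1$ through $\tilde\iota$. Since $\tilde\iota$ restricts to an orientation-preserving $\alpha$-fold branched covering onto $Y\setminus\gamma_1$ with branch locus the negligible set $\Sigma_{\mathrm{sing}}\times S^1$ (see \eqref{e:alpha_covering_map}), and $\gamma_1$ is negligible in $Y$, the argument of Lemma~\ref{l:volume_Besse} gives
\[
\alpha\,\vol(Y,\lambda)=\int_{\Sigma\times S^1}\tilde\iota^*(\lambda\wedge\diff\lambda)=\int_{\Sigma\times S^1}\tilde\lambda\wedge\diff\tilde\lambda ,
\]
where $\lambda$ is understood to induce the same orientation as $\lambda_0$, as is automatic in the perturbative regime in which Assumption~\ref{a:transversality} is granted. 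The new feature compared with Lemma~\ref{l:volume_Besse} is that $\tilde\lambda$ is no longer the product contact form $\tilde\lambda_0$, so I must evaluate the right-hand side using the first-return structure of $R_{\tilde\lambda}$; the Besse case $\lambda=\lambda_0$, where $\tau\equiv1$, is then recovered as a special case.

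Next I would introduce the suspension map built from the isotopy \eqref{isotopy},
\[
\Psi:\Sigma\times[0,1]\to\Sigma\times S^1,\qquad
\Psi(z,s):=\phi_{\tilde\lambda}^{\tau_s(z)}(z,0)=(\phi_s(z),s).
\]
By Assumption~\ref{a:transversality}---which, via Lemma~\ref{l:nearly_Besse_normalized}(iv), holds once $R_\lambda$ is sufficiently $C^2$-close to $R_{\lambda_0}$---the $S^1$-coordinate strictly increases along the orbits of $R_{\tilde\lambda}$, so $\tau_s(z)$ is smooth in $(z,s)$, and smooth up to $\partial\Sigma\times[0,1]$ by Lemma~\ref{l:nearly_Besse_normalized}(iii). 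Moreover $\Psi$ restricts to a diffeomorphism from $\mathrm{int}(\Sigma)\times[0,1)$ onto $\mathrm{int}(\Sigma)\times S^1$, with inverse given by flowing a point of $\mathrm{int}(\Sigma)\times S^1$ backwards to the first hit of $\mathrm{int}(\Sigma)\times\{0\}$; it is orientation-preserving, since it is the identity on $\Sigma\times\{0\}$, it maps $\partial_s$ to the positive multiple $(\partial_s\tau_s)\,R_{\tilde\lambda}$ of the Reeb vector field, and $(\tilde\lambda\wedge\diff\tilde\lambda)(\cdot,\cdot,R_{\tilde\lambda})=\diff\tilde\lambda$ restricts on $\Sigma\times\{0\}$ to the positive area form $\omega$. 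As $\Sigma\times\{1\}$ and $\partial\Sigma\times S^1$ are negligible, the change of variables gives $\int_{\Sigma\times S^1}\tilde\lambda\wedge\diff\tilde\lambda=\int_{\Sigma\times[0,1]}\Psi^*(\tilde\lambda\wedge\diff\tilde\lambda)$.

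The heart of the argument is then the computation of $\Psi^*(\tilde\lambda\wedge\diff\tilde\lambda)$. Set $T(z,s):=\tau_s(z)$ and let $\pr:\Sigma\times[0,1]\to\Sigma$ be the projection. I would first establish
\[
\Psi^*\tilde\lambda=\pr^*\nu+\diff T .
\]
Indeed, $\Psi^*\tilde\lambda(\partial_s)=\tilde\lambda\bigl((\partial_s\tau_s)R_{\tilde\lambda}\bigr)=\partial_s\tau_s=\diff T(\partial_s)$; and for $v\in T_z\Sigma$, writing $\Psi_*v=\diff\phi_{\tilde\lambda}^{\tau_s(z)}[(v,0)]+\bigl(D_v\tau_s\bigr)R_{\tilde\lambda}$ and using that $\phi_{\tilde\lambda}^t$ preserves $\tilde\lambda$ and that $\tilde\lambda$ restricts to $\nu$ on $\Sigma\times\{0\}$, one gets $\Psi^*\tilde\lambda(v)=\nu(v)+D_v\tau_s=\nu(v)+\diff T(v)$. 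Applying $\diff$ yields $\Psi^*\diff\tilde\lambda=\pr^*\diff\nu=\pr^*\omega$, whence
\[
\Psi^*(\tilde\lambda\wedge\diff\tilde\lambda)=(\pr^*\nu+\diff T)\wedge\pr^*\omega=\diff T\wedge\pr^*\omega=\diff\bigl(T\,\pr^*\omega\bigr),
\]
because $\pr^*\nu\wedge\pr^*\omega=\pr^*(\nu\wedge\omega)=0$ for dimensional reasons and $\pr^*\omega$ is closed.

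Finally I would apply Stokes' theorem on $\Sigma\times[0,1]$. Of the three pieces of the boundary, the lateral part $\partial\Sigma\times[0,1]$ contributes nothing since $\pr^*\omega$ vanishes there, $\omega$ being zero on $\partial\Sigma$ by Lemma~\ref{l:nearly_Besse_normalized}(ii); the bottom $\Sigma\times\{0\}$ contributes nothing since $T|_{s=0}=\tau_0\equiv0$; and the top $\Sigma\times\{1\}$, carrying the orientation of $\Sigma$, contributes $\int_\Sigma\tau\,\omega$ since $T|_{s=1}=\tau_1=\tau$. Hence $\int_{\Sigma\times[0,1]}\Psi^*(\tilde\lambda\wedge\diff\tilde\lambda)=\int_\Sigma\tau\,\omega$, and combining with the first displayed identity gives $\vol(Y,\lambda)=\tfrac1\alpha\int_\Sigma\tau\,\omega$. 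I expect the only genuinely delicate point to be the verification that both $\tilde\iota$ and $\Psi$ are orientation-preserving diffeomorphisms away from measure-zero sets, so that the two changes of variables are legitimate; the remainder is Stokes' theorem together with the vanishing of $\omega$ on $\partial\Sigma$.
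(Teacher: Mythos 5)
Your argument is correct and rests on the same underlying idea as the paper's proof: pull the volume form $\lambda\wedge\diff\lambda$ back through the branched covering $\tilde\iota$ to $\Sigma\times S^1$, and then reparametrize the suspension over $\Sigma\times[0,1]$ by flowing from $\Sigma\times\{0\}$. The two small differences are cosmetic. First, the paper uses the linear time reparametrization $j(z,s)=\phi_{\tilde\lambda}^{s\tau(z)}(z,0)$, so that the suspension coordinate is $T(z,s)=s\tau(z)$ and the pulled-back volume form is literally $\tau\,ds\wedge\mathrm{pr}_1^*\omega$; you use instead the level-set parametrization $\Psi(z,s)=\phi_{\tilde\lambda}^{\tau_s(z)}(z,0)=(\phi_s(z),s)$, with suspension coordinate $T(z,s)=\tau_s(z)$, so that the pulled-back form is the less explicit $\diff T\wedge\mathrm{pr}^*\omega$. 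Second, this choice forces you to finish with Stokes on $\Sigma\times[0,1]$ (using the vanishing of $\omega$ on $\partial\Sigma$ from Lemma~\ref{l:nearly_Besse_normalized}(ii) and the extension $\tau_0\equiv0$), whereas the paper's product form integrates immediately by Fubini without touching the boundary. Both routes are equally legitimate; the paper's is marginally shorter, while yours has the pedagogical virtue of exhibiting the role of the boundary behaviour of $\omega$ and of matching the notation of the isotopy~\eqref{isotopy} that reappears in the proof of Lemma~\ref{l:return_time_map}.
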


\begin{proof} 
The bijective map
\[
j: \Sigma \times [0,1) \rightarrow \Sigma \times S^1, \qquad j(z,s) := \phi_{\tilde{\lambda}}^{s\tau(z)} (z,0),
\]
satisfies
\[
j^*(\tilde\lambda \wedge d\tilde\lambda) = \tau\, ds \wedge \mathrm{pr}_1^* \omega,
\]
where $\pr_1:\Sigma\times [0,1) \to\Sigma$ is the projection onto the first factor. Together with the fact that the restriction of $\tilde{\iota}$ to the interior of $\Sigma\times S^1$
is an $\alpha$-th fold branched covering map of a full measure subset of $Y$ pulling the volume form $\lambda\wedge d\lambda$ back to $\tilde\lambda \wedge d\tilde\lambda$, we obtain
\begin{align*}
\alpha\, \vol(Y,\lambda) &= \alpha \int_Y \lambda \wedge d\lambda = \int_{\Sigma\times S^1} \tilde\lambda \wedge d\tilde\lambda = \int_{\Sigma\times [0,1)} j^*( \tilde\lambda \wedge d\tilde\lambda) \\ &= \int_{\Sigma\times [0,1)} \tau\, ds \wedge \mathrm{pr}_1^* \omega = \int_{\Sigma} \tau \, \omega.
\end{align*}
\end{proof}

The next lemma relates the first return map $\phi$ to the first return time $\tau$ via the 1-form $\nu$.

\begin{lem} 
\label{l:return_time_map}
The first-return map $\phi$ is an exact symplectomorphism of $(\Sigma,\omega)$, and more precisely
\begin{align*}
 \phi^*\nu =\nu+\diff\tau.
\end{align*}
The boundary restriction of the first return time $\tau$ is given by
\begin{align*}
 \tau(z)=1+\int_{\{s\mapsto \phi_s(z)\}} \!\!\! \nu,\qquad\forall z\in\partial\Sigma.
\end{align*}
\end{lem}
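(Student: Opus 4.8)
The plan is to realize the first-return map $\phi$ together with the return time $\tau$ inside the contact geometry of $(\Sigma\times S^1,\tilde\lambda)$, exactly as in the Zoll case treated in \cite{Abbondandolo:2018fb,Benedetti:2021aa}, and then to read off both identities from the invariance of $\tilde\lambda$ under its own Reeb flow. Concretely, write $\psi^t:=\phi_{\tilde\lambda}^t$ for the Reeb flow of $\tilde\lambda$ on $\Sigma\times S^1$. Since $\psi^t{}^*\tilde\lambda=\tilde\lambda$ for all $t$, the first return map $\phi:\Sigma\to\Sigma$ (identifying $\Sigma$ with $\Sigma\times\{0\}$ via the first factor) can be written as $\iota_0^{-1}\circ\psi^{\tau}\circ\iota_0$, where $\iota_0:\Sigma\hookrightarrow\Sigma\times S^1$ is the inclusion onto $\Sigma\times\{0\}$ and $\tau$ is the first-return time from \eqref{e:first_return_time}. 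Because $\psi^{\tau}$ maps $\Sigma\times\{0\}$ to itself, and $\tilde\lambda|_{\Sigma\times\{0\}}=\mathrm{pr}_1^*\nu$, the standard computation for a section transverse to a flow preserving a contact form gives $\phi^*\nu-\nu=\diff\tau$: one pulls back the identity $\psi^{\tau}{}^*\tilde\lambda=\tilde\lambda$ along $\iota_0$, uses that $\tilde\lambda(R_{\tilde\lambda})\equiv1$ so that the ``$\diff t$-part'' contributes exactly $\diff\tau$, and that the $\mathrm{pr}_1^*\nu$-part contributes $\phi^*\nu$ on one side and $\nu$ on the other. Differentiating gives $\phi^*\omega=\omega$, so $\phi$ is a symplectomorphism of $(\Sigma,\omega)$; the relation $\phi^*\nu-\nu=\diff\tau$ with $\tau$ a genuine (globally defined, positive) smooth function is precisely the statement that $\phi$ is exact with action (a version of) $\tau$.

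For the boundary formula, I would use the isotopy $\{\phi_s\}_{s\in[0,1]}$ from \eqref{isotopy}, built from the partial return maps $\phi_s=\iota_s^{-1}\circ\psi^{\tau_s}\circ\iota_0$ where $\iota_s:\Sigma\hookrightarrow\Sigma\times\{s\}$. Fix $z\in\partial\Sigma$. The loop $s\mapsto\phi_s(z)$ in $\Sigma$, traversed from $s=0$ to $s=1$, is (under $\tilde\iota$, or just inside $\Sigma\times S^1$) the composition of the Reeb orbit arc $s\mapsto\psi^{s}(z,0)$ — note that along $\partial\Sigma\times S^1$ the Reeb vector field $R_{\tilde\lambda}$ is tangent to the boundary by Lemma~\ref{l:nearly_Besse_normalized}(iii), and its reparametrization by $\tau_s$ traces the boundary torus — run up to time $\tau(z)$, followed by the reverse of the short Reeb arc $s\mapsto\psi^{s}(z,0)$ for $s\in[0,1]$ that closes up the loop back over $\Sigma\times\{0\}$; here one uses $\tau_s(z)\to\tau_0(z)\big|_{s\to 1}=\tau(z)$ and $\tau_0=\tau$, together with the fact that on the boundary the flow stays in $\partial\Sigma\times S^1$ so the two pieces fit together along $\partial\Sigma$. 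Integrating $\nu$ over this loop and using $\tilde\lambda|_{\Sigma\times\{0\}}=\mathrm{pr}_1^*\nu$ together with $\tilde\lambda(R_{\tilde\lambda})\equiv1$: the integral of $\tilde\lambda$ over the long Reeb arc equals its length $\tau(z)$, the integral over the reversed short arc equals $-\int_{\{s\mapsto\phi_s(z)\}}\nu$ up to the contribution of the closing arc, and rearranging yields $\tau(z)=1+\int_{\{s\mapsto\phi_s(z)\}}\nu$. (This is the direct analogue of \cite[Prop.~3.6(iii)]{Abbondandolo:2018fb} and \cite[Prop.~3.10]{Benedetti:2021aa}, and indeed is consistent with Remark~\ref{whenflux0}: it says $a_{\tilde\phi,\nu}=\tau-1$ is normalized as in \eqref{norm2}, which is why $\tilde\phi\in\widetilde{\Ham}_0(\Sigma,\omega)$ once the flux vanishes.)

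The one genuinely delicate point — and the place where this statement goes beyond a purely interior computation — is the behavior at $\partial\Sigma$: a priori $\phi$ and $\tau$ are defined via the return map only on $\mathrm{int}(\Sigma)$, and one must know that they extend smoothly to $\Sigma$ and that the extension still satisfies $\phi^*\nu-\nu=\diff\tau$ and the boundary identity. This extension is exactly what Lemma~\ref{l:nearly_Besse_normalized}(iii) secures: $R_{\tilde\lambda}$ extends smoothly to $\Sigma\times S^1$ tangent to $\partial\Sigma\times S^1$, so its flow $\psi^t$ is defined and smooth on all of $\Sigma\times S^1$, Assumption~\ref{a:transversality} holds up to the boundary, hence $\tau$ and $\phi$ in \eqref{e:first_return_time}–\eqref{e:first_return_map} are smooth on $\Sigma$; the identities, being equalities of smooth objects that hold on the dense open set $\mathrm{int}(\Sigma)$, then hold on $\Sigma$ by continuity. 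I expect the main obstacle in writing this up cleanly to be bookkeeping the orientations and the $S^1=\R/\Z$ normalization so that the ``$+1$'' (rather than $+\tau_{k_0}(\lambda_0)$ or some multiple of $p_0/\alpha_1$) comes out correctly — this is where the precise normalization $\tilde\lambda_0=\diff t+\mathrm{pr}_1^*\nu_0$ of \eqref{e:tilde_lambda_0}, giving $R_{\tilde\lambda_0}=\partial_t$ with $\tau\equiv1$ when $\lambda=\lambda_0$, must be matched against the general $\tilde\lambda$, and where the tangency of $R_{\tilde\lambda}$ along $\partial\Sigma\times S^1$ is used to guarantee that the closing-up arc in the boundary computation genuinely lies over $\partial\Sigma$.
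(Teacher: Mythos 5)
The first identity $\phi^*\nu=\nu+\diff\tau$ is handled in essentially the same way as the paper: pulling back $(\phi^{\tau}_{\tilde\lambda})^*\tilde\lambda=\tilde\lambda$ to $\Sigma\times\{0\}$ and using $\tilde\lambda(R_{\tilde\lambda})\equiv1$. That part of your proposal is correct, and it is also the same route the paper takes.

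For the boundary identity, your argument is the right basic idea but is muddled, and it omits the single step that actually produces the ``$+1$''. Two concrete issues. First, for $z\in\partial\Sigma$ the curve $s\mapsto\phi_s(z)$ is a \emph{path} from $z$ to $\phi(z)$, not a loop, and the proposed decomposition into ``the long Reeb arc, followed by the reverse of the short Reeb arc $s\mapsto\psi^s(z,0)$'' does not close up: $\psi^1(z,0)$ is neither $(z,0)$ nor $(\phi(z),0)$ in general, so the concatenation is not a loop and does not yield the claimed bookkeeping. What the paper actually considers is the single curve $\zeta_z(s)=(\phi_s(z),s)=\phi_{\tilde\lambda}^{\tau_s(z)}(z,0)$ in $\partial\Sigma\times S^1$, which is a reparametrization of the Reeb arc of $(z,0)$ over $[0,\tau(z)]$; there is no concatenation of arcs at all. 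Second, and more seriously, your computation never invokes the equality
\[
\tilde\lambda\big|_{T(\partial\Sigma\times S^1)}=\tilde\lambda_0\big|_{T(\partial\Sigma\times S^1)},
\]
which is a consequence of the hypothesis $R_\lambda|_{\gamma_1}=R_{\lambda_0}|_{\gamma_1}$ and is established by the computations $\tilde\lambda(\partial_t)=\tilde\lambda_0(\partial_t)\equiv1$ and $\tilde\lambda(\partial_s)=\tilde\lambda_0(\partial_s)\equiv p_0/\alpha_1$ in the proof of Lemma~\ref{l:nearly_Besse_normalized}. This is what lets one pass from $\int_{\zeta_z}\tilde\lambda=\tau(z)$ to $\int_{\zeta_z}\tilde\lambda_0$ and then use the explicit splitting $\tilde\lambda_0=\diff t+\pr_1^*\nu_0$ (together with $\nu_0|_{\partial\Sigma}=\nu|_{\partial\Sigma}$, which is Lemma~\ref{l:nearly_Besse_normalized}(i)) to get $\tau(z)=1+\int_{\{s\mapsto\phi_s(z)\}}\nu$. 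You flag at the end that ``the precise normalization $\tilde\lambda_0=\diff t+\pr_1^*\nu_0$ must be matched against the general $\tilde\lambda$,'' which correctly identifies the obstacle, but you do not resolve it; the relations $\tilde\lambda|_{\Sigma\times\{0\}}=\pr_1^*\nu$ and $\tilde\lambda(R_{\tilde\lambda})\equiv1$ that you do invoke are not sufficient, since they say nothing about the $\diff t$-component of $\tilde\lambda$ in the direction $\partial_t$ along the boundary torus. Without the boundary agreement of $\tilde\lambda$ and $\tilde\lambda_0$ (or, equivalently, the facts that $\tilde\lambda(\partial_t)\equiv1$ and $\diff\tilde\lambda\equiv0$ on $\partial\Sigma\times S^1$), the ``$+1$'' does not follow.

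A minor remark: you present the smooth extension of $\phi$ and $\tau$ to $\partial\Sigma$ as the delicate point, but in the paper's setup this is already built in, since Lemma~\ref{l:nearly_Besse_normalized}(iii) and Assumption~\ref{a:transversality} make the first-return time and map in \eqref{e:first_return_time}--\eqref{e:first_return_map} smooth on all of $\Sigma$ from the outset; no ``continuity from the interior'' argument is needed.
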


\begin{proof}
The first statement follows by the well-known computation
\begin{align*}
 \phi^*\nu = (\phi_{\tilde\lambda}^{\tau(z)})^*\tilde\lambda|_{\Sigma\times\{0\}} + \phi^*(\tilde\lambda(R_{\tilde\lambda}))\diff\tau=\nu+\diff\tau.
\end{align*}
For each $z\in\partial\Sigma$, the curve
\[
\zeta_z:[0,1]\to \partial\Sigma\times S^1,\qquad \zeta_z(s):=(\phi_s(z),s) = \phi_{\tilde{\lambda}}^{\tau_s(z)} (z,0),
\] 
is a reparametrization of the restriction of the orbit of $(z,0)$ by the Reeb flow of $\tilde{\lambda}$ to the interval $[0,\tau(z)]$, which makes one full turn around the second factor of $\partial\Sigma\times S^1$. Therefore,
\begin{align*}
\tau(z)
=\int_{\zeta_z} \tilde\lambda
=\int_{\zeta_z} \tilde\lambda_0
=\int_{\zeta_z} \!\!\big( \diff s + \pr_1^*\nu \big)
=\int_{\zeta_z} \diff s + \int_{\pr_1\circ\zeta_z} \!\!\!\nu
=1 + \int_{\{s\mapsto \phi_s(z)\}}\!\!\! \nu,
\end{align*}
where the second equality follows by Lemma~\ref{l:nearly_Besse_normalized}(i), and the third one from~\eqref{e:tilde_lambda_0}.
\end{proof}

In order to prove Theorem~\ref{t:main}, we will need to apply the fixed point Theorem~\ref{t:fixed_point}, which concerns symplectomorphisms that are $C^1$-close to the identity on a surface $\Sigma$ equipped with a fixed 2-form symplectic in the interior and vanishing in a suitable way at the boundary. By Lemma \ref{l:return_time_map}, the diffeomorphism $\phi: \Sigma \rightarrow \Sigma$ is symplectic with respect to the 2-form $\omega=\iota^* d\lambda$, which varies with $\lambda$. However, assumption~\eqref{e:same_Reeb_on_gamma_1} and its consequence $\nu|_{\partial \Sigma} = \nu_0|_{\partial \Sigma}$ from Lemma~\ref{l:nearly_Besse_normalized}(i) imply that
\[
\mathrm{area}(\Sigma,\omega) = \int_{\partial \Sigma} \nu = \int_{\partial \Sigma} \nu_0 = \mathrm{area}(\Sigma,\omega_0).
\]
Therefore, we can conjugate $\phi$ by a diffeomorphism $\kappa: \Sigma\rightarrow \Sigma$ pulling $\omega$ back to $\omega_0$ and obtain a symplectomorphism with respect to the fixed 2-form $\omega_0$ on $\Sigma$. The construction of this diffeomorphism and the proof of its further properties are based as usual on Moser's trick but require a bit of care, since we are working on a surface with boundary. We work out the details in the following lemma, which is a variation of \cite[Prop.~3.9]{Benedetti:2021aa}.

\begin{lem} 
\label{l:pullback_to_nu0}
If $\lambda$ is $C^2$-close enough to $\lambda_0$, then there exists a diffeomorphism $\kappa:\Sigma\to\Sigma$ such that $\kappa|_{\partial\Sigma}=\id$ and $\kappa^*\omega=\omega_0$. Moreover, $\kappa$ $C^1$-converges to the identity as $\lambda$ $C^2$-converges to $\lambda_0$.
\end{lem}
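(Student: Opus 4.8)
The plan is to apply a Moser-type deformation argument to the path of two-forms $\omega_u := (1-u)\,\omega_0 + u\,\omega$, $u\in[0,1]$, which interpolates between $\omega_0$ and $\omega$. First I would observe that since $\lambda$ is $C^2$-close to $\lambda_0$ the two-form $\omega = \iota^*\diff\lambda$ is $C^1$-close to $\omega_0 = \iota^*\diff\lambda_0$; hence each $\omega_u$ is nowhere vanishing on $\mathrm{int}(\Sigma)$ (it is a small perturbation of the symplectic form $\omega_0$), and by Lemma \ref{l:nearly_Besse_normalized}(ii) both $\omega$ and $\omega_0$ vanish at every point of $\partial\Sigma$, so $\omega_u$ vanishes there too. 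Moreover $\omega - \omega_0 = \diff(\nu - \nu_0)$ is exact, and by Lemma \ref{l:nearly_Besse_normalized}(i) the primitive $\sigma := \nu - \nu_0$ satisfies $\sigma|_{\partial\Sigma} = 0$ (as a one-form on $\Sigma$ pulled back to $\partial\Sigma$; in fact, working in the collar coordinates $(r,s)$ of Assumption \ref{assumption}, $\sigma$ has vanishing $\diff s$-component along $\{r=0\}$). Crucially, by the area identity recorded just before the lemma, $\int_{\partial\Sigma}\sigma = \mathrm{area}(\Sigma,\omega) - \mathrm{area}(\Sigma,\omega_0) = 0$ on each component, which is the cohomological obstruction one needs to kill; since $\sigma$ already restricts to $0$ on $\partial\Sigma$ this is automatic here, but it is what makes the flux/boundary bookkeeping work.

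Next I would define, for each $u$, the time-dependent vector field $X_u$ on $\mathrm{int}(\Sigma)$ by $X_u \,\lrcorner\, \omega_u = -\sigma$, and let $\kappa_u$ be its flow starting at $\kappa_0 = \mathrm{id}$; the usual Moser computation $\tfrac{\diff}{\diff u}\kappa_u^*\omega_u = \kappa_u^*(L_{X_u}\omega_u + \tfrac{\diff}{\diff u}\omega_u) = \kappa_u^*(\diff(X_u\lrcorner\,\omega_u) + (\omega-\omega_0)) = \kappa_u^*(-\diff\sigma + \diff\sigma) = 0$ then gives $\kappa_u^*\omega_u = \omega_0$, so $\kappa := \kappa_1$ is the desired diffeomorphism provided the flow is defined for all $u\in[0,1]$ and extends smoothly to $\partial\Sigma$. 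Since $\omega_u$ is $C^1$-small away from $\omega_0$ and $\sigma$ is $C^1$-small, $X_u$ is $C^0$-small on $\mathrm{int}(\Sigma)$; combined with compactness of $\Sigma$ and the fact that $X_u$ is tangent to (indeed vanishes on) $\partial\Sigma$, this yields a well-defined flow on all of $\Sigma$ fixing the boundary pointwise, and $\kappa$ is $C^1$-close to the identity, $C^1$-converging to it as $\lambda \to \lambda_0$ in $C^2$.

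The main obstacle, and where care is genuinely required, is the behavior of $X_u$ near $\partial\Sigma$: because $\omega_u$ \emph{degenerates} on $\partial\Sigma$, the equation $X_u\lrcorner\,\omega_u = -\sigma$ does not a priori have a solution extending smoothly (or even continuously) to the boundary — one is dividing a small one-form by a form that vanishes to order one. The way to handle this is to work in the collar coordinates $A_C \equiv [0,\rho)\times S^1$ of Assumption \ref{assumption}, where $\omega_0 = -r\,\diff r\wedge\diff s$ and, by the explicit local form of the nearly-Besse surface of section (cf.\ \eqref{e:omega_0_annulus} and Lemma \ref{l:nearly_Besse_normalized}), $\omega$ has the shape $-r\,\diff r\wedge\diff s + O(r)$-corrections with matching leading order, so $\omega_u = -r\,\diff r\wedge\diff s\,(1 + O(r\text{-small}))$; and $\sigma = \nu - \nu_0$, having vanishing $\diff s$-component at $r=0$ and being exact with the right normalization, has the form $\sigma = r^2(\dots)\diff s + r(\dots)\diff r$ near the boundary. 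Dividing, one checks that $X_u = r(\dots)\partial_r + (\dots)\partial_s$ with smooth coefficients and vanishing $\partial_r$-component at $r=0$ — exactly the condition guaranteeing a smooth flow tangent to $\partial\Sigma$. I would record this local computation as the technical heart of the proof, noting that it parallels \cite[Prop.~3.9]{Benedetti:2021aa} and the constructions in Section \ref{ss:quasi_autonomous} above (in particular the treatment of $\diff F$ near the boundary in the proof of Theorem \ref{t:quasi_autonomous}); everything else is the standard Moser argument.
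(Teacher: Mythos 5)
Your strategy is the right one (Moser on $\omega_u = (1-u)\omega_0 + u\omega$, then a careful boundary analysis), and it matches the paper in outline, but there is a genuine gap in the technical heart that you yourself flagged as the delicate step. You assert that $\sigma = \nu-\nu_0$ automatically has the local shape $r^2(\dots)\,\diff s + r(\dots)\,\diff r$ in the collar $A_C\equiv[0,\rho)\times S^1$. The only boundary information you actually have from Lemma~\ref{l:nearly_Besse_normalized}(i) is that the pullback of $\nu-\nu_0$ to $\partial\Sigma$ vanishes, i.e.\ the $\diff s$-component of $\sigma$ is $O(r)$; it is \emph{not} $O(r^2)$ a priori, and there is no constraint at all forcing the $\diff r$-component $h_1$ of $\nu_0-\nu$ to vanish at $r=0$ --- it is merely $C^2$-small. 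Your appeal to ``exactness with the right normalization'' does not help: $\sigma$ itself is not closed ($\diff\sigma = \omega-\omega_0 \neq 0$), and the vanishing of $\int_C\sigma$ controls no pointwise component. If $h_1(0,s)\neq 0$, then solving $X_u\lrcorner\,\omega_u = -\sigma$ with $\omega_u \sim -r\,\diff r\wedge\diff s$ gives a $\partial_s$-component of $X_u$ that blows up like $h_1/r$ as $r\to 0$, and the Moser flow cannot extend to $\partial\Sigma$.

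The fix, which is exactly what the paper does, is to change the primitive of $\omega-\omega_0$ in the Moser equation: set $X_u\lrcorner\,\omega_u = \nu_0-\nu + \diff f$ for a function $f$ chosen near the boundary as $f(r,s) = -\int_0^r h_1(x,s)\,\diff x$ (and cut off away from $\partial\Sigma$). This cancels the $\diff r$-component of $\nu_0-\nu$ identically near $r=0$, so that the $\partial_s$-component of $X_u$ is zero there, and the identity $\partial_r(r h_2) - \partial_s h_1 = r(h_3-1)$, coming from $\diff(\nu_0-\nu)=\omega_0-\omega$ written in collar coordinates, then shows that the remaining $\diff s$-component $r h_2 + \partial_s f$ is in fact $O(r^2)$, so the $\partial_r$-component of $X_u$ is $O(r)$ and the flow is well-defined, smooth up to $\partial\Sigma$, tangent to it, and $C^1$-small. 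Without this auxiliary exact correction, the argument does not close, so this correction is not a cosmetic choice but the missing ingredient.
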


\begin{proof}
Note that the smallness of $\|\lambda-\lambda_0\|_{C^2}$ implies the smallness of $\|\nu-\nu_0\|_{C^2}$ and $\|\omega-\omega_0\|_{C^1}$. Assumption~\ref{a:transversality} guarantees that $\omega$ is a symplectic form in the interior of $\Sigma$ inducing the same orientation as $\omega_0$. Therefore, the 2-forms $\omega_t:=t\omega+(1-t)\omega_0$ are symplectic on $\mathrm{int}(\Sigma)$ for every $t\in[0,1]$. They are actually uniformly $C^1$-close to $\omega_0$ when $\|\lambda-\lambda_0\|_{C^2}$ is small. 

We look for an isotopy $\kappa_t:\Sigma\to\Sigma$ such that $\kappa_t|_{\partial\Sigma}\equiv\mathrm{id}$ and $\kappa_t^*\omega_t=\omega_0$. We build the time-dependent vector field $X_t$ realizing such isotopy, i.e.\ $\tfrac{\diff}{\diff t}\kappa_t=X_t\circ\kappa_t$. By differentiating $\kappa_t^*\omega_t$ with respect to $t$, we obtain
\begin{align}
\label{e:X_Moser_0}
0=\tfrac{\diff}{\diff t}\kappa_t^*\omega_t
=
\kappa_t^* \big(  \diff(X\,\lrcorner\,\omega_t) + \omega-\omega_0  \big) 
=
\kappa_t^*  \diff\big(X\,\lrcorner\,\omega_t + \nu-\nu_0 \big)  .
\end{align}
We define $X_t$ on $\mathrm{int}(\Sigma)$ by the equation
\begin{align}
\label{e:X_Moser}
X_t\,\lrcorner\,\omega_t = \nu_0 - \nu + \diff f_t
\end{align}
for a suitable $C^2$-small smooth function $f:\Sigma\times [0,1] \to\R$ to be determined. A suitable choice of $f$ will guarantee that $X_t$ has a smooth extension to the whole $\Sigma$ with $X_t|_{\partial\Sigma}\equiv0$.

For every connected component $C$ of $\partial\Sigma$, we fix a  collar neighborhood $A_C\subset\Sigma$ so that, with the usual suitable identification $[0,\rho)\times S^1$, the differential forms $\omega_0$ can be written as in~\eqref{e:omega_0_annulus}. Actually, up to rescaling the interval $[0,\rho)$, we can even write $\omega_0|_{A_C}$ as 
\begin{align*}
 \omega_0|_{A_C}=-r\,\diff r\wedge\diff s,
\end{align*}
where $r\in[0,\rho)$ and $s\in S^1$. Lemma~\ref{l:nearly_Besse_normalized}(i-ii) implies $\nu|_C=\nu_0|_C$ and $\omega_z=(\omega_0)_z$ for all $z\in C$. Therefore we can write
\begin{align*}
(\nu_0-\nu)|_{A_C} = h_1\,\diff r + r\,h_2\,\diff s,
\qquad
\omega|_{A_C} = -r\, h_3\, \diff r\wedge\diff s,
\end{align*}
for some smooth functions $h_i:A_C\to\R$. If $\nu_0$ and $\nu$ are $C^2$-close, the function $h_1$ is $C^2$-small, while $h_2$ is $C^1$-small. Moreover, since $\omega_0$ and $\omega$ are $C^1$-close, the function $1-h_3(0,\cdot)$ is $C^0$-small. In particular, up to choosing the annulus $A_C$ small enough, $h_3$ is strictly positive on the whole $A_C$. Since $\diff(\nu_0-\nu)=\omega_0-\omega$, we have
\begin{align}
\label{e:d_nu_local_coord}
 \partial_r(r\, h_2)-  \partial_s h_1 = r (h_3 - 1).
\end{align}
The two-form $\omega_t|_{A_C}$ is given by
\begin{align*}
\omega_t|_{A_C}=-r\,(t(h_3-1)+1)\,\diff r\wedge\diff s,
\end{align*}
and if we write the vector field $X_t|_{A_C}$ in $(r,s)$ coordinates as $X_t=R_t\,\partial_r + S_t\,\partial_s$, Equation~\eqref{e:X_Moser} becomes
\begin{align*}
 R_t  = -\frac {r\, h_2 + \partial_s f}{r\,(t(h_3-1)+1)},\qquad
 S_t  = \frac { h_1 + \partial_r f}{r\,(t(h_3-1)+1)}.
\end{align*}
We now choose $f:\Sigma\to\R$ to be a smooth function such that $f|_{\partial\Sigma}\equiv0$ and, on any collar neighborhood $A_C=[0,\rho)\times S^1$ as above, satisfies
\begin{align*}
f(r,s)
=
\left\{
  \begin{array}{@{}ll}
    \displaystyle -\int_0^r h_1(x,s)\,\diff x,    & \mbox{if }r\leq\tfrac13\rho, \vspace{5pt} \\ 
    0, & \mbox{if }r\geq\tfrac23\rho. 
  \end{array}
\right.
\end{align*}
We shall choose such an $f$ so that $\|f\|_{C^2}\leq\const\|h_1\|_{C^2}$, and in particular $f$ is $C^2$-small since $\nu$ and $\nu_0$ are $C^2$-close. With this choice of $f$, we have $S_t(r,s)=0$ if $r\leq\tfrac13\rho$. As for the function $R_t$, for all $r\leq\tfrac13\rho$ Equation~\eqref{e:d_nu_local_coord} implies
\begin{align*}
 R_t(r,s)
 &=
 -\frac {r\, h_2(r,s) + \partial_s f(r,s)}{r\,(t(h_3-1)+1)}
 =
 -\frac {r\, h_2(r,s)  -\int_0^r \partial_s h_1(x,s)\,\diff x}{r\,(t(h_3-1)+1)}\\
 &=
 -\frac {r\, h_2(r,s)  +\int_0^r \big(x (h_3(x,s) - 1) - \partial_x(x\, h_2(x,s))\big)\,\diff x}{r\,(t(h_3-1)+1)}\\
 &=
 -\frac {\tfrac1r\int_0^r x (h_3(x,s) - 1) \,\diff x}{t(h_3-1)+1}.
\end{align*}
We already know that the function $t(h_3-1)+1$ appearing in the denominator of the above equation is nowhere vanishing. As for the numerator, we can rewrite the integral as
\begin{align*}
\int_0^r x (h_3(x,s) - 1) \,\diff x = r^2\, h_4(r,s)
\end{align*}
for some $C^0$-small smooth function $h_4:A_C\to\R$ such that $h_4(0,s)=h_3(0,s) - 1$. Therefore
\begin{align*}
 R_t(r,s) = -\frac {r\,h_4(r,s)}{t\,(h_3-1)+1}.
\end{align*}
From this expression we readily infer that $R_t$ is $C^1$-small, extends smoothly to the whole $\Sigma$, and $R_t|_{\partial\Sigma}\equiv0$. Summing up, we obtained a $C^1$-small smooth vector field $X_t$ on $\Sigma$ satisfying~\eqref{e:X_Moser} and $X|_{\partial\Sigma}\equiv0$. 
Its flow $\kappa_t$ is $C^1$-small for all $t\in[0,1]$, and satisfies $\kappa_t|_{\partial\Sigma}=\id$ and, by~\eqref{e:X_Moser_0}, $\kappa_t^*\omega_t=\omega_0$.
\end{proof}

The following proposition sums up the arguments of this section and will play a crucial role in the proof of Theorem~\ref{t:main}(ii). 

\begin{prop}
\label{p:sum_up}
Let $\lambda_0$ be a Besse contact form on the closed manifold $Y$ whose closed Reeb orbits have minimal common period $1$, and let $\gamma_1$ be any orbit of $R_{\lambda_0}$. Then there exists a closed surface with boundary $\Sigma$ endowed with an exact 2-form $\omega_0$ which is symplectic on the interior of $\Sigma$ and satisfies Assumption~\ref{assumption} such that the following holds. For every $\epsilon>0$ small enough and for every $C^1$-neighborhood $\UU$ of the identity in $\widetilde{\Ham}_0(\Sigma,\omega_0)$ there exist $\delta>0$ and, for each contact form $\lambda$ on $Y$ such that
\[
R_\lambda|_{\gamma_1}=R_{\lambda_0}|_{\gamma_1}, \qquad \|\lambda-\lambda_0\|_{C^2} < \delta, \qquad \|R_{\lambda} - R_{\lambda_0}\|_{C^2} < \delta,
\]
a global surface of section
\[
j:\Sigma\to Y
\] 
for $R_{\lambda}$ mapping each component of $\partial \Sigma$ onto some positive iterate of $\gamma_1$ and an element 
\[
\tilde\psi\in\UU
\]
with the following properties: 
\begin{itemize}

\item[$(i)$] The normalized Calabi invariant of $\tilde\psi$ is related to the volumes of $(Y,\lambda)$ and $(Y,\lambda_0)$ by
\begin{align*}
\widehat{\Cal}(\widetilde\psi) =   \frac{\vol(Y,\lambda)}{\vol(Y,\lambda_0)} - 1.
\end{align*}

\item[$(ii)$] A point $z\in \mathrm{int}(\Sigma)$ is a contractible fixed point of $\tilde\psi$ if and only if \[\gamma_z(t):=\phi_\lambda^t(j(z))\] is a closed Reeb orbit of $R_{\lambda}$ in $Y\setminus\gamma_1$ with (not necessarily minimal) period 
\[
1+a_{\tilde\psi}(z)\in(1-\epsilon,1+\epsilon).
\]
Here, $a_{\tilde\psi}(z)$ is the normalized action of the contractible fixed point $z$.

\item[$(iii)$] The element $\tilde\psi$ is the identity in $\widetilde{\Ham}_0(\Sigma,\omega_0)$ if and only if $(Y,\lambda)$ is Besse and its Reeb orbits have common period 1.
\end{itemize}
\end{prop}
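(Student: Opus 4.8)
The plan is to assemble the pieces constructed in Sections~\ref{s:sos_Besse} and~\ref{s:nearly_Besse} and to transport everything to the fixed model $(\Sigma,\omega_0)$ by conjugation. First I would invoke Theorem~\ref{t:sos_Besse} to obtain the global surface of section $\iota:\Sigma\to Y$ for $(Y,\lambda_0)$ with boundary on $\gamma_1$, together with the integers $b,p_0,q_0,\alpha$; the exact two-form $\omega_0=\diff(\iota^*\lambda_0)$ is symplectic on $\mathrm{int}(\Sigma)$ and, by the computation~\eqref{e:omega_0_annulus} followed by the rescaling in $r$, satisfies Assumption~\ref{assumption}. Now given $\lambda$ with $R_\lambda|_{\gamma_1}=R_{\lambda_0}|_{\gamma_1}$ and $\|R_\lambda-R_{\lambda_0}\|_{C^2}<\delta$, Lemma~\ref{l:nearly_Besse_normalized}(iv) guarantees that, for $\delta$ small, $R_{\tilde\lambda}$ is $C^1$-close to $\partial_t$; in particular Assumption~\ref{a:transversality} holds, so $\iota:\Sigma\to Y$ is still a global surface of section for $R_\lambda$ (globality follows because every orbit stays $C^1$-close to a $\partial_t$-orbit and hence crosses $\Sigma\times\{0\}$ both ways), and the first-return time $\tau$ and first-return map $\phi$ of~\eqref{e:first_return_time}--\eqref{e:first_return_map} are well defined, with $\phi$ $C^1$-close to the identity since $R_{\tilde\lambda}$ is $C^1$-close to $\partial_t$.

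Next I would apply Lemma~\ref{l:pullback_to_nu0} to get a diffeomorphism $\kappa:\Sigma\to\Sigma$ with $\kappa|_{\partial\Sigma}=\id$, $\kappa^*\omega=\omega_0$, and $\kappa$ $C^1$-close to the identity, and set
\[
j:=\iota\circ\kappa,\qquad \psi:=\kappa^{-1}\circ\phi\circ\kappa,\qquad \sigma:=\tau\circ\kappa.
\]
Then $j$ is a global surface of section for $R_\lambda$ whose boundary restriction covers a positive iterate of $\gamma_1$ (property from Theorem~\ref{t:sos_Besse}(iii)), and by Lemma~\ref{l:return_time_map} together with $\kappa^*\nu=\kappa^*\iota^*\lambda$ and $\kappa|_{\partial\Sigma}=\id$ one gets $\psi^*(\kappa^*\nu)=\kappa^*\nu+\diff\sigma$, so $\psi$ is an exact symplectomorphism of $(\Sigma,\omega_0)$ that is $C^1$-close to $\id$. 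By Lemma~\ref{l:return_time_map} the boundary values of $\sigma$ are $\sigma(z)=1+\int_{\{s\mapsto(\kappa^{-1}\phi_s\kappa)(z)\}}\kappa^*\nu$; since on $\partial\Sigma$ one has $\kappa^*\nu=\nu_0$ and the relevant curves are loops in $\partial\Sigma\times S^1$ of zero $\omega_0$-symplectic area swept, the flux of the natural isotopy through boundary-joining curves vanishes, so the isotopy lifts to a well-defined element $\tilde\psi\in\widetilde{\Ham}_0(\Sigma,\omega_0)$; shrinking $\delta$ further places $\tilde\psi$ in the prescribed neighborhood $\UU$. With the primitive $\nu':=\kappa^*\nu$ of $\omega_0$ (which agrees with $\nu_0$ on $\partial\Sigma$), the normalized action is $a_{\tilde\psi,\nu'}=\sigma-1$, whence property (ii): a contractible interior fixed point $z$ of $\tilde\psi$ corresponds, via $j$, to a closed $R_\lambda$-orbit of period $\sigma(z)=1+a_{\tilde\psi}(z)$, lying in $Y\setminus\gamma_1$ because $z\in\mathrm{int}(\Sigma)$ and $\iota(\mathrm{int}(\Sigma))\subset Y\setminus\gamma_1$, and the $C^1$-smallness of $\tilde\psi$ forces $|a_{\tilde\psi}(z)|<\epsilon$; conversely every such orbit meets $j(\mathrm{int}(\Sigma))$ in a point that is a fixed point of $\psi$, contractible because the corresponding loop in $\Sigma\times S^1$ is homotopic into a fiber $\{z\}\times S^1$ which is contractible in $\Sigma\times S^1$, hence its $\pr_1$-projection is contractible in $\Sigma$. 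For (i), Lemma~\ref{l:volume} gives $\vol(Y,\lambda)=\frac1\alpha\int_\Sigma\tau\,\omega=\frac1\alpha\int_\Sigma\sigma\,\omega_0$ and Lemma~\ref{l:volume_Besse} gives $\vol(Y,\lambda_0)=\frac1\alpha\area(\Sigma,\omega_0)$, so
\[
\widehat{\Cal}(\tilde\psi)=\frac{1}{\area(\Sigma,\omega_0)}\int_\Sigma a_{\tilde\psi,\nu'}\,\omega_0=\frac{1}{\area(\Sigma,\omega_0)}\int_\Sigma(\sigma-1)\,\omega_0=\frac{\vol(Y,\lambda)}{\vol(Y,\lambda_0)}-1.
\]

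Finally, for (iii): if $(Y,\lambda)$ is Besse with common Reeb period $1$ then $\tilde\lambda$ has all orbits $1$-periodic and transverse to $\Sigma\times\{s\}$, so $\tau\equiv1$ and $\phi=\id$, hence $\psi=\id$ and, the generating isotopy being trivial, $\tilde\psi=\id$. Conversely, if $\tilde\psi=\id$ then $\psi=\id$, so $\phi=\id$ and thus $\tau$ is constant on each orbit of the return dynamics; evaluating the period along orbits shows every $R_\lambda$-orbit through $\iota(\mathrm{int}(\Sigma))$ is closed with the same period, and by continuity and density this period equals the boundary value, forcing $\tau\equiv1$, so all orbits of $R_\lambda$ in $Y\setminus\gamma_1$ are $1$-periodic; since $\gamma_1$ itself is a closed orbit, $(Y,\lambda)$ is Besse, and a short computation with the return time identifies the common period as $1$. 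The main obstacle I anticipate is the bookkeeping around the lift to $\widetilde{\Ham}_0(\Sigma,\omega_0)$ — verifying that the flux through every boundary-joining curve genuinely vanishes (this is where it matters that $\iota(\partial\Sigma)$ sits over the single orbit $\gamma_1$, so that after identifying $\kappa^*\nu$ with $\nu_0$ on the boundary the relevant boundary integrals coincide and cancel) and that the natural isotopy $\{\kappa^{-1}\phi_s\kappa\}$ is the correct representative making $a_{\tilde\psi,\nu'}=\sigma-1$ hold on the nose rather than up to a constant; the constant is pinned down precisely by the normalization~\eqref{norm2} together with the boundary formula for $\tau$ in Lemma~\ref{l:return_time_map}.
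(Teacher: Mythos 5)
Your proof follows the same overall strategy as the paper's: build the global surface of section via Theorem~\ref{t:sos_Besse}, apply Lemma~\ref{l:nearly_Besse_normalized}(iv) to restore transversality, conjugate by the Moser diffeomorphism $\kappa$ from Lemma~\ref{l:pullback_to_nu0}, and read off the Calabi invariant and action formulas from Lemmas~\ref{l:return_time_map} and~\ref{l:volume}. Two places deserve tightening.

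First, you speak of ``the natural isotopy $\{\kappa^{-1}\phi_s\kappa\}$ lifting to $\widetilde{\Ham}_0(\Sigma,\omega_0)$,'' but the interpolating isotopy~\eqref{isotopy} is \emph{not} through symplectomorphisms (the maps $\phi_s$ send $\Sigma\times\{0\}$ to $\Sigma\times\{s\}$ and have no reason to preserve $\omega_0$), so it does not by itself represent an element of $\widetilde{\Ham}_0$. The paper resolves this by first invoking Theorem~\ref{t:quasi_autonomous}: $\psi$ is an exact symplectomorphism $C^1$-close to the identity, hence Hamiltonian, and the unique lift $\tilde\psi\in\widetilde{\Ham}(\Sigma,\omega_0)$ that is $C^1$-close to the identity exists. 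Only then does one observe that the defining Hamiltonian isotopy $\{\psi_t\}$ is homotopic with fixed endpoints to the non-symplectic isotopy $\{\phi_s\}$, and since the flux and boundary action formulas depend only on the endpoints of the isotopy and the homotopy class of the traced boundary loops, Lemma~\ref{l:return_time_map} transfers to give $\tau=1+a_{\tilde\psi,\nu}$ and the zero-flux condition of Remark~\ref{whenflux0}. You flag this as the main obstacle but stop short of the actual resolution.

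Second, in the converse part of~(ii) you argue contractibility by asserting that $\{z\}\times S^1$ is contractible in $\Sigma\times S^1$; this is false. The correct reason, used in the paper, is simply that $\tilde\psi$ is $C^1$-close to the identity, so the loops $t\mapsto\psi_t(z)$ are uniformly short and hence contractible in $\Sigma$. Both issues are minor and easily repaired; the argument as a whole is the paper's argument.
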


\begin{proof}
We consider a global surface of section
\[
\iota: \Sigma \rightarrow Y
\]
for the Reeb flow of $\lambda_0$ as in Theorem \ref{t:sos_Besse} and the corresponding map
\[
\tilde\iota: \Sigma \times S^1 \rightarrow Y, \qquad \tilde\iota(z,t) := \phi_{\lambda_0}^t(\iota(z)).
\]
The 2-form
\[
\omega_0 := \iota^* d\lambda_0,
\]
is symplectic in the interior of $\Sigma$, satisfies Assumption \ref{assumption} (see the discussion after the proof of Theorem \ref{t:sos_Besse}) and, thanks to Lemma \ref{l:volume_Besse}, has total area
\begin{equation}
\label{areavol}
\mathrm{area}(\Sigma,\omega_0) = \alpha \, \mathrm{vol}(Y,\lambda_0),
\end{equation}
where $\alpha$ is the positive integer appearing in Theorem \ref{t:sos_Besse}. Given another contact form $\lambda$ on $Y$, we set as before
\[
\tilde{\lambda}:= \tilde\iota^*\lambda, \qquad \nu:= \iota^* \lambda, \qquad \omega:= d\nu = \iota^* d\lambda.
\]
Here, we are assuming that $R_\lambda|_{\gamma_1}=R_{\lambda_0}|_{\gamma_1}$, which is exactly condition~\eqref{e:same_Reeb_on_gamma_1}, and that $\|R_{\lambda} - R_{\lambda_0}\|_{C^2}$ is small enough, so that also Assumption~\ref{a:transversality} holds thanks to Lemma \ref{l:nearly_Besse_normalized}(iv). In particular, $\iota$ is also a global surface of section for the Reeb flow of $\lambda$. Moreover, by Lemma \ref{l:nearly_Besse_normalized} the 1-form $\tilde{\lambda}$ defines a flow on $\Sigma\times S^1$ having $\Sigma\times \{0\}$ as global surface of section and we denote by $\tau$ and $\phi$ the corresponding first return time and first return map, see (\ref{e:first_return_time}) and (\ref{e:first_return_map}). By Lemma \ref{l:nearly_Besse_normalized}(iv), the map $\phi$ is $C^1$-close to the identity when $\|R_{\lambda}-R_{\lambda_0}\|_{C^2}$ is small.

By further assuming that $\|\lambda-\lambda_0\|_{C^2}$ is small enough, we can use Lemma \ref{l:pullback_to_nu0} to find a diffeomorphism $\kappa:\Sigma \rightarrow \Sigma$ that is  $C^1$-close to the identity and satisfies $\kappa^* \omega = \omega_0$. Up to conjugating $\phi$ by $\kappa$ and replacing $\iota$ by $j:= \iota\circ \kappa$, which is still a global surface of section for the Reeb flow of $\lambda$, we may assume that $\omega$ equals $\omega_0$.

In this case, $\nu$ is a primitive of $\omega_0$ and the equality
\begin{equation}
\label{prima}
\phi^* \nu - \nu = d\tau
\end{equation}
proved in Lemma \ref{l:return_time_map} shows that $\phi$ is an exact symplectomorphism on $(\Sigma,\omega_0)$. Being $C^1$-close to the identity, $\phi$ is the image under the universal cover
\[
\pi: \widetilde{\mathrm{Ham}}(\Sigma,\omega_0) \rightarrow \mathrm{Ham}(\Sigma,\omega_0)
\]
of a unique $\tilde{\psi}=[\{\psi_t\}]$ which is also $C^1$-close to the identity (see Theorem \ref{t:quasi_autonomous}). Moreover, the $C^1$-closeness to the identity implies that the Hamiltonian isotopy $\{\psi_t\}$ is homotopic with fixed ends to the (non necessarily symplectic) isotopy $\{\phi_t\}$ which is defined in (\ref{isotopy}), and hence Lemma \ref{l:return_time_map} gives us the identity
\begin{equation}
\label{seconda}
\tau(z) = 1 + \int_{\{t\mapsto \psi_t(z)\}} \nu,\qquad\forall z\in\partial\Sigma.
\end{equation}
Identities (\ref{prima}) and (\ref{seconda}) imply that $\tilde{\psi}$ has vanishing flux (see Remark \ref{whenflux0}), so we may assume that it belongs to the $C^1$-neighborhood $\mathcal{U}$ of the identity in $\widetilde{\mathrm{Ham}}_0(\Sigma,\omega_0)$, and give us the following relationship between the function $\tau$ and the normalized action of $\tilde{\psi}$ with respect to the primitive $\nu$ of $\omega_0$:
\begin{equation}
\label{tauaction}
\tau = 1 + a_{\tilde{\psi},\nu}.
\end{equation}
Therefore, (\ref{areavol}) and Lemma \ref{l:volume} imply the identity
\[
\widehat{\Cal}(\tilde{\psi}) = \frac{1}{\mathrm{area}(\Sigma,\omega_0)} \int_{\Sigma} (\tau - 1)\, \omega_0 = \frac{\vol(Y,\lambda)}{\vol(Y,\lambda_0)}-1,
\]
which proves (i). Moreover, if $z\in \mathrm{int}(\Sigma)$ is a contractible fixed point of $\tilde\psi$, then the Reeb orbit
\[
\gamma_z(t):= \phi_{\lambda}^t(j(z))
\]
is different from $\gamma_1$ and closes up at time $\tau(z) = 1 + a_{\tilde{\psi},\nu}(z)$. This number belongs to the interval $(1-\epsilon,1+\epsilon)$ when $\|R_{\lambda} - R_{\lambda_0}\|_{C^2}$ is small enough, again by Lemma \ref{l:nearly_Besse_normalized}(iv). Conversely, if $\epsilon$ is small enough then any closed orbit of $R_{\lambda}$ other than $\gamma_1$ and with (non necessarily minimal) period in the interval $(1-\epsilon,1+\epsilon)$ correesponds to an interior fixed point of $\phi= \pi(\tilde\psi)$. All fixed points of $\phi$ are contractible as fixed points of the lift $\tilde{\psi}$, as this is $C^1$-close to the identity. This proves (ii).

If $\tilde{\psi}$ is the identity, then every orbit of the Reeb flow of $\lambda$ is closed and, since the action $a_{\tilde{\psi},\nu}$ vanishes identically, has (non necessarily minimal) period 1 by (\ref{tauaction}). Therefore, $(Y,\lambda)$ is Besse with orbits having common period 1. Conversely, if $(Y,\lambda)$ has this property then the fact that $\tau$ is close to 1 and the closeness of $\tilde\psi$ to the identity imply that $\tilde{\psi}$ is the identity. This proves (iii).
\end{proof}

\begin{rem}
The above result can be generalized to a more general situation in which the Reeb flows of $\lambda$ and $\lambda_0$ have more closed orbits $\gamma_1, \dots,\gamma_h$ in common and the boundary of $\Sigma$ is mapped onto their union, but with a caveat: If $R_{\lambda}|_{\gamma_i} = c_i R_{\lambda_0}|_{\gamma_i}$ then the flux of the Hamiltonian isotopy defining 
$\tilde{\psi}$ is in general non zero, unless all numbers $c_i$ coincide. 
\end{rem}

\section{Proof of Theorem~\ref{t:main}}
\label{s:proof}

\begin{proof}[Proof of Theorem~\ref{t:main}$(i)$]
Let $\lambda$ be a contact form on $Y$ such that there exists a point $z\in Y$ whose Reeb orbit is open or has minimal period strictly larger than $\tau_k(\lambda)$. The same must be true for all points in a sufficiently small compact neighborhood $U\subset Y$ of $z$. Let $f:Y\to(-\infty,0]$ be a non-positive smooth function supported in $U$ and such that $f(z)<0$. For each $\epsilon>0$ small enough, the contact form $\lambda_\epsilon:=e^{\epsilon f}\lambda$  satisfies $\fix(\phi_{\lambda_\epsilon}^t)=\fix(\phi_{\lambda}^t)$ for all $t\in[0,\tau_k(\lambda)]$. In particular, $\tau_k(\lambda_\epsilon)=\tau_k(\lambda)$. Since 
\begin{align*}
 \vol(Y,\lambda_\epsilon) = \int_Y e^{2\epsilon f} \lambda\wedge\diff\lambda <
 \int_Y  \lambda\wedge\diff\lambda = \vol(Y,\lambda),
\end{align*}
we have that $\rho_k(\lambda_\epsilon)>\rho_k(\lambda)$, and therefore $\lambda$ is not a local maximizer of $\rho_k$. This proves that each local maximizer of $\rho_k$ is a Besse contact form $\lambda_0$ such that $k_0(\lambda_0)\leq k$.

Now, let $\lambda_0$ be a Besse contact form on $Y$ with $k_0:=k_0(\lambda_0)$. It remains to show that $\lambda_0$ is not a local maximizer of $\rho_k$ for any $k> k_0$. Without loss of generality, we can assume that $\tau_{k_0}(\lambda_0)=1$, so that the Reeb flow of $\lambda_0$ defines a locally free $S^1=\R/\Z$-action on $Y$ and
\[
\tau_k(\lambda_0) = \tau_{k_0}(\lambda_0) = 1 \qquad \forall k\geq k_0.
\]
 We denote by $\gamma_1,\dots,\gamma_h$ the singular orbits of $R_{\lambda_0}$ and by $\alpha_1,\dots,\alpha_h$ the integers greater than 1 such that $\gamma_i$ has minimal period $1/\alpha_i$ (if $\lambda_0$ is Zoll, we have $h=0$). Then
\[
k_0 = \alpha_1 + \dots + \alpha_h - h + 1.
\]
We denote by $B:=Y/S^1$ the quotient orbifold and by $\pi:Y\to B$ the quotient projection. We choose a small open disk $D\subset B$ with smooth boundary such that, for all $b\in D$, the preimage $\pi^{-1}(b)$ is a closed Reeb orbit of minimal period 1. We can identify $D$ with the open disk of radius $\rho$ in $\C$ and assume that the restriction of $\lambda_0$ to $\pi^{-1}(D)$ has the form
\begin{equation}
\label{e:form}
\lambda_0 = ds + \frac{r^2}{2}\, d\theta , \qquad \forall (re^{i\theta},s)\in D \times S^1,
\end{equation}
where $r,\theta$ are polar coordinates on $\C$. We now choose a smooth function $h:B \rightarrow \R$ such that
\begin{enumerate}[(i)]
\item $\int_Y h\circ \pi\, \lambda_0 \wedge d\lambda_0=0$;
\item $h$ equals a positive constant $c_+$ on $Y\setminus D$;
\item on $D$, $h$ has the form $h=\chi(r)$ where $\chi:[0,\rho]\rightarrow \R$ is a smooth function such that $-c_-:=\chi(0)<0$, $\chi(\rho)=c_+$ and $\chi'(r)>0$ for every $r\in (0,\rho)$.
\end{enumerate} 
For every $\epsilon>0$, we consider the 1-form
\[
\lambda_{\epsilon}:= (1+\epsilon \,h\circ \pi) \lambda_0,
\]
which is a contact form for $\epsilon$ small enough. By (i), we have
\begin{equation}
\label{e:volume_e}
\vol(Y,\lambda_{\epsilon}) = \int_Y (1+\epsilon \,h\circ \pi)^2 \, \lambda_0 \wedge d\lambda_0 = \vol(Y,\lambda_0) + c\, \epsilon^2,
\end{equation}
where
\[
c := \int_Y (h\circ \pi)^2\, \lambda_0 \wedge d\lambda_0.
\]
Let $\epsilon>0$ be so small that $\lambda_{\epsilon}$ is a contact form. Condition (ii) implies that the set $\pi^{-1}(B\setminus D)$ is invariant under the Reeb flow of $\lambda_{\epsilon}$, and hence the same is true for its complement $\pi^{-1}(D)$. The Reeb orbits of $\lambda_{\epsilon}$ on $\pi^{-1}(B\setminus D)$ are exactly the Reeb orbits of $\lambda_0$ reparametrized in such a way that their period gets multiplied by $1+\epsilon c_+$. In particular, on $\pi^{-1}(B\setminus D)$ the Reeb flow of $\lambda_{\epsilon}$ has exactly
\[
k_0 - 1 =  \alpha_1 + \dots + \alpha_h - h
\]
closed orbits with period strictly less that $1+ \epsilon c_+$:  the iterates $\gamma_i^m$ with $1\leq m \leq \alpha_i-1$.

On $\pi^{-1}(D)$, the Reeb flow of $\lambda_{\epsilon}$ has an orbit of minimal period $1-\epsilon c_-$, which is given by the inverse image by $\pi$ of the center of $D$, and all other orbits are either non-periodic or have a very large minimal period when $\epsilon$ is small. The latter assertion follows from (\ref{e:form}) and (iii), which imply that the Reeb orbits of $\lambda_{\epsilon}$ in $\pi^{-1}(D)$ are lifts of Hamiltonian orbits on $D$ defined by the standard symplectic form $r\, dr\wedge d\theta$ and the radial Hamiltonian $\chi$. These orbits
wind around the circle of radius $r\in (0,\rho)$ with frequency $\frac{\epsilon \chi'(r)}{2\pi r}>0$, which by the mean value theorem has the upper bound
\[
\frac{\epsilon \chi'(r)}{2\pi r} \leq \frac{\epsilon}{2\pi} \max_{r\in [0,\rho]} |\chi''(r)|, \qquad \forall r\in (0,\rho).
\]
If $\epsilon$ is so small that the above upper bound is smaller than $(1+\epsilon c_+)^{-1}$ and
\[
2(1-\epsilon c_-) \geq 1+\epsilon c_+,
\]
we conclude that on $\pi^{-1}(D)$ the Reeb flow of $\lambda_{\epsilon}$ has precisely one closed orbit whose period is strictly less that $1+ \epsilon c_+$. 

Summing up, $\lambda_{\epsilon}$ has $k_0$ many orbits whose period is strictly less that $1+ \epsilon c_+$. Together with the fact that this Reeb flow has infinitely many closed orbits of minimal period $1+\epsilon c_+$, we deduce that
\[
\tau_k(\lambda_{\epsilon}) = 1+\epsilon c_+ \qquad \forall k> k_0,
\]
when $\epsilon>0$ is small enough. By (\ref{e:volume_e}), we conclude that for every $k> k_0$ the $k$-th systolic ratio of $\lambda_{\epsilon}$ has the lower bound
\[
\rho_k(\lambda_{\epsilon}) = \frac{\tau_k(\lambda_{\epsilon})^2}{\vol(Y,\lambda_{\epsilon})} = \frac{(1+\epsilon c_+)^2}{\vol(Y,\lambda_0) + c\, \epsilon^2} \geq \frac{1+2 \epsilon c_+}{\vol(Y,\lambda_0) + c\, \epsilon^2},
\]
which is strictly larger than
\[
\frac{1}{\vol(Y,\lambda_0)} = \rho_k(\lambda_0)
\]
if $\epsilon$ is small enough. This shows that $\lambda_0$ is not a local maximizer of $\rho_k$ in the $C^{\infty}$-topology if $k> k_0$.
\end{proof}

\begin{proof}[Proof of Theorem~\ref{t:main}$(ii)$]
Let $(Y,\lambda_0)$ be a Besse contact 3-manifold. We recall that the positive integer
\[
k_0:=k_0(\lambda_0)
\] 
is the minimal $k$ so that the Reeb orbits of $(Y,\lambda_0)$ have minimal common period $\tau_k(\lambda_0)$. Without loss of generality, up to multiplying $\lambda_0$ with a positive constant, we can assume that
\begin{align*}
\tau_{k_0}(\lambda_0)=1. 
\end{align*}

We first carry out the proof under the assumption that $(Y,\lambda_0)$ is not Zoll, so that $k_0>1$. We denote by $\gamma_1,...,\gamma_h$ the singular Reeb orbits of $(Y,\lambda_0)$, that is, the closed Reeb orbits with minimal period strictly less than $1$. 
We denote by $\alpha_i>1$ the positive integer whose reciprocal $1/\alpha_i$ is the minimal period of $\gamma_i$, 
and by $\gamma_i^m$ the closed Reeb orbit $\gamma_i$ seen as a $m/\alpha_i$-periodic orbit. Therefore, 
\begin{align}
\label{e:k_nonZoll}
k_0 = \alpha_1+...+\alpha_h - h + 1.
\end{align}
It is well known that all the periodic orbits $\gamma_i^m$ with $1\leq m\leq \alpha_i-1$ are non-degenerate, i.e.
\begin{align*}
\ker(\diff\phi_{\lambda_0}^{m/\alpha_i}(\gamma_i(0))-I)=\mathrm{span}\{R_{\lambda_0}(\gamma_i(0))\},\qquad\forall m=1,...,\alpha_i-1.
\end{align*}
We refer the reader to \cite[Section~4.1]{Cristofaro-Gardiner:2020aa} for a proof of this fact. 
Standard results about perturbation of vector fields imply that, for every 
\[
\epsilon\in\big(0,\tfrac1{2\max\{\alpha_1,...,\alpha_h\}}\big),
\]  
there is a $C^3$-neighborhood $\VV$ of $\lambda_0$ such that every $\lambda\in\VV$ satisfies the following properties.

\begin{enumerate}
\item[(i)] $R_{\lambda}$ has pairwise distinct closed orbits $\tilde\gamma_i$, $i=1,\dots,h$, such that $\tilde{\gamma}_i$ has minimal period in $(\tfrac1{\alpha_i}-\epsilon,\tfrac1{\alpha_i}+\epsilon)$ and is $C^2$-close to $\gamma_i$.
\item[(ii)] The family of possibly iterated closed orbits of $R_{\lambda}$ of period less than or equal to $1-\epsilon$ is
\[
 \big\{ \tilde\gamma_i^m\ \big|\ i\in\{1,...,h\},\ m\in\{1,...,\alpha_i-1\} \big\}.
\]
\end{enumerate}
Here, we say that two closed curves $\gamma:\R/p\Z \rightarrow Y$ and $\tilde{\gamma}:\R/\tilde{p} \Z \rightarrow Y$ are $C^2$-close if $\gamma$ and $\tilde{\gamma}$ are $C^2$-close on $[0,\max\{p,\tilde{p}\}]$.

In particular, for any $\lambda\in\VV$, the closed Reeb orbit $\gamma_1$ of $(Y,\lambda_0)$ with minimal period $1/\alpha_1$ is $C^2$-close to some closed Reeb orbit $\tilde\gamma_1$ of $(Y,\lambda)$ with minimal period $T\in(\tfrac1{\alpha_1}-\epsilon,\tfrac1{\alpha_1}+\epsilon)$. Up to multiplying the contact form $\lambda$ with a constant close to $1$, we can assume that $T=1/\alpha_1$, that is, $\gamma_1$ and $\tilde\gamma_1$ have the same minimal period $1/\alpha_1$. By an argument analogous to \cite[Prop.~3.10]{Abbondandolo:2018fb}, there exists a diffeomorphism $\upsilon:Y\to Y$ such that $\upsilon\circ\gamma_1=\tilde\gamma_1$ and the quantities $\|\upsilon^*\lambda - \lambda_0\|_{C^2}$ and
$\|R_{\upsilon^*\lambda}-R_{\lambda_0}\|_{C^2}$ are small. Therefore, up to pulling back $\lambda$ by $\upsilon$, we can assume that
\begin{align*}
R_\lambda|_{\gamma_1}=R_{\lambda_0}|_{\gamma_1},
\end{align*}
that is, $\gamma_1$ is a closed orbit of minimal period $1/\alpha_1$ for both $R_{\lambda}$ and $R_{\lambda_0}$. After this modification, we can assume that $\lambda$ and $R_{\lambda}$ are arbitrarily $C^2$-close to $\lambda_0$ and $R_{\lambda_0}$ respectively, so that the assumptions of Proposition \ref{p:sum_up} are fulfilled.

The contact form $\lambda$ satisfies
\begin{align*}
 \tau_{k_0}(\lambda)\leq \tau_{k_0}(\lambda_0)=1;
\end{align*}
as a consequence of~\eqref{e:k_nonZoll}, of point~(ii) above and of the fact that $\gamma_1^{\alpha_1}$ is an orbit of $R_{\lambda}$ of period 1. If $\vol(Y,\lambda)>\vol(Y,\lambda_0)$, we have $\rho_{k_0}(\lambda)<\rho_{k_0}(\lambda_0)$, and we are done. Therefore, it remains to consider the case in which
\begin{align}
\label{e:vol_lambda_lambda_0}
 \vol(Y,\lambda)\leq\vol(Y,\lambda_0).
\end{align}

We now apply Proposition~\ref{p:sum_up} (using the objects and terminology introduced therein), choosing a $C^1$-neighborhood $\UU\subset\widetilde{\Ham}_0(\Sigma,\omega_0)$ of the identity such that the conclusion of the fixed point Theorem~\ref{t:fixed_point} with $c:=\frac{1}{2}$ holds for all elements of $\UU$. We require $\lambda$ and $R_{\lambda}$ to be sufficiently $C^2$-close to $\lambda_0$ and $R_{\lambda_0}$ respectively, so that the element $\tilde\psi\in\widetilde{\Ham}_0(\Sigma,\omega_0)$ provided by Proposition~\ref{p:sum_up} is contained in $\UU$. By Proposition~\ref{p:sum_up}(i) and~\eqref{e:vol_lambda_lambda_0}, the normalized Calabi invariant of $\tilde\psi$ has the value
\begin{align*}
\widehat{\Cal}(\tilde\psi)=\frac{\vol(Y,\lambda)}{\vol(Y,\lambda_0)} - 1 \leq0.
\end{align*}
By Theorem~\ref{t:fixed_point}, $\tilde\psi$ has a contractible fixed point $z\in \mathrm{int}(\Sigma)$ whose normalized action satisfies
\begin{align}
\label{e:action_upper_bound}
 a_{\tilde{\psi}}(z) + \frac{1}{2} a_{\tilde\psi}(z)^2 
 \leq \frac{1}{2} \, \widehat{\Cal}(\tilde\psi)
 =
 \frac12 
 \left(\frac{\vol(Y,\lambda)}{\vol(Y,\lambda_0)}-1\right).
\end{align}
Moreover, if $\tilde\psi$ is not the identity, then the above inequality is strict.

Assume first that $\tilde\psi$ is not the identity. By Proposition~\ref{p:sum_up}(ii), the contact manifold  $(Y,\lambda)$ has a closed Reeb orbit of period $1+a_{\tilde\psi}(z)\in(1-\epsilon,1+\epsilon)$. By the strict inequality in~\eqref{e:action_upper_bound}, we obtain the desired strict upper bound
\begin{align*}
\rho_{k_0}(\lambda)
&=
\frac{\tau_{k_0}(\lambda)^2}{\vol(Y,\lambda)}
\leq
\frac{(1+a_{\tilde\psi}(z))^2}{\vol(Y,\lambda)}
=
\frac{1+2a_{\tilde\psi}(z)+a_{\tilde\psi}(z)^2}{\vol(Y,\lambda)}\\
&<
\frac{1+\frac{\vol(Y,\lambda)}{\vol(Y,\lambda_0)}-1}{\vol(Y,\lambda)}
= \frac{1}{\vol(Y,\lambda_0)} = \frac{\tau_{k_0}(\lambda_0)^2}{\vol(Y,\lambda_0)}   = \rho_{k_0}(\lambda_0).
\end{align*}

Assume now that $\tilde\psi$ is the identity. By Proposition~\ref{p:sum_up}(iii), $(Y,\lambda)$ is Besse and its Reeb orbits have common period 1. Since the only closed Reeb orbits of $(Y,\lambda)$ with minimal period less than $1$ are $\tilde\gamma_1,...,\tilde\gamma_h$, we infer that $1$ is the minimal common period of the closed Reeb orbits of $(Y,\lambda)$. Every $\tilde\gamma_i$ is $C^2$-close to the corresponding $\gamma_i$ and has minimal period close to the minimal period $1/\alpha_i$ of $\gamma_i$. This, together with the Besse property, implies that $\tilde\gamma_i$ and $\gamma_i$ have the same minimal period (once again, provided $\lambda$ is sufficiently $C^3$-close to $\lambda_0$). We conclude that the $C^2$-close Besse flows of $\lambda$ and $\lambda_0$ have the same common period 1 and there is a period preserving bijection between their singular orbits. Thanks to the local rigidity of Seifert fibrations, we can find a diffeomorphism $\theta:Y\to Y$ such that $\theta^*R_{\lambda}= R_{\lambda_0}$. Deforming $\theta$ by means of a Moser's trick, we can actually ensure  that $\theta^* \lambda=\lambda_0$.

Actually, the existence of a diffeomorphism $\theta:Y\to Y$ with the latter property follows also from a theorem of Cristofaro-Gardiner and the third author, stating that the prime action spectrum determines Besse contact forms on closed 3-manifolds. Here, the prime action spectrum $\sigma_{\mathrm{p}}(\lambda)$ is the set of minimal periods of the Reeb orbits of $\lambda$, and the above discussion implies in particular that $\sigma_{\mathrm{p}}(\lambda) = \sigma_{\mathrm{p}}(\lambda_0)$. According \cite[Theorem~1.5]{Cristofaro-Gardiner:2020aa}, the equality $\sigma_{\mathrm{p}}(\lambda) = \sigma_{\mathrm{p}}(\lambda_0)$ implies the existence of a diffeomorphism $\theta:Y\to Y$ such that $\theta^*\lambda=\lambda_0$, also without assuming $\lambda$ to be close to $\lambda_0$.

It remains to consider the case in which $(Y,\lambda_0)$ is Zoll, for which $k_0=1$. This case  was already treated by Benedetti-Kang \cite{Benedetti:2021aa}, generalizing the result of the first author together with Bramham-Hryniewicz-Salom\~ao \cite{Abbondandolo:2018fb} for the special case $Y=S^3$, but we add some details here for the reader's convenience. The argument provided above in the non-Zoll case goes through in the Zoll case as well, except for the existence of the closed orbit $\tilde\gamma_1$, which now cannot be obtained perturbatively starting from a non-degenerate orbit of $R_{\lambda_0}$ as in (i) above. Since all orbits of $R_{\lambda_0}$ have the same minimal period 1, we choose $\gamma_1$ to be any one of them. This is the orbit we will apply Proposition~\ref{p:sum_up} to. Note that if $\gamma$ is any other orbit of $R_{\lambda_0}$, then we can find a diffeomorphism $\eta_\gamma: Y \rightarrow Y$ such that $\eta_{\gamma}^* \lambda_0 = \lambda_0$ and $\eta_{\gamma}\circ \gamma = \gamma_1$. Moreover, the set of these diffeomorphisms can be chosen to be pre-compact in the $C^k$-topology for every $k\in \N$.

We now consider a perturbation $\lambda$ of $\lambda_0$. If $\lambda-\lambda_0$ is $C^3$-small, then $R_{\lambda}$ admits a closed orbit $\tilde{\gamma}_1$ of period close to 1 which is $C^2$-close to some orbit $\gamma$ of $R_{\lambda_0}$. 
This is a consequence of the fact that the space of $1$-periodic closed Reeb orbits of the Zoll contact form $\lambda_0$ is Morse-Bott non-degenerate (see, e.g., \cite{Weinstein:1973}, \cite{Bottkol:1980} or \cite{Ginzburg:1987lq}). Up to replacing $\lambda$ by $\eta_{\gamma}^* \lambda$, which is still $C^3$-close to $\lambda_0=\eta_{\gamma}^* \lambda_0$, we may assume that $\tilde{\gamma}_1$ is $C^2$-close to $\gamma_1$. The rest of the proof continues as in the non Zoll case.
\end{proof}

\begin{rem}
\label{newrem}
There is a key point in which the proof of Theorem~\ref{t:main}(ii) above differs from the proofs of the local systolic maximality of Zoll contact forms in  \cite{Abbondandolo:2018fb} and \cite{Benedetti:2021aa}. The proofs from these two papers use the weaker version (\ref{e:weaker_fix_point}) of the fixed point Theorem \ref{t:fixed_point}, and in this case it is crucial that the boundary of the global surface of section is given by a closed orbit of $\lambda$ having minimal period. In the Besse case, the same argument would require us to have the boundary of the global surface of section on an orbit $\gamma$ which realizes $\tau_{k_0}(\lambda)$, where $k_0=k_0(\lambda_0)$. This orbit might be close to a singular orbit of $\lambda_0$, and hence be one of the orbits that are considered in assertion (i) of the above proof, but could also be an orbit of minimal period close to 1 bifurcating from the set of regular orbits of $\lambda_0$. In the latter case, finding a global surface of section with boundary on $\gamma$ and first return map $C^1$-close to the identity seems problematic:
we could apply a diffeomorphism bringing this orbit to a fixed regular orbit of $R_{\lambda_0}$, but we cannot hope to have a uniform bound on the $C^k$ norms of this diffeomorphism, because the set of regular orbits of $\lambda_0$ is not compact and $\gamma$ could be very close to some iterate of a singular orbit of $\lambda_0$. This issue is overcome by the more precise fixed point Theorem \ref{t:fixed_point} which we proved here, whose use does not require the boundary periodic orbit to have any minimality property.
\hfill\qed
\end{rem}

\end{document}